\newcommand\blfootnote[1]{%
  \begingroup
  \renewcommand\thefootnote{}\footnote{#1}%
  \addtocounter{footnote}{-1}%
  \endgroup
}
\newcommand*{\rom}[1]{\expandafter\@slowromancap\romannumeral #1@}
\def\Xint#1{\mathchoice
{\XXint\displaystyle\textstyle{#1}}%
{\XXint\textstyle\scriptstyle{#1}}%
{\XXint\scriptstyle\scriptscriptstyle{#1}}%
{\XXint\scriptscriptstyle\scriptscriptstyle{#1}}%
\!\int}
\def\XXint#1#2#3{{\setbox0=\hbox{$#1{#2#3}{\int}$ }
\vcenter{\hbox{$#2#3$ }}\kern-.582\wd0}}
\def\dashint{\Xint-}
\newtheorem*{defin}{Definition}
\newtheorem{thm}{Theorem}[section]
\newtheorem{cor}[thm]{Corollary}
\newtheorem{lem}[thm]{Lemma}
\newtheorem{rem}[thm]{Remark}
\newtheorem{prop}[thm]{Proposition}
\numberwithin{equation}{section}
\title[Improved Sobolev regularity for nonlocal equations with VMO coefficients]{Improved Sobolev regularity for linear nonlocal equations with VMO coefficients}
\author{Simon Nowak}
\address{Universit\"at Bielefeld, Fakult\"at f\"ur Mathematik, Postfach 100131, D-33501 Bielefeld, Germany}
\email{simon.nowak@uni-bielefeld.de}
\keywords{Nonlocal operator, Nonlocal equations, Sobolev regularity, Calder\'on-Zygmund estimates}
\subjclass[2020]{35R09, 35B65, 35D30, 46E35, 47G20}
\begin{document}

\maketitle
\begin{abstract}
This work is concerned with both higher integrability and differentiability for linear nonlocal equations with possibly very irregular coefficients of VMO-type or even coefficients that are merely small in BMO. In particular, such coefficients might be discontinuous. While for corresponding local elliptic equations with VMO coefficients such a gain of Sobolev regularity along the differentiability scale is unattainable, it was already observed in previous works that gaining differentiability in our nonlocal setting is possible under less restrictive assumptions than in the local setting. In this paper, we follow this direction and show that under assumptions on the right-hand side that allow for an arbitrarily small gain of integrability, weak solutions $u \in W^{s,2}$ in fact belong to $W^{t,p}_{loc}$ for \emph{any} $s \leq t < \min\{2s,1\}$, where $p>2$ reflects the amount of integrability gained. In other words, our gain of differentiability does not depend on the amount of integrability we are able to gain. This extends numerous results in previous works, where either continuity of the coefficient was required or only an in general smaller gain of differentiability was proved.
\end{abstract}
\pagestyle{headings}

\section{Introduction} 
\subsection{Nonlocal equations} \label{setting}
We study the Sobolev regularity of weak solutions to linear nonlocal integro-differential equations of the form \blfootnote{Supported by SFB 1283 of the German Research Foundation.}
\begin{equation} \label{nonlocaleq}
L_A u =  f \text{ in } \Omega \subset \mathbb{R}^n,
\end{equation}
where $\Omega \subset \mathbb{R}^n$ is a domain (= open set) and $A:\mathbb{R}^n \times \mathbb{R}^n \to \mathbb{R}$ is a coefficient. In addition, for some fixed parameter $s \in (0,1)$ the nonlocal operator $L_A$ is formally given by
\begin{equation} \label{no}
L_A u(x) := p.v. \int_{\mathbb{R}^n} \frac{A(x,y)}{|x-y|^{n+2s}} (u(x)-u(y))dy, \quad x \in \Omega.
\end{equation}
Throughout the paper, for the sake of simplicity we assume that $n>2s$.
Furthermore, we require that the coefficient $A$ is measurable and that there exists some constant $\Lambda \geq 1$ such that
\begin{equation} \label{eq1}
\Lambda^{-1} \leq A(x,y) \leq \Lambda \text{ for almost all } x,y \in \mathbb{R}^n.
\end{equation}
Moreover, we assume that $A$ is symmetric, that is,
\begin{equation} \label{symmetry}
A(x,y)=A(y,x) \text{ for almost all } x,y \in \mathbb{R}^n.
\end{equation}
We define $\mathcal{L}_0(\Lambda)$ as the class of all such measurable coefficients $A$ that satisfy (\ref{eq1}) and (\ref{symmetry}). \par
Building on the results and techniques from our previous work \cite{MeV}, the aim of this paper is to show that under appropriate regularity assumptions on $A$ and $f$, weak solutions to (\ref{nonlocaleq}), which are initially assumed to belong to the fractional Sobolev space $W^{s,2}(\mathbb{R}^n)$,
in fact belong to higher-order spaces $W^{t,p}_{loc}(\Omega)$ for some $p>2$ and \emph{any} $s \leq t<\min\{2s,1\}$. For the relevant definitions of these spaces, we refer to section \ref{fracSob}. \par
Concerning our precise notion of weak solutions, denoting by $W^{s,2}_c(\Omega)$ the set of all functions that belong to $W^{s,2}(\mathbb{R}^n)$ and are compactly supported in $\Omega$, we have the following definition.
\begin{defin}
	Given $f \in L^\frac{2n}{n+2s}_{loc}(\Omega)$, we say that $u \in W^{s,2}(\mathbb{R}^n)$ is a weak solution of the equation $L_A u = f$ in $\Omega$, if 
	\begin{equation} \label{weaksolx1}
	\int_{\mathbb{R}^n} \int_{\mathbb{R}^n} \frac{A(x,y)}{|x-y|^{n+2s}} (u(x)-u(y))(\varphi(x)-\varphi(y))dydx = \int_{\Omega} f \varphi dx \quad \forall \varphi \in W^{s,2}_c(\Omega).
	\end{equation}
\end{defin}

\subsection{VMO coefficients} \label{VMOco}
Before stating our main results, we need to recall our notion of coefficients with vanishing mean oscillation which was introduced in \cite{MeV}.

\begin{defin}
Let $\delta>0$ and $A \in \mathcal{L}_0(\Lambda)$. We say that $A$ is $\delta$-vanishing in a ball $B \subset \mathbb{R}^n$, if for any $r>0$ and all $x_0,y_0 \in B$ with $B_r(x_0) \subset B$ and $B_r(y_0) \subset B$, we have $$ \dashint_{B_r(x_0)} \dashint_{B_r(y_0)} |A(x,y)-\overline A_{r,x_0,y_0}|dydx \leq \delta ,$$
where $\overline A_{r,x_0,y_0}:= \dashint_{B_r(x_0)} \dashint_{B_r(y_0)} A(x,y)dydx$. \par
Moreover, we say that $A$ is $(\delta,R)$-BMO in a domain $\Omega \subset \mathbb{R}^n$ and for some $R>0$, if for any $z \in \Omega$ and any $0<r\leq R$ with $B_r(z) \Subset \Omega$, $A$ is $\delta$-vanishing in $B_r(z)$. \par
Finally, we say that $A$ is VMO in $\Omega$, if for any $\delta>0$, there exists some $R>0$ such that $A$ is $(\delta,R)$-BMO in $\Omega$.
\end{defin}
Let us briefly put the above definition into a more classical context. In case $A$ belongs to the classical space of functions with vanishing mean oscillation $\textnormal{VMO}(\mathbb{R}^{2n})$ (see e.g.\ \cite[Section 2.1.1]{PS}, \cite{DiF} or \cite{Sarason}), then $A$ is also VMO in $\mathbb{R}^n$. However, our assumption that $A$ is VMO in $\Omega$ is more general, in the sense that we essentially only assume $A$ to be of vanishing mean oscillation in some arbitrarily small open neighbourhood of the diagonal in $\Omega \times \Omega$, while away from the diagonal in $\Omega \times \Omega$ and outside of $\Omega \times \Omega$ $A$ is not required to possess any regularity at all. In particular, any coefficient $A$ that is continuous in an open neighbourhood of the diagonal in $\Omega \times \Omega$ is VMO in $\Omega$. 
Nevertheless, continuity close to the diagonal is not essential in order for a coefficient to be VMO. \par Indeed, the class of discontinuous VMO functions is actually rather rich.
For instance, assuming that $\Omega$ contains the origin, if for some $\alpha \in (0,1)$ we have
\begin{equation} \label{ex}
	\begin{aligned}
		A(x,y)=
		\begin{cases}
			\textnormal{sin} \left (|\textnormal{log}(|x|+|y|)|^\alpha \right )+2 & \text{ if } x \neq 0 \text{ or } y \neq 0 \\
			0 & \text{ if } x=y=0
		\end{cases}
	\end{aligned}
\end{equation}
or
\begin{equation} \label{ex1}
	\begin{aligned}
		A(x,y)=
		\begin{cases}
			\textnormal{sin} \left (\textnormal{log}|\textnormal{log}(|x|+|y|)| \right )+2 & \text{ if } x \neq 0 \text{ or } y \neq 0 \\
			0 & \text{ if } x=y=0
		\end{cases}
	\end{aligned}
\end{equation}
in an open neighbourhood of $\textnormal{diag}(\Omega \times \Omega)$, then $A$ is VMO in $\Omega$. However, in both cases $A$ is discontinuous at $x=y=0$.
\subsection{Main results} \label{mr}
We are now in the position to state our main results.
\begin{thm} \label{mainint5z}
	Let $\Omega \subset \mathbb{R}^n$ be a domain, $s \in (0,1)$ and $\Lambda \geq 1$. If $A \in \mathcal{L}_0(\Lambda)$ is VMO in $\Omega$, 
	then for any weak solution $u \in W^{s,2}(\mathbb{R}^n)$
	of the equation
	$$
	L_A u = f \text{ in } \Omega,
	$$
	any $p \in (2,\infty)$ and any $s \leq t <\min\{2s,1\}$, we have the implication $$f \in L^\frac{np}{n+(2s-t)p}_{loc}(\Omega) \implies u \in W^{t,p}_{loc}(\Omega).$$
\end{thm}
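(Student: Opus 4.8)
**The plan is to proceed by a perturbation/freezing argument combined with a nonlocal Calderón–Zygmund iteration along the differentiability scale.** The overarching strategy is to compare the solution $u$ of $L_A u = f$ locally with solutions of the equation $L_{\overline{A}} v = 0$ with the coefficient frozen (i.e., replaced by its average $\overline{A}_{r,x_0,y_0}$ near the diagonal), exploiting that the VMO hypothesis makes $A - \overline{A}$ small in a mean-oscillation sense on small balls. For the homogeneous frozen equation one has good higher regularity estimates (essentially $W^{t,p}$-type bounds for all $t < \min\{2s,1\}$ and all $p$, coming from the translation-invariant theory or from the known regularity for the fractional Laplacian-type operator associated to a constant coefficient), and these are transferred to $u$ via a careful quantitative comparison. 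The key point emphasized in the abstract — that the gain of differentiability $t$ is decoupled from the gain of integrability $p$ — suggests the argument first establishes the differentiability gain at the base integrability level and then separately bootstraps integrability, or uses a real-variable argument (à la Caffarelli–Peral / Acerbi–Mingione) on level sets of a suitable fractional sharp maximal function.

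**Concretely, the steps I would carry out are:** (1) Localize: fix a ball $B \Subset \Omega$; split $u = u_1 + u_2$ where $u_1$ handles the "tail" contributions from outside a slightly larger ball and $u_2$ solves a localized problem. (2) For the localized piece, on each small ball $B_r(x_0)$ freeze the coefficient and write $L_{\overline{A}}(u_2 - v) = f + (L_{\overline{A}} - L_A)u_2$ where $v$ is the $L_{\overline{A}}$-harmonic replacement with the same exterior data; estimate the right-hand side using (i) the integrability of $f$ via a fractional Sobolev embedding to absorb the $f$-term into an $L^q$ norm of the right scaling, and (ii) the VMO smallness $\dashint\dashint |A - \overline{A}| \le \delta$ together with an $L^\infty$–or higher-integrability bound on the relevant difference quotients of $u_2$ to make $(L_{\overline{A}} - L_A)u_2$ small. (3) Use the regularity of $v$ (for the frozen operator: $v \in W^{t,p}$ locally with quantitative estimates, for all $t<\min\{2s,1\}$) to get a decay/comparison estimate of the form: a fractional excess quantity of $u$ at scale $\rho r$ is controlled by $\theta$ times the excess at scale $r$ plus a term involving $\|f\|$ and $\delta$. (4) Iterate this excess-decay, choosing $\delta$ small enough (which fixes the radius $R$ in the VMO definition) so that the iteration converges, yielding that $u \in W^{t,p}_{\mathrm{loc}}$; the decoupling of $t$ from $p$ comes because the frozen-operator regularity holds for all $t<\min\{2s,1\}$ independently of $p$, while $p$ only enters through the integrability exponent $\frac{np}{n+(2s-t)p}$ required of $f$ (which is exactly the scaling-correct Sobolev-dual exponent so that $f$ contributes at the $W^{t,p}$ level).

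**The main obstacle** I anticipate is step (2)–(3): making the comparison estimate quantitatively correct in the *fractional* differentiability scale. Unlike the classical local Calderón–Zygmund theory where one differentiates the equation, here one cannot simply differentiate $t$ times; instead one must work with finite differences or with a suitable nonlocal "excess" functional, and the regularity transfer from the frozen homogeneous equation $L_{\overline{A}} v = 0$ to $W^{t,p}$ bounds with $t$ arbitrarily close to $\min\{2s,1\}$ is delicate — this is presumably where the restriction $t < \min\{2s,1\}$ (rather than $t<2s$) is forced, and where the self-improving nature of the nonlocal operator (each application of $L_{\overline{A}}$ "costs" $2s$ derivatives but one can only recover up to one full derivative by Sobolev) must be tracked carefully. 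A secondary technical difficulty is controlling the nonlocal tail term $u_1$: the contribution of $u$ on $\mathbb{R}^n \setminus B$ must be estimated in a norm compatible with the $W^{t,p}$ target, which requires a tail estimate that is uniform under the dyadic scaling of the iteration. I would expect the paper to handle these via a previously established comparison/regularity estimate for the constant-coefficient (or frozen) operator together with an interpolation or embedding argument to convert $L^q$ control of $f$ into the correct $W^{t,p}$ contribution.
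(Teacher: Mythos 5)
Your overall framework (localize, compare with a homogeneous problem, exploit VMO smallness, run a Calder\'on--Zygmund-type real-variable argument) is in the right family, but there is a genuine gap at the exact point where this theorem goes beyond its predecessors, and your explanation of the decoupling of $t$ from $p$ does not survive scrutiny. Testing the equation of the difference $w=u-v$ with $w$ itself only yields smallness of $w$ in $W^{s,2}$. To transfer $W^{t,p}$ regularity with $t$ close to $\min\{2s,1\}$, the comparison must be performed at the level of the \emph{higher-order} fractional gradient, i.e.\ one needs $w$ small in $W^{\alpha,2}$ for $\alpha$ up to (essentially) $t$, and this does not follow from the energy estimate when $\alpha>s$: the order of the gradient-type quantity no longer matches the order of the equation. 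Your claim that the decoupling ``comes because the frozen-operator regularity holds for all $t<\min\{2s,1\}$ independently of $p$'' cannot be the mechanism, since that regularity of the comparison function was already available in the earlier work \cite{MeV}, which nevertheless only obtains a differentiability gain tied to the integrability gain. The paper closes this gap by an additional bootstrap: it assumes an a priori estimate of the form $[w]_{W^{\alpha,2}}\lesssim [w]_{W^{s,2}}+[g]_{W^{\alpha,m}}+\text{tails}$ (trivial for $\alpha=s$), observes that by linearity $w=u-v$ solves the same inhomogeneous equation so that the conclusion for some $t_1>s$ re-validates the hypothesis at level $\alpha=t_1$, and iterates finitely many times through an increasing sequence $t_1<t_2<\cdots$ reaching any $t<\min\{2s,1\}$. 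Your proposal contains no substitute for this step, and an excess-decay iteration built only on the $W^{s,2}$-level comparison would reproduce the weaker result of \cite{MeV}, not the theorem as stated.

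Two further points where your sketch diverges from or falls short of what is needed. First, the quantity being estimated is a double integral over $\Omega'\times\Omega'$; the paper works with dual pairs $U_\alpha(x,y)=|u(x)-u(y)|/|x-y|^{\alpha+\theta_\alpha}$ and measures $\mu_\alpha$ on $\mathbb{R}^{2n}$, runs the exit-time/covering argument in $\mathbb{R}^{2n}$, and must supplement the diagonal comparison estimates with off-diagonal reverse H\"older inequalities, since far from the diagonal the equation gives no useful comparison. A Campanato-type excess decay on balls in $\mathbb{R}^n$ naturally produces Morrey/H\"older information rather than control of this off-diagonal double integral. Second, a smaller discrepancy: the paper does not freeze the coefficient at all (it compares with $L_A v=0$ for the same VMO coefficient, using the $C^\beta$ estimates already available for such $v$), and it first reduces $L_Au=f$ to $L_Au=(-\Delta)^sg$ via the $H^{2s,p}$ theory for the fractional Laplacian, then removes the qualitative a priori assumption $u\in W^{t,p}$ by mollifying $f$ and solving auxiliary Dirichlet problems. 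These are implementable variations of your plan, but the missing higher-order comparison/bootstrap described above is essential and would have to be supplied.
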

If we are only interested in arriving at the conclusion that $u \in W^{t,p}_{loc}(\Omega)$ for some fixed $t$ and some fixed $p$, then it suffices for $A$ to be small in BMO, as our second main result indicates, in which we also state an explicit estimate on the solution.

\begin{thm} \label{mainint5}
	Let $\Omega \subset \mathbb{R}^n$ be a domain, $s \in (0,1)$, $\Lambda \geq 1$ and $R>0$. Moreover, fix some $p \in (2,\infty)$ and some $s<t<\min \{2s,1\}$. Then there exists some small enough $\delta=\delta(p,n,s,t,\Lambda)>0$, such that if $A \in \mathcal{L}_0(\Lambda)$ is $(\delta,R)$-BMO in $\Omega$,
	then for any weak solution $u \in W^{s,2}(\mathbb{R}^n)$
	of the equation
	$$
	L_A u = f \text{ in } \Omega,
	$$
	we have the implication $$f \in L^\frac{np}{n+(2s-t)p}_{loc}(\Omega) \implies u \in W^{t,p}_{loc}(\Omega).$$ In addition, for all relatively compact bounded open sets ${\Omega^\prime} \Subset {\Omega^{\prime \prime}} \Subset \Omega$, we have the estimate
	\begin{equation} \label{Wstest}
		[u]_{W^{t,p}(\Omega^\prime)} \leq C \left ([u]_{W^{s,2}(\mathbb{R}^n)} + ||f||_{L^{\frac{np}{n+(2s-t)p}}(\Omega^{\prime \prime})} \right ),
	\end{equation}
	where $C=C(n,s,t,\Lambda,R,p,\Omega^\prime,\Omega^{\prime \prime})>0$.
\end{thm}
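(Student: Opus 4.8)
The plan is to prove Theorem \ref{mainint5} first, since it contains the quantitative estimate, and then deduce Theorem \ref{mainint5z} from it by a covering/exhaustion argument: if $A$ is VMO in $\Omega$, then given the prescribed $p,t$ one picks $\delta=\delta(p,n,s,t,\Lambda)$ as in Theorem \ref{mainint5} and uses the VMO hypothesis to find $R>0$ with $A$ being $(\delta,R)$-BMO in $\Omega$; applying the local estimate on a countable family of balls exhausting $\Omega$ then yields $u\in W^{t,p}_{loc}(\Omega)$. The case $t=s$ in Theorem \ref{mainint5z} is handled separately, either by interpolating between $W^{s,p}$ for slightly larger differentiability (already covered) or by a direct simpler argument, since only a gain of integrability (not differentiability) is needed there.

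For Theorem \ref{mainint5} itself, the strategy I would follow is a perturbative Calderón–Zygmund scheme adapted to the nonlocal setting. The core idea: freeze the coefficient. Near a point $z$, compare $u$ with the solution $w$ of the equation $L_{\overline A}w=0$ with the \emph{constant} (frozen, averaged) coefficient $\overline A$ obtained from the $\delta$-vanishing condition in a small ball. For such translation-invariant kernels one has (or can cite/establish) good higher regularity for $w$ — essentially $w\in W^{t,p}$ locally with estimates, since the constant-coefficient operator is, up to a bounded factor, the fractional Laplacian $(-\Delta)^s$, for which the needed Sobolev regularity $W^{t,p}$, $t<\min\{2s,1\}$, is standard (e.g.\ via Bessel potential/Mikhlin multiplier arguments or known nonlocal regularity theory). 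The difference $v=u-w$ solves an equation whose right-hand side splits into (i) a genuine source term coming from $f$, which under the integrability hypothesis $f\in L^{\frac{np}{n+(2s-t)p}}_{loc}$ contributes controllably by a fractional Sobolev embedding / fractional-integration estimate, and (ii) an error term of the form $(L_A-L_{\overline A})u$, which is small precisely because $\|A-\overline A\|$ is controlled by $\delta$ on the relevant scales. One then runs a self-improvement / iteration over dyadic scales: estimating level sets of a suitable (fractional) maximal function of the "gradient" $\frac{u(x)-u(y)}{|x-y|^{t}}|x-y|^{-n/2}$-type quantity, using a Vitali-type covering and a good-$\lambda$ inequality (or the Acerbi–Mingione / Caffarelli–Peral machinery), to upgrade from the $L^2$-bound $[u]_{W^{s,2}}$ to an $L^p$-bound on the fractional-gradient quantity at differentiability level $t$, which is exactly $[u]_{W^{t,p}(\Omega')}$.

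A technically cleaner route, which I expect is the one actually taken, is to reformulate matters in terms of a "fractional gradient" $D^s$ or of finite-difference operators and to derive, from the $\delta$-vanishing hypothesis plus interior Caccioppoli/energy estimates, a reverse-Hölder or excess-decay inequality for the tail-corrected Gagliardo-type energy of $u$; combined with a $W^{s,p}$-type result for the constant-coefficient problem this feeds into a Gehring-type or Giaquinta–Modica iteration. The bookkeeping of the nonlocal tail term $\int_{\mathbb{R}^n\setminus B_r}\frac{|u(y)|}{|x-y|^{n+2s}}dy$ throughout — making sure it is always absorbed into $[u]_{W^{s,2}(\mathbb{R}^n)}$ and into the $f$-term with the correct scaling — is a persistent chore but not the conceptual difficulty.

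The main obstacle is obtaining the sharp range $t<\min\{2s,1\}$ for the \emph{variable}-coefficient problem, i.e.\ transferring the full differentiability of the frozen problem through the perturbation without losing a fraction of a derivative. The error term $(L_A-L_{\overline A})u$ is only controlled in terms of $u$ at the original regularity level $W^{s,2}$, so a naive fixed-point closes only at a lower differentiability; the fix is to iterate the gain in finitely many steps, at each step raising the differentiability of $u$ by a small amount while keeping the same (or a slightly improving) integrability, until one reaches any prescribed $t<\min\{2s,1\}$ — crucially using that the frozen-coefficient estimate is available at \emph{every} such $t$, so the perturbation argument can be bootstrapped rather than applied once. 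Verifying that the smallness threshold $\delta$ can be chosen uniformly along this finite iteration (depending only on $p,n,s,t,\Lambda$), and that the constant $C$ in \eqref{Wstest} aggregates correctly under the covering, is the delicate quantitative point.
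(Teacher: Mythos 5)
Your plan captures the architecture of the paper's argument: a comparison/perturbation scheme run on fractional-gradient quantities of the form $|u(x)-u(y)|/|x-y|^{\alpha+\theta_\alpha}$ paired with measures $\mu_\alpha$ on $\mathbb{R}^{2n}$, good-$\lambda$ inequalities for maximal functions of these quantities combined with a Calder\'on--Zygmund/Vitali covering, and --- crucially --- a finite bootstrap raising the differentiability in small steps until any $t<\min\{2s,1\}$ is reached, which is exactly how the paper overcomes the fact that the error in the comparison step is only controlled at the base level $W^{s,2}$. Three points where your route differs from, or falls short of, what the paper actually does are worth recording. First, the paper does \emph{not} freeze the coefficient: the comparison function $v$ solves $L_A v=0$ with the same variable coefficient, and its $C^\beta$ regularity for every $\beta<\min\{2s,1\}$ is imported from the predecessor work on VMO coefficients; freezing (as you propose) is what the earlier paper did and is viable, but it is not needed here and would reintroduce an extra error term. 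Second, the paper does not treat the source $f$ directly inside the perturbation scheme; it first solves $(-\Delta)^s g=f$, transfers integrability of $f$ into $W^{t,p}$ regularity of $g$ via the $H^{2s,p}$ theory of the fractional Laplacian, and then proves the implication $g\in W^{t,p}_{loc}\Rightarrow u\in W^{t,p}_{loc}$ for equations $L_Au=(-\Delta)^sg$; your ``fractional integration estimate for the $f$-term'' would have to be made precise, and this reduction is the clean way to do it. Third, and this is the genuine gap: the level-set/absorption argument only yields an \emph{a priori} estimate, valid for solutions already known (qualitatively) to lie in $W^{t,p}$, since a term $\int(\mathcal{M}(U_\alpha^2))^{\widetilde q/2}d\mu_\alpha$ must be reabsorbed and hence must be finite. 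Your proposal never addresses how to remove this a priori assumption for the given solution $u$. The paper does so by mollifying $f$, solving approximating Dirichlet problems with bounded data (whose solutions are $C^\beta_{loc}$ and hence qualitatively admissible), applying the a priori estimate to the approximations, and passing to the limit with Fatou's lemma; without some such approximation step the scheme does not close.
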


We stated Theorem \ref{mainint5z} and Theorem \ref{mainint5} in terms of the higher integrability exponent $p$ at which we arrive. Since in some circumstances it might be more natural to instead prescribe the integrability of the source function $f$, we also state the following reformulation of Theorem \ref{mainint5z}.
\begin{thm} \label{mainint5zx}
	Let $\Omega \subset \mathbb{R}^n$ be a domain, $s \in (0,1)$, $\Lambda \geq 1$, fix some $s \leq t < \min \{2s,1\}$ and let $f \in L^q_{loc}(\Omega)$ for some $q \in \left (\frac{2n}{n+2(2s-t)},\infty \right )$. In addition, assume that $A \in \mathcal{L}_0(\Lambda)$ is VMO in $\Omega$. Then for any weak solution $u \in W^{s,2}(\mathbb{R}^n)$
	of the equation
	$L_A u = f \text{ in } \Omega,$ we have
	$$u \in \begin{cases} W^{t,\frac{nq}{n-(2s-t)q}}_{loc}(\Omega) , & \text{ if } q<\frac{n}{2s-t} \\ W^{t,p}_{loc}(\Omega) \text{ for any } p \in (1,\infty), & \textnormal{ if } q \geq \frac{n}{2s-t}. \end{cases}$$
\end{thm}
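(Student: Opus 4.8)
The plan is to deduce Theorem~\ref{mainint5zx} directly from Theorem~\ref{mainint5z} by an algebraic manipulation of the integrability exponents, together with a standard exhaustion argument to cover the case $q \geq \tfrac{n}{2s-t}$. The point is that Theorem~\ref{mainint5z} is stated in terms of the target exponent $p \in (2,\infty)$, with the hypothesis $f \in L^{\frac{np}{n+(2s-t)p}}_{loc}(\Omega)$; here the exponent $q = q(p) := \tfrac{np}{n+(2s-t)p}$ is an increasing function of $p$ on $(2,\infty)$, so we should be able to invert this relation and read off, for a given $q$, the largest $p$ for which the hypothesis is met.

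First I would carry out the elementary computation: solving $q = \tfrac{np}{n+(2s-t)p}$ for $p$ gives $p = \tfrac{nq}{n-(2s-t)q}$, which is positive and finite precisely when $q < \tfrac{n}{2s-t}$. In that regime one checks $p = \tfrac{nq}{n-(2s-t)q} > 2$: indeed this is equivalent to $nq > 2n - 2(2s-t)q$, i.e. $(n + 2(2s-t))q > 2n$, i.e. $q > \tfrac{2n}{n+2(2s-t)}$, and since $t \geq s$ we have $2(2s-t) \leq 2s$, so $\tfrac{2n}{n+2(2s-t)} \geq \tfrac{2n}{n+2s}$; as $q > \tfrac{2n}{n+2s}$ by assumption, the inequality $p > 2$ holds. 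Hence Theorem~\ref{mainint5z} applies with this choice of $p$ (noting $f \in L^q_{loc}(\Omega)$ is exactly the required hypothesis $f \in L^{\frac{np}{n+(2s-t)p}}_{loc}(\Omega)$), yielding $u \in W^{t,p}_{loc}(\Omega) = W^{t,\frac{nq}{n-(2s-t)q}}_{loc}(\Omega)$, which is the first case.

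For the second case $q \geq \tfrac{n}{2s-t}$, I would fix an arbitrary relatively compact open set $\Omega' \Subset \Omega$ and observe that, by H\"older's inequality on the bounded set $\Omega'$, we have $f \in L^{\tilde q}_{loc}(\Omega)$ for every $\tilde q < \tfrac{n}{2s-t}$ (taking $\tilde q$ also $> \tfrac{2n}{n+2s}$, which is automatic once $\tilde q$ is close enough to $\tfrac{n}{2s-t}$ since $\tfrac{n}{2s-t} > \tfrac{2n}{n+2s}$ as $2s - t < s < \tfrac{n+2s}{2}$ — in fact more simply because $\tfrac{n}{2s-t} \geq \tfrac{n}{2s} > \tfrac{2n}{n+2s}$). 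Applying the first case with this $\tilde q$ in place of $q$ gives $u \in W^{t,\frac{n\tilde q}{n-(2s-t)\tilde q}}_{loc}(\Omega)$; and as $\tilde q \uparrow \tfrac{n}{2s-t}$ the exponent $\tfrac{n\tilde q}{n-(2s-t)\tilde q} \to \infty$, so for any prescribed $p \in (1,\infty)$ we may choose $\tilde q$ large enough that $\tfrac{n\tilde q}{n - (2s-t)\tilde q} \geq p$, and then $W^{t,\frac{n\tilde q}{n-(2s-t)\tilde q}}_{loc}(\Omega) \hookrightarrow W^{t,p}_{loc}(\Omega)$ on $\Omega'$ (again by H\"older on the bounded set $\Omega'$, using $t < 1$ so that the Gagliardo seminorm behaves well under this embedding); since $\Omega'$ was arbitrary this gives $u \in W^{t,p}_{loc}(\Omega)$.

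The argument is essentially bookkeeping with exponents, so I do not anticipate a genuine obstacle; the only point requiring a little care is the verification that the exponent $p$ produced by inverting the relation indeed lands in the range $(2,\infty)$ demanded by Theorem~\ref{mainint5z}, which as shown above hinges on the standing hypothesis $q > \tfrac{2n}{n+2s}$ combined with $t \geq s$. A secondary point worth stating explicitly is that all the integrability inclusions used (passing from $L^q$ to smaller $L^{\tilde q}$, and the Sobolev-space inclusions $W^{t,p_1}_{loc} \hookrightarrow W^{t,p_2}_{loc}$ for $p_1 \geq p_2$) are local statements that follow from H\"older's inequality applied on relatively compact subsets, which is why the conclusion is naturally phrased with the $loc$ subscript rather than with a global norm estimate.
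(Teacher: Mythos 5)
Your overall route is the same as the paper's: invert $q=\tfrac{np}{n+(2s-t)p}$ to get $p=\tfrac{nq}{n-(2s-t)q}$, feed this into Theorem \ref{mainint5z}, and treat $q\ge\tfrac{n}{2s-t}$ by monotonicity of the exponent in $p$. However, your verification that $p>2$ is logically backwards. You correctly reduce $p>2$ to $q>\tfrac{2n}{n+2(2s-t)}$ and correctly observe that $\tfrac{2n}{n+2(2s-t)}\ge\tfrac{2n}{n+2s}$ when $t\ge s$; but the hypothesis $q>\tfrac{2n}{n+2s}$ bounds $q$ from below by the \emph{smaller} of the two thresholds, so it does not imply $q>\tfrac{2n}{n+2(2s-t)}$. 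The implication would require the opposite inequality between the thresholds, which holds only for $t=s$. In fact, for $t>s$ and $q\in\bigl(\tfrac{2n}{n+2s},\tfrac{2n}{n+2(2s-t)}\bigr]$ one gets $p\le 2$ (e.g.\ $n=2$, $s=\tfrac12$, $t=0.9$, $q=1.4$ gives $p\approx 1.51$), and Theorem \ref{mainint5z}, which requires $p\in(2,\infty)$, is then not applicable. Your instinct to check this hypothesis was sound, but the check fails; be aware that the paper's own proof of this case applies Theorem \ref{mainint5z} without addressing the restriction $p>2$ either, and the borderline range is not reachable from Theorem \ref{mainint5z} by the embeddings available in the paper (Propositions \ref{BesselTr} and \ref{Sobcont} only move toward higher integrability/lower differentiability along or below the Sobolev line).

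The second, and independent, gap is in your treatment of $q\ge\tfrac{n}{2s-t}$: the embedding $W^{t,p_1}_{loc}\hookrightarrow W^{t,p_2}_{loc}$ for $p_1\ge p_2$ at the \emph{same} order $t$, which you justify "by H\"older on the bounded set $\Omega'$", is false for fractional Sobolev spaces. The exponent bookkeeping in H\"older's inequality leaves the factor $\bigl(\int_{\Omega'}\int_{\Omega'}|x-y|^{-n}\,dx\,dy\bigr)^{1-p_2/p_1}$, which is infinite; the paper explicitly warns, just before Proposition \ref{Sobcont}, that the spaces $W^{s,p}(\Omega)$ are not nested as $p$ decreases. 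The correct repair is the paper's: Proposition \ref{Sobcont} gives $W^{t',p'}(\Omega')\hookrightarrow W^{t,p}(\Omega')$ for $p\le p'$ only after strictly lowering the differentiability, $t'>t$. Since in this regime the conclusion $u\in W^{t',p'}_{loc}(\Omega)$ is available for every $t'<\min\{2s,1\}$ and every $p'>2$ (one checks $\tfrac{np'}{n+(2s-t')p'}\le q$ for $t'$ close enough to $t$), choosing some $t'\in(t,\min\{2s,1\})$ and $p'\ge\max\{p,3\}$ and then applying Proposition \ref{Sobcont} yields $u\in W^{t,p}_{loc}(\Omega)$ for $p\in(1,2]$ as well.
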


Since for any $s < t < \min \{2s,1\}$ we have $\frac{2n}{n+2(2s-t)} < 2$, Theorem \ref{mainint5zx} in particular implies the following higher differentiability result for nonlocal equations with right-hand side in $L^2$.
\begin{thm} \label{higherdiff}
	Let $\Omega \subset \mathbb{R}^n$ be a domain, $s \in (0,1)$, $\Lambda \geq 1$ and $f \in L^2_{loc}(\Omega)$. In addition, assume that $A \in \mathcal{L}_0(\Lambda)$ is VMO in $\Omega$. Then for any weak solution $u \in W^{s,2}(\mathbb{R}^n)$
	of the equation
	$L_A u = f \text{ in } \Omega,$ we have $u \in W^{t,2}_{loc}(\Omega)$ for any $s < t < \min \{2s,1\}$.
\end{thm}

\begin{rem} \label{mainrem} \normalfont
Actually, the conclusions of Theorem \ref{mainint5z}, Theorem \ref{mainint5zx} and Theorem \ref{higherdiff} also remain valid for a class of coefficients $A$ that in general might not be VMO, including in particular irregular coefficients that are translation invariant inside of $\Omega$. \par More precisely, our approach is flexible enough in order to include the case when $A \in \mathcal{L}_0(\Lambda)$ satisfies $A(x,y)=a(x-y)$ for all $x,y \in \Omega$ and some measurable function $a: \mathbb{R}^n \to \mathbb{R}$, but is not required to satisfy any additional regularity assumption. For a more elaborate discussion regarding this extension of our main results, we refer to Remark \ref{endremark}.
\end{rem}

\subsection{Local elliptic equations with VMO coefficients}
From the point of view of the regularity theory for local elliptic equations, our main results can be considered to be somewhat surprising. In order to illustrate this at first glance surprising nature of our main results, let us briefly consider local second-order elliptic equations in divergence form of the type
\begin{equation} \label{localeq}
\textnormal{div}(B \nabla u)= f \quad \text{in } \Omega,
\end{equation}
where the matrix of coefficients $B=\{b_{ij}\}_{i,j=1}^n$ is assumed to be uniformly elliptic and bounded. As it is for instance rigorously established in \cite{GKV}, the equation (\ref{localeq}) can be thought of as a local analogue of the nonlocal equation (\ref{nonlocaleq}) corresponding to the limit case $s=1$. Therefore, it might be intuitive to guess that the regularity properties of solutions to the nonlocal equation (\ref{nonlocaleq}) should in some sense correspond to the ones of the equation (\ref{localeq}). However, it turns out that in the context of higher regularity, this is \emph{not true} at all.  \par A classical fact (see e.g.\ \cite{Morrey,Simader}) is that if the coefficients $b_{ij}$ are continuous in $\Omega$ and if $f \in L^\frac{np}{n+p}_{loc}(\Omega)$ for some $p>2$, then weak solutions $u \in W^{1,2}_{loc}(\Omega)$ of the equation (\ref{localeq}) belong to $W^{1,p}_{loc}(\Omega)$. While for equations with general measurable coefficients such a gain of regularity is not achievable, it was nevertheless realized later (see \cite{DiF}) that the above assertion remains true if the continuity assumption on the coefficients is relaxed to assuming that the coefficients belong to the space of functions with vanishing mean oscillation $\textnormal{VMO}(\Omega)$ (see also e.g.\ \cite{Kin,AM,DongKim1,BD} for some more general developments). In addition, if one is only interested in obtaining $W^{1,p}_{loc}$ regularity for some fixed $p$, then similar to our Theorem \ref{mainint5}, in more recent years it was observed that it suffices for $B$ to be small in BMO, see \cite{ByunR,ByunLp}. However, in contrast to our main results, the results mentioned above do not yield any differentiability gain. \par And indeed, in order to gain any amount of differentiability along the Sobolev scale in the setting of local equations, a corresponding amount of differentiability has to be imposed on the coefficients, which can already be observed in one-dimensional examples (see e.g.\ \cite[section 1]{selfimpro}). Thus, in the setting of local elliptic equations with VMO or even continuous coefficients in general \emph{no differentiability gain at all} is attainable. In contrast, our main results show that in the setting of nonlocal equations with VMO coefficients, the differentiabilty of weak solutions improves quite significantly.
Let us give some further illustrations of these improved regularizing effects of nonlocal equations contained in our main results. \par In fact, in the case when $s \leq 1/2$, we are able to almost match the optimal Calder\'on-Zygmund-type Sobolev regularity for the fractional Laplacian, which corresponds to the case when the coefficient $A$ is constant. Namely, it is known that for the weak solution of the Dirichlet problem
$$
	\begin{cases} \normalfont
		(-\Delta)^s u = f & \text{ in } \Omega \\
		u=0 & \text{ a.e. in } \mathbb{R}^n \setminus \Omega,
	\end{cases}
$$ we have
$u \in W^{2s,p}_{loc}(\Omega)$ whenever $f \in L^p(\Omega)$ for some $p \in \left [2,\infty \right )$ (see \cite{Warma}), while our main results show that despite the presence of a general VMO coefficient $A$ in (\ref{nonlocaleq}), for $s \leq 1/2$ weak solutions of (\ref{nonlocaleq}) still belong to $W^{t,p}_{loc}(\Omega)$ for any $t<2s$ whenever $f \in L^p_{loc}(\Omega)$. This is in sharp contrast to the setting of local second-order equations, since weak solutions $u \in W^{1,2}_{loc}(\Omega)$ to the Poisson equation $\Delta u= f$ in $\Omega$ belong to $W^{2,p}_{loc}(\Omega)$ whenever $f \in L^p_{loc}(\Omega)$, gaining a full weak derivative, while as mentioned above, in the presence of VMO coefficients in (\ref{localeq}) in general not even a gain of fractional differentiability can be expected. \par 
In the case when $s > 1/2$, our main results only yield differentiability for any $t<1$, so that in this case we are no longer able to almost match the optimal Sobolev regularity for the fractional Laplacian. However, this seems natural to us, since we do not expect that the differentiability of solutions to local second-order equations can be exceeded by solutions to corresponding nonlocal equations of lower order. Nevertheless, for $s \geq 1/2$ our main results in particular show that weak solutions to nonlocal equations with VMO coefficients of the type (\ref{nonlocaleq}) almost share the amount differentiability that weak solutions to local equations with VMO coefficients of the type (\ref{localeq}) possess, despite the fact that the order of such nonlocal equations is lower.

\subsection{Previous related results} \label{pr}
By now, there is a substantial amount of works concerning the regularity theory for weak solutions to nonlocal equations of the type (\ref{nonlocaleq}). \par This is especially true concerning regularity results of purely nonlocal type, in the sense that the obtained results do not have analogues in the regularity theory of local elliptic equations.
This line of results was started in the papers \cite{selfimpro} and \cite{Schikorra}, where it was demonstrated that in the case of general bounded measurable coefficients $A \in \mathcal{L}_0(\Lambda)$, weak solutions to nonlocal equations of the type (\ref{nonlocaleq}) are slightly higher differentiable and higher integrable, provided the right-hand side satisfies $f \in L^q_{loc}$ for some $q>\frac{2n}{n+2s}$. Our main results show that under the additional assumption that $A$ is VMO, the conclusions of the results in \cite{selfimpro,Schikorra} can be improved to gaining larger amounts of differentiability and integrability. \par
Concerning results on higher Sobolev regularity for nonlocal equations of the type (\ref{nonlocaleq}), in \cite{MSY} Mengesha, Schikorra and Yeepo proved results similar to our Theorem \ref{mainint5z} in the case when $\Omega=\mathbb{R}^n$ and under the assumption that the mapping $x \mapsto A(x,y)$ is globally H\"older continuous for some arbitrarily small H\"older exponent. Since this H\"older continuity assumption on $A$ in particular does not include discontinuous coefficients of VMO-type like (\ref{ex}) and (\ref{ex1}), in \cite[p.\ 10]{MSY} the authors raised the question if the regularity gain they obtained remains valid for coefficients that merely belong to VMO. Therefore, one of the main achievements of the present paper is that our main results confirm this conjecture to be true, even establishing the desired regularity in the slightly more general case when the coefficient is merely assumed to be small in BMO. Moreover, in contrast to \cite{MSY} we are also able to include translation invariant coefficients that do not satisfy any smoothness assumption, see Remark \ref{endremark}. In addition, we argue on a completely different set of techniques in comparison to the ones applied in \cite{MSY}. Namely, while the key ingredient in \cite{MSY} is given by commutator estimates, our approach is based on a delicate interplay between comparison estimates and so-called dual pairs (see section \ref{app}). \par
Furthermore, in \cite{MeV} we proved weaker versions of the main results in the present paper, in the sense that the differentiability gain obtained in \cite{MeV} depends on $n,s$ and in particular the amount of integrability that we are able to gain, while in our main results stated in section \ref{mr} an arbitrarily small gain of integrability suffices in order to gain differentiability in the full range $s < t < \min \{2s,1\}$. For this reason, the amount of differentiability gained in \cite{MeV} only matches the one in this paper in the case when a very large amount of integrability is prescribed on the right-hand side $f$, while in general the differentiability gain in this work exceeds the one obtained in \cite{MeV} by a very substantial amount. \par This is probably illustrated best in the setting of our Theorem \ref{higherdiff}: For $f \in L^2_{loc}(\Omega)$, \cite[Theorem 1.3]{MeV} only implies that $u \in W^{t,2}_{loc}(\Omega)$ for any $t$ in the restricted range
$$
	s<t < t_{n,s} := \begin{cases} \frac{ns+4s^2}{n+2s} , & \text{ if } s \leq 1/2 \\ \frac{ns+4s-4s^2}{n+2-2s}, & \text{ if } s > 1/2. \end{cases}
$$
In particular, e.g.\ for $n=2$ and $s=1/2$, \cite[Theorem 1.3]{MeV} yields differentiability for any $t<2/3$, while in this case our Theorem \ref{higherdiff} yields differentiability for any $t<1$. In higher dimensions, the improvement in differentiability gain becomes even more visible. In fact, for any $s \in (0,1)$ and any fixed $\varepsilon>0$, there exists some large enough $n=n(s,\varepsilon)$ such that $t_{n,s} < s+\varepsilon$, so that the gain of differentiability in \cite{MeV} is in general very small in the case when $f$ merely belongs to $L^2_{loc}(\Omega)$. On the other hand, for $f \in L^2_{loc}(\Omega)$ our Theorem \ref{higherdiff} implies that $u \in W^{t,2}_{loc}(\Omega)$ in the whole range $s<t<\min \{2s,1\}$, independently of $n$. \par 
Moreover, in \cite{MeN} for $p \in (2,\infty)$ it was proved that weak solutions $u$ to (\ref{nonlocaleq}) belong to $W^{s,p}_{loc}(\Omega)$ whenever $f \in L^\frac{np}{n+sp}_{loc}(\Omega)$ and $A$ is continuous in $\Omega \times \Omega$, which corresponds to the case of no differentiability gain as in the setting of local equations.
\par Also, in the case when $f \in L^2_{loc}(\Omega)$ and $A \in C^s(\Omega \times \Omega)$, by using difference quotients, in \cite{Cozzi} it was shown that weak solutions to (\ref{nonlocaleq}) belong to $W^{t,2}_{loc}(\Omega)$ for any $t < 2s$, which also follows from our Theorem \ref{higherdiff} in the case when $s \leq 1/2$.
In other words, in this case we not only do not need $C^s$ regularity of the coefficient $A$, but not even continuity of $A$ in order to achieve this higher differentiability result. In fact, it is sufficient for $A$ to be VMO in $\Omega$. \par 
More results regarding Sobolev regularity for nonlocal equations are for example proved in \cite{BL,Grubb,KassMengScott,MP,YLKS,MSc,DongKim,Auscher,Me}, while various results on H\"older regularity are proved in \cite{BLS,Fall,Fall1,MeH,NonlocalGeneral,FracLap,CSa,CCV,finnish,Kassmann,Silvestre,Peral,Stinga,CK,Chaker,CDF}. Furthermore, for some regularity results concerning nonlocal equations similar to (\ref{nonlocaleq}) in the case when the right-hand side is merely a measure, we refer to \cite{mdata}.

\subsection{Some remaining open questions and possible extensions} \label{oq}
First of all, while as we discussed in section \ref{pr} the differentiability gain in \cite{MeV} is in general substantially smaller than the gain we achieve in our main results, the main results in \cite{MeV} hold also for certain nonlinear generalizations of the equation (\ref{nonlocaleq}), while in this paper and also in \cite{MSY} only linear equations are considered. Thus, a natural question is if the improved differentiability gain in the present paper remains valid for nonlinear equations. \par 
In addition, in \cite{MSY} the lower bound we imposed on $A$ in (\ref{eq1}) is only assumed to hold at the diagonal, so that another naturally arising question is if the lower bound on $A$ can be relaxed to hold only at the diagonal also in the case when $A$ is merely VMO. \par
Furthermore, in \cite{AFLY}, in the case of the fractional Laplacian, that is, in the special case when $A$ is constant, a global regularity result corresponding to our Theorem \ref{mainint5zx} was proved under the additional restriction that $q<\frac{1}{t-s}$, which is sharp when dealing with regularity up to the boundary. In view of this global regularity result, another interesting question is to what extent the conclusions of our main results, which deal with local regularity, remain valid up to the boundary. \par
Moreover, we believe that our approach is flexible enough in order to generalize our main results to include so-called local weak solutions as considered e.g.\ in \cite{BL}, \cite{BLS} or \cite{MeH}, essentially only assuming that $u \in W^{s,2}_{loc}(\Omega)$ and the finiteness of the nonlocal tails of $u$. However, since including this slightly more general notion of solutions would require a revision of the previous work \cite{MeV} and most notably \cite{selfimpro}, we decided not to insist on this point. \par
Finally, another feature of our approach is that it also enables us to prove local $W^{t,p}$ estimates in the case when the right-hand side $f$ in (\ref{nonlocaleq}) is replaced by the fractional Laplacian or even by sums of more general nonlocal operators, see Theorem \ref{mainright} and Remark \ref{genrem}.

\subsection{Approach} \label{app}
Before commencing with the technical part of the paper, in this section we give a heuristic summary of our approach, in particular since we believe that the techniques displayed in this work have the potential to be useful in a large variety of situations involving nonlocal equations. \par
As mentioned, in the previous paper \cite{MeV}, we proved weaker versions of the main results in the present paper, gaining only a restricted amount of differentiability that depends on the amount of integrability we are able to gain. This was achieved by introducing ideas that on the one hand allow to prove suitable comparison estimates in our nonlocal setting, and on the other hand allow to combine various highly nontrivial covering techniques introduced in the papers \cite{CaffarelliPeral,selfimpro}.
Our approach in this paper essentially combines the techniques implemented in \cite{MeV} with some novel insights that enable us to gain differentiability independently of the integrability gain. Since an in-depth heuristic description of the philosophy of the approach from \cite{MeV} was already given in \cite[Section 1.5]{MeV}, here we focus on emphasizing the main novelties of the approach used in this work compared to the one applied in \cite{MeV}. \par 
The objects at the heart of the approach from \cite{MeV} are certain fractional gradients given by so-called dual pairs. Namely, for some fixed $\theta \in \left (0,\frac{1}{2} \right)$, we define a Borel measure $\mu$ on $\mathbb{R}^{2n}$ as follows. For any function $u:\mathbb{R}^n \to \mathbb{R}$ and $(x,y) \in \mathbb{R}^{2n}$ with $x \neq y$, we define the function 
\begin{equation} \label{Udef}
	U(x,y):=\frac{|u(x)-u(y)|}{|x-y|^{s+\theta}}.
\end{equation}
In addition, for any measurable set $E \subset \mathbb{R}^{2n}$, set
\begin{equation} \label{mudef}
\mu(E):= \int_{E} \frac{dxdy}{|x-y|^{n-2\theta}}.
\end{equation}
For any domain $\Omega \subset \mathbb{R}^n$, we then clearly have $u \in W^{s,2}(\Omega)$ if and only if $u \in L^2(\Omega)$ and $U \in L^2(\Omega \times \Omega,\mu)$,
so that in some sense the function $U$ and the measure $\mu$ are in duality.
Regarding larger exponents, by a simple computation, for any $p > 2$ and $\widetilde s:=s+\theta \left (1-\frac{2}{p} \right )>s$ we have 
\begin{equation} \label{muequiv}
u \in W^{\widetilde s,p}(\Omega) \quad \text{if and only if} \quad u \in L^p(\Omega) \text{ and } U \in L^p(\Omega \times \Omega,\mu).
\end{equation}
Therefore, a key feature of this approach to fractional-type gradients is that by proving higher integrability of the gradient-type function $U$ with respect to the measure $\mu$, we do not only gain regularity along the integrability scale of fractional Sobolev spaces, but also a substantial amount of higher differentiability! In \cite{MeV}, this property of such dual pairs of the type $(U,\mu)$ was then exploited by proving that in the restricted range $0<\theta < \min \{s,1-s\}$, we have $U \in L^p_{loc}(\Omega \times \Omega,\mu)$, which in turn then also gives some higher differentiability as indicated above. However, the amount of differentiability gained in this fashion is in general strictly smaller than the amount we gain in our main results, since a small amount of integrability gain also only yields a small gain along the differentiability scale. In the present paper, we overcome this issue by considering also fractional gradients and dual pairs of \emph{higher order}. More precisely, the key idea is to iteratively replace the function $U$ by fractional gradient-type functions of the type
\begin{equation} \label{Udefa}
	U_{\alpha}(x,y):=\frac{|u(x)-u(y)|}{|x-y|^{\alpha+\theta_\alpha}}
\end{equation}
and the above measure $\mu$ by measures of the form
\begin{equation} \label{mudefa}
	\mu_{\alpha}(E):= \int_{E} \frac{dxdy}{|x-y|^{n-2\theta_\alpha}},
\end{equation}
where $s \leq \alpha<\min\{2s,1\}$ and $\theta_\alpha := s+\theta-\alpha$. In a similar way as above, for any $p > 2$ and $\widetilde \alpha:=\alpha+\theta \left (1-\frac{2}{p} \right )>\alpha$, we have 
\begin{equation} \label{muequiv2}
	u \in W^{\widetilde \alpha,p}(\Omega) \quad \text{if and only if} \quad u \in L^p(\Omega) \text{ and } U_\alpha \in L^p(\Omega \times \Omega,\mu_\alpha).
\end{equation}
With these notions in place, let us now briefly sketch the further approach and in particular the iteration argument that leads to achieving $W^{t,p}_{loc}$ regularity for any $s<t<\min\{2s,1\}$. \par
First, we observe that instead of directly proving the desired regularity for nonlocal equations of the type $L_A u=f$, for technical reasons it is more appropriate for us to first focus on proving regularity for equations of the type $L_A u=(-\Delta)^s g$, where $(-\Delta)^s$ denotes the fractional Laplacian. This is because once we are able to transfer a sufficient amount of regularity from $g$ to $u$, in view of the known $H^{2s,p}$ estimates for the fractional Laplacian, we can then first transfer regularity from $f$ to some solution $g$ of $(-\Delta)^s g=f$ and then from $g$ to weak solutions $u$ of (\ref{nonlocaleq}).
Thus, we focus on proving that for weak solutions to $L_A u = (-\Delta)^s g$, for any $s<t<\min\{2s,1\}$ we have the implication
\begin{equation} \label{imp}
g \in W^{t,p}_{loc} \implies u \in W^{t,p}_{loc}.
\end{equation}
Instead of proving this implication directly, roughly speaking we focus on proving implications of the type
\begin{equation} \label{imp1}
G_\alpha \in L^p_{loc}(\Omega \times \Omega,\mu_\alpha) \implies U_\alpha \in L^p_{loc}(\Omega \times \Omega,\mu_\alpha)
\end{equation}
for any $\alpha \in [s,\min\{2s,1\})$, where $G_\alpha$ is defined in the same way as $U_\alpha$ with $u$ replaced by $g$. Since $\theta_\alpha$ decreases as $\alpha$ increases, this exactly leads to the implication (\ref{imp}) for any $s<t<\min\{2s,1\}$. \par
In order to prove the implication (\ref{imp1}), we make use of a covering argument implemented in detail in \cite{MeV}. The main idea is to cover the level sets $\left \{\mathcal{M}(U_\alpha^2) > \lambda^2 \right \}$ of the maximal function of $U_\alpha$ by dyadic cubes in order to show that these level sets decay sufficiently fast with respect to $\mu_\alpha$, which in view of standard measure-theoretic arguments then implies the desired implication (\ref{imp1}). However, since 
the above level sets are subsets of $\mathbb{R}^{2n}$ instead of $\mathbb{R}^{n}$, in our setup we have to run an exit time argument in $\mathbb{R}^{2n}$ instead of $\mathbb{R}^{n}$ in order to cover the level set of $U$ by Calder\'on-Zygmund cubes in $\mathbb{R}^{2n}$, which leads to rather severe technical difficulties. In particular, since close to the diagonal the information given by the equation can be used much more efficiently, an additional cover of the diagonal in terms of balls is constructed. However, since a large part of this technical covering argument works almost in exactly the same way as the one applied in \cite{MeV}, as indicated before, in this paper we primarily focus on the nontrivial modifications necessary in order to prove the implication (\ref{imp1}) in the higher-order case when $\alpha>s$. \par 
Namely, probably the most crucial complication in contrast to \cite{MeV} arises in the arguments applied in order to control the measures of the balls in the mentioned additional diagonal cover. In \cite{MeV}, the central tool in order to achieve this is given by a comparison estimate. More precisely, in \cite{MeV} the function $U$ was locally approximated in $L^2(\mu)$ by a corresponding function $V$, which is given as in (\ref{Udef}) with $u$ replaced by a weak solution $v$ of the corresponding homogeneous equation $L_{A_0} v=0$ with locally "frozen" coefficient $A_0$. Equivalently, it was proved that the difference $w:=u-v$ is small in $W^{s,2}$ whenever $g$ is small in $W^{s,2}$, which can be shown by testing the equation with $w$ itself. The mentioned covering argument then essentially allows to transfer regularity from $v$ to $u$. More precisely, in \cite{MeH} it was shown that such weak solutions $v$ to homogeneous equations with locally constant coefficients belong to $C^\beta$ for any $0<\beta < \min \{2s,1\}$, which suffices in order to transfer enough regularity from $v$ to $u$ in order to obtain the desired result. \par 
In contrast, proving such a comparison estimate for higher-order fractional gradients is more involved, since in this case the order of the gradient-type function no longer matches the order of the equation already in $L^2$. We resolve this issue as follows. In order to show that $U_\alpha$ is close to $V_\alpha$ in $L^2(\mu_\alpha)$ or equivalently, that $w=u-v$ is small in $W^{\alpha,2}$ whenever $g$ is small in $W^{\alpha,2}$, roughly speaking we additionally assume that $w$ satisfies an estimate of the form 
\begin{equation} \label{Wa2}
	[w]_{W^{\alpha,2}} \lesssim [w]_{W^{s,2}} + [g]_{W^{\alpha,m}} + \text{ tail terms}
\end{equation}
for some $m>2$.
This additional estimate then essentially allows to reduce the problem of proving the smallness of $w$ in $W^{\alpha,2}$ to showing the smallness of $w$ in $W^{s,2}$, which was already done in \cite{MeV}. In addition, while in \cite{MeV} it was necessary to locally freeze the coefficient $A$, since in view of the Sobolev embedding the main results in \cite{MeV} already imply a $C^\beta$ estimate for any $0<\beta < \min \{2s,1\}$ in the case when $A$ is merely VMO, in our situation freezing the coefficient is no longer necessary. However, in order to arrive at our main results, it then still remains to remove the assumption that the estimate (\ref{Wa2}) holds. \par 
We achieve this as follows. Since in the case when $\alpha=s$ the estimate (\ref{Wa2}) holds trivially, in this case (which corresponds to \cite{MeV}) we already achieve some higher differentiability or more precisely, we obtain that the implication (\ref{imp}) holds for some small enough $t_1>s$. But since due to the linearity of the equation, $w=u-v$ also satisfies the equation $L_A w = (-\Delta)^s g$, the estimate (\ref{Wa2}) is therefore also satisfied for $\alpha=t_1$. Thus, through the procedure we sketched above, we obtain that $u$ and $w$ satisfy the implication (\ref{imp1}) for $\alpha=t_1$, leading to the estimate (\ref{imp}) for some $t_2>t_1$, exceeding the amount of differentiability obtained in \cite{MeV}. Iterating this procedure finitely many times then indeed leads to the estimate (\ref{imp}) in the full range $s<t < \min \{2s,1\}$.

\subsection{Brief outline of the paper}
The paper is organized as follows. In section \ref{fracSob}, we define the fractional Sobolev spaces $W^{s,p}$ and mention some of their properties that we use throughout the paper. In section \ref{dual}, we then further discuss the notion of fractional gradients given by dual pairs introduced in the previous section \ref{app}. \par 
The rest of the paper is then devoted to the proof of our main results. In section \ref{ce} we implement the approximation argument for higher-order fractional gradients mentioned in section \ref{app}. In section \ref{gl}, we turn to proving certain good-$\lambda$ inequalities, both at the diagonal and far away from the diagonal. These good-$\lambda$ inequalities then allow to carry out an adaptation of the covering argument from \cite[Section 7]{MeV} for higher-order fractional gradients. Since the covering argument needed in our setting follows very closely the steps in \cite[Section 7]{MeV}, in section \ref{cover} we only explain the required adaptations in order to arrive at the desired level set estimate. In section \ref{ape}, this level set estimate is then used along with some delicate iteration arguments in order to prove a priori estimates for weak solutions. Finally, in section \ref{pmr} these a priori estimates are then combined with smoothing techniques in order to arrive at our main results.

\subsection{Some notation}
For convenience, let us fix some notation which we use throughout the paper. By $C,c$ and $C_i,c_i$, $i \in \mathbb{N}_0$, we always denote positive constants, while dependences on parameters of the constants will be shown in parentheses. As usual, by
$$ B_r(x_0):= \{x \in \mathbb{R}^n \mid |x-x_0|<r \}$$
we denote the open euclidean ball with center $x_0 \in \mathbb{R}^n$ and radius $r>0$. We also set $B_r:=B_r(0)$. In addition, by 
$$ Q_r(x_0):= \{x \in \mathbb{R}^n \mid |x-x_0|_\infty <r/2\}$$
we denote the open cube with center $x_0 \in \mathbb{R}^n$ and sidelength $r>0$.
Moreover, if $E \subset \mathbb{R}^n$ is measurable, then by $|E|$ we denote the $n$-dimensional Lebesgue-measure of $E$. If $0<|E|<\infty$, then for any $u \in L^1(E)$ we define
$$ \overline u_{E}:= \dashint_{E} u(x)dx := \frac{1}{|E|} \int_{E} u(x)dx.$$
As indicated in section \ref{app}, throughout this paper, we often consider integrals and functions on $\mathbb{R}^{2n}=\mathbb{R}^{n} \times \mathbb{R}^{n}$. Instead of dealing with the usual euclidean balls in $\mathbb{R}^{2n}$, for this purpose it is more convenient for us to use the balls generated by the norm
$$ ||(x_0,y_0)||:=\max\{|x_0|,|y_0|\}, \quad (x_0,y_0) \in \mathbb{R}^{2n}.$$
These balls with center $(x_0,y_0) \in \mathbb{R}^{2n}$ and radius $r>0$ are denoted by $\mathcal{B}_r(x_0,y_0)$ and are of the form
$$\mathcal{B}_r(x_0,y_0)=B_r(x_0) \times B_r(y_0).$$
In the case when $x_0=y_0$ we also write $\mathcal{B}_r(x_0):=\mathcal{B}_r(x_0,x_0),$
we call such balls diagonal balls. We also set $\mathcal{B}_r:=\mathcal{B}_r(0)$. Similarly, for $x_0,y_0 \in \mathbb{R}^n$ and $r>0$ we define $\mathcal{Q}_r(x_0,y_0):=Q_r(x_0) \times Q_r(y_0)$ and $\mathcal{Q}_r(x_0):=\mathcal{Q}_r(x_0,x_0)$ and also $\mathcal{Q}_r:=\mathcal{Q}_r(0)$.
\section{Fractional Sobolev spaces} \label{fracSob}
\begin{defin}
	Let $\Omega \subset \mathbb{R}^n$ be a domain. For $p \in [1,\infty)$ and $s \in (0,1)$, we define the fractional Sobolev space
	$$W^{s,p}(\Omega):=\left \{u \in L^p(\Omega) \mathrel{\Big|} \int_{\Omega} \int_{\Omega} \frac{|u(x)-u(y)|^p}{|x-y|^{n+sp}}dydx<\infty \right \}$$
	with norm
	$$ ||u||_{W^{s,p}(\Omega)} := \left (||u||_{L^p(\Omega)}^p + [u]_{W^{s,p}(\Omega)}^p \right )^{1/p} ,$$
	where
	$$ [u]_{W^{s,p}(\Omega)}:=\left (\int_{\Omega} \int_{\Omega} \frac{|u(x)-u(y)|^p}{|x-y|^{n+sp}}dydx \right )^{1/p} .$$
	In addition, we define the corresponding local fractional Sobolev spaces by
	$$ W^{s,p}_{loc}(\Omega):= \left \{ u \in L^p_{loc}(\Omega) \mid u \in W^{s,p}(\Omega^\prime) \text{ for any domain } \Omega^\prime \Subset \Omega \right \}.$$
	Also, we define the space 
	$$W^{s,p}_0(\Omega):= \left \{u \in W^{s,2}(\mathbb{R}^n) \mid u = 0 \text{ in } \mathbb{R}^n \setminus \Omega \right \}.$$
\end{defin}

We use the following fractional Poincar\'e inequality, see \cite[Section 4]{Mingione}.
\begin{lem} \label{Poincare} (fractional Poincar\'e inequality)
	Let $s \in (0,1)$, $p \in [1,\infty)$, $r>0$ and $x_0 \in \mathbb{R}^n$. For any $u \in L^p(B_r(x_0))$, we have
	$$ \int_{B_r(x_0)} \left | u(x)- \overline u_{B_r(x_0)} \right |^p dx \leq C r^{sp} \int_{B_r(x_0)} \int_{B_r(x_0)} \frac{|u(x)-u(y)|^p}{|x-y|^{n+sp}}dydx,$$
	where $C=C(s,p)>0$.
\end{lem}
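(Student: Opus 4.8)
The statement is the standard fractional Poincar\'e inequality, and the plan is to deduce it from Jensen's inequality together with the crude bound $|x-y| < 2r$ valid on $B_r(x_0) \times B_r(x_0)$. Since both sides of the asserted inequality are invariant under the translation $u \mapsto u(\cdot + x_0)$, I would first assume without loss of generality that $x_0 = 0$ and write $B := B_r(0)$, so that $|B| = \omega_n r^n$ with $\omega_n$ the Lebesgue measure of the unit ball in $\mathbb{R}^n$. We may also assume that the right-hand side is finite, since otherwise there is nothing to prove.

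The key step is to express the mean-oscillation term as a double average and apply Jensen's inequality. For a.e.\ $x \in B$ we have
$$ u(x) - \overline{u}_B = \dashint_B \bigl(u(x)-u(y)\bigr)\,dy, $$
so applying Jensen's inequality with the convex function $t \mapsto |t|^p$ and the probability measure $|B|^{-1}\,dy$ on $B$ gives $\bigl|u(x)-\overline{u}_B\bigr|^p \leq \dashint_B |u(x)-u(y)|^p\,dy$. Integrating this over $x \in B$ and using Tonelli's theorem yields
$$ \int_B \bigl|u(x)-\overline{u}_B\bigr|^p\,dx \leq \frac{1}{|B|} \int_B \int_B |u(x)-u(y)|^p\,dy\,dx. $$

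It then remains to reinsert the singular weight. For $x,y \in B$ one has $|x-y| < 2r$, hence $|x-y|^{n+sp} < (2r)^{n+sp}$ and therefore
$$ |u(x)-u(y)|^p \leq (2r)^{n+sp}\,\frac{|u(x)-u(y)|^p}{|x-y|^{n+sp}}. $$
Plugging this into the previous display and using $|B| = \omega_n r^n$ gives the claim with $C = 2^{n+sp}/\omega_n$; since $n$ is fixed throughout the paper this constant depends only on $s$ and $p$, and note that the powers of $r$ are consistent, as $r^{n+sp}/r^n = r^{sp}$.

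I do not expect any genuine obstacle here: the argument is a short chain consisting of Jensen's inequality, the triangle-type bound on $|x-y|$, and Tonelli's theorem, with the only bookkeeping being to record that everything is well defined for $u \in L^p(B)$ (the inner integrals are finite for a.e.\ $x$ precisely because the double integral on the right is assumed finite). Optimality of the constant is neither needed nor pursued.
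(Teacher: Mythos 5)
Your argument is correct. Note that the paper does not prove this lemma at all — it simply cites \cite[Section 4]{Mingione} — so there is no internal proof to compare against; your chain (write $u(x)-\overline u_{B}$ as the average $\dashint_B (u(x)-u(y))\,dy$, apply Jensen with the convex function $|t|^p$, integrate via Tonelli, and reinsert the weight using $|x-y|<2r$) is exactly the standard elementary proof one would expect the reference to contain, and the bookkeeping $r^{n+sp}/r^n = r^{sp}$ is right. The only cosmetic remark is that your constant $2^{n+sp}/\omega_n$ visibly depends on $n$ as well as on $s,p$, whereas the lemma writes $C=C(s,p)$; as you observe, $n$ is fixed throughout the paper, so this discrepancy is harmless.
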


\begin{prop} \label{Sobemb}
Let $\Omega \subset \mathbb{R}^n$ be a Lipschitz domain, $s \in (0,1)$ and $p \in [1,\infty)$.
\begin{itemize}
	\item If $sp<n$, then we have the continuous embedding
	$$ W^{s,p}(\Omega) \hookrightarrow L^\frac{np}{n-sp}(\Omega).$$
	\item If $sp=n$, then for any $q \in [1,\infty)$ we have the continuous embedding
	$$ W^{s,p}(\Omega) \hookrightarrow L^q(\Omega).$$
	\item If $sp>n$, then we have the continuous embedding
	$$ W^{s,p}(\Omega) \hookrightarrow C^{s-\frac{n}{p}}(\Omega).$$
\end{itemize}
In addition, if $sp>n$ and $\Omega=B_r(x_0)$ for some $r>0$ and some $x_0 \in \mathbb{R}^n$, then for any $u \in W^{s,p}(B_r(x_0))$, we have 
\begin{equation} \label{hemb}
	[u]_{C^{s-\frac{n}{p}}(B_r(x_0))} \leq C [u]_{W^{s,p}(B_r(x_0))},
\end{equation}
where $C=C(n,s,p)>0$.
\end{prop}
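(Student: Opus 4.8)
The plan is to deduce all three embeddings, together with the scale-invariant estimate~\eqref{hemb}, from the fractional Sobolev inequality on $\mathbb{R}^n$ and from the fractional Poincar\'e inequality of Lemma~\ref{Poincare}. First I would reduce to $\Omega=\mathbb{R}^n$: since $\Omega$ is Lipschitz there is a bounded linear extension operator $E\colon W^{s,p}(\Omega)\to W^{s,p}(\mathbb{R}^n)$ with $\|Eu\|_{W^{s,p}(\mathbb{R}^n)}\le C\|u\|_{W^{s,p}(\Omega)}$, so each embedding for $\Omega$ follows by applying the corresponding one on $\mathbb{R}^n$ to $Eu$ and restricting back to $\Omega$. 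It then suffices to treat the subcritical case $sp<n$, because the borderline case $sp=n$ reduces to it: for $0<s'<s$ one has the elementary inclusion $W^{s,p}(\Omega)\hookrightarrow W^{s',p}(\Omega)$ with constant $C(n,s,s',p)$, obtained by splitting the Gagliardo double integral into $\{|x-y|\le 1\}$, where $|x-y|^{-n-s'p}\le|x-y|^{-n-sp}$, and $\{|x-y|\ge 1\}$, where $|u(x)-u(y)|^p\le 2^{p-1}(|u(x)|^p+|u(y)|^p)$ and the kernel $|x-y|^{-n-s'p}$ is integrable in one variable; since then $s'p<n$, the subcritical case gives $W^{s',p}(\Omega)\hookrightarrow L^{np/(n-s'p)}(\Omega)$, and as $s'\uparrow s$ the target exponent tends to $\infty$, so interpolating with $L^p(\Omega)$ yields $W^{s,p}(\Omega)\hookrightarrow L^q(\Omega)$ for every $q\in[p,\infty)$, and, when $\Omega$ is bounded, for every finite $q\ge 1$ by an additional application of H\"older.

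For the subcritical case it remains to prove the homogeneous fractional Sobolev inequality $\|g\|_{L^{p^\ast}(\mathbb{R}^n)}\le C(n,s,p)\,[g]_{W^{s,p}(\mathbb{R}^n)}$ with $p^\ast=np/(n-sp)$, which combined with the $L^p$ norm gives the inhomogeneous embedding. I would run the classical argument: for $g$ smooth with compact support and any $r>0$, write $|g(x)|\le\dashint_{B_r(x)}|g(x)-g(y)|\,dy+\dashint_{B_r(x)}|g(y)|\,dy$; H\"older's inequality against the Gagliardo kernel bounds the first average by $C r^{s}T(x)$, where $T(x):=(\int_{\mathbb{R}^n}|g(x)-g(y)|^p|x-y|^{-n-sp}\,dy)^{1/p}$ satisfies $\|T\|_{L^p(\mathbb{R}^n)}=[g]_{W^{s,p}(\mathbb{R}^n)}$, while the second average is at most $C r^{-n/p^\ast}\|g\|_{L^{p^\ast}(\mathbb{R}^n)}$; optimizing in $r$ gives $|g(x)|\le C\,T(x)^{1-\sigma}\|g\|_{L^{p^\ast}}^{\sigma}$ with $\sigma$ determined by $(1-\sigma)p^\ast=p$, and then raising to the power $p^\ast$, integrating in $x$, and absorbing the finite factor $\|g\|_{L^{p^\ast}}$ on the left produces the inequality; a density argument extends it to all of $W^{s,p}(\mathbb{R}^n)$. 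As an alternative one can derive the same inequality from Lemma~\ref{Poincare} by a covering argument over a dyadic grid, or from the Besov description $[g]_{W^{s,p}}^p\sim\sum_j 2^{jsp}\|\Delta_j g\|_{L^p}^p$ together with $B^s_{p,p}\hookrightarrow L^{p^\ast}$.

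For the supercritical case $sp>n$ I would first establish the oscillation bound $|g(x)-g(y)|\le C(n,s,p)\,|x-y|^{s-\frac{n}{p}}\,[g]_{W^{s,p}(\mathbb{R}^n)}$. The decisive step is that, by H\"older's inequality followed by Lemma~\ref{Poincare}, for every ball $B_\rho=B_\rho(z)$ one has $|\dashint_{B_\rho}g-\dashint_{B_{\rho/2}}g|\le C\rho^{s-\frac{n}{p}}[g]_{W^{s,p}(B_\rho)}$; telescoping over the balls $B_{\rho/2^k}(x)$ — the resulting geometric series converges precisely because $s-\frac{n}{p}>0$ — both identifies a H\"older continuous representative of $g$ and yields $|g(x)-\dashint_{B_\rho(x)}g|\le C\rho^{s-\frac{n}{p}}[g]_{W^{s,p}(B_\rho(x))}$; finally, for $x,y$ with $\rho:=|x-y|$, chaining through the averages over $B_\rho(x)$, $B_{2\rho}(x)$ and $B_\rho(y)$ (note $B_\rho(y)\subset B_{2\rho}(x)$) gives the stated H\"older estimate. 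The estimate~\eqref{hemb} then follows by scaling: both $[\,\cdot\,]_{C^{s-n/p}(B_r(x_0))}$ and $[\,\cdot\,]_{W^{s,p}(B_r(x_0))}$ transform by the same factor $r^{s-n/p}$ under $u\mapsto u(x_0+r\,\cdot\,)$, so it suffices to take $r=1$; there one applies the oscillation bound to $u-\dashint_{B_1(x_0)}u$ on the fixed bounded Lipschitz domain $B_1(x_0)$ — handling pairs $x,y$ near $\partial B_1(x_0)$ by the extension operator for a ball, whose norm is universal, or by the quasiconvexity of $B_1(x_0)$ — and then uses Lemma~\ref{Poincare} to control the arising $L^p$ term by $[u]_{W^{s,p}(B_1(x_0))}$.

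The only step with genuine analytic content is the subcritical inequality of the second paragraph; the borderline and supercritical cases and~\eqref{hemb} are essentially bookkeeping built on it and on Lemma~\ref{Poincare}. The secondary technical nuisance is ensuring that the constant in~\eqref{hemb} depends neither on $r$ nor on $x_0$, including for pairs $x,y$ lying close to $\partial B_r(x_0)$, which the scaling reduction together with the use of a fixed reference ball (so that the extension and Poincar\'e constants become universal) takes care of.
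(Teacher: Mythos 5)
Your argument is correct, but it is far more self-contained than what the paper does: for the three embeddings the paper simply cites \cite[Theorems 6.7, 6.10, 8.2]{Hitch}, whereas you reprove them (extension operator to reduce to $\mathbb{R}^n$, the classical split-average/optimize-in-$r$ proof of the subcritical inequality, reduction of the borderline case to the subcritical one via $W^{s,p}\hookrightarrow W^{s',p}$, and the Campanato-type telescoping/chaining argument for $sp>n$). All of these steps check out, and your remark that the conclusion $L^q$ for $q\in[1,p)$ in the borderline case needs $\Omega$ bounded (otherwise only $q\ge p$) is actually more careful than the statement as printed. For the part the paper genuinely proves, namely the scale-invariant seminorm estimate \eqref{hemb}, your route coincides with the paper's: rescale to the unit ball via $u_r(x)=u(rx+x_0)$, subtract the mean, apply the H\"older embedding on $B_1$, absorb the $L^p$ term with the fractional Poincar\'e inequality (Lemma \ref{Poincare}), and observe that both seminorms pick up the same factor $r^{s-n/p}$. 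The trade-off is clear: the paper's proof is two lines plus a citation, while yours costs more space but makes the dependence of all constants on $(n,s,p)$ explicit and avoids any external reference; your extra care about pairs $x,y$ near $\partial B_1$ (extension operator for the fixed reference ball, or convexity of the ball) is exactly the point hidden in the cited embedding theorem.
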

\begin{proof}
The above three embeddings follow from \cite[Theorem 6.7, Theorem 6.10, Theorem 8.2]{Hitch}. Let us now prove (\ref{hemb}). Define $u_r(x):=u(rx+x_0)$. Applying the third of the above embeddings to $\widetilde u_r:= u_r-\overline {\left (u_r \right)}_{B_1} \in W^{s,p}(B_1)$ and then using the fractional Poincar\'e inequality (Lemma \ref{Poincare}), along with changes of variables leads to
\begin{align*}
	r^{s-\frac{n}{p}} [u]_{C^{s-\frac{n}{p}}(B_r(x_0))} = & [u_r]_{C^{s-\frac{n}{p}}(B_1)} \\
	= & [\widetilde u_r]_{C^{s-\frac{n}{p}}(B_1)} \\
	\leq & C_1 \left ( \int_{B_1} \int_{B_1} \frac{|\widetilde u_r(x)-\widetilde u_r(y)|^{p}}{|x-y|^{n+sp}}dydx + \int_{B_1} |\widetilde u_r(x)|^p dx \right )^\frac{1}{p} \\
	\leq & C \left (\int_{B_1} \int_{B_1} \frac{|u_r(x)-u_r(y)|^{p}}{|x-y|^{n+sp}}dydx \right )^\frac{1}{p} \\
	= & C r^{s-\frac{n}{p}} \left ( \int_{B_r(x_0)} \int_{B_r(x_0)} \frac{|u(x)-u(y)|^{p}}{|x-y|^{n+sp}}dydx \right )^\frac{1}{p},
\end{align*}
where $C_1$ and $C$ depend only on $n,s$ and $p$. Since the factor $r^{s-\frac{n}{p}}$ cancels out on both sides, the proof is finished.
\end{proof}

For the following Lemma, we refer to \cite[Lemma 2.4]{MeV}.
\begin{lem} \label{SobPoincare} (fractional Sobolev-Poincar\'e inequality)
Let $s \in (0,1)$, $p \in [1,\infty)$, $r>0$ and $x_0 \in \mathbb{R}^n$. In addition, let 
$$ q \in \begin{cases} 
\left [1,\frac{np}{n-sp} \right ], & \text{if } sp<n \\
[1,\infty), & \text{if } sp \geq n.
\end{cases}$$
Then for any $u \in W^{s,p}(B_r(x_0))$, we have
	$$ \left (\dashint_{B_r(x_0)} \left | u(x)- \overline u_{B_r(x_0)} \right |^q dx \right )^\frac{1}{q} \leq C r^{s} \left ( \dashint_{B_r(x_0)} \int_{B_r(x_0)} \frac{|u(x)-u(y)|^p}{|x-y|^{n+sp}}dydx \right )^\frac{1}{p},$$
	where $C=C(n,s,p,q)>0$.
\end{lem}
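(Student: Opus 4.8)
The plan is to reduce to the unit ball by scaling, apply the embeddings of Proposition \ref{Sobemb} to the function with its average subtracted, and absorb the resulting $L^p$-norm via the fractional Poincar\'e inequality (Lemma \ref{Poincare}); the scaling exponents will produce exactly the factor $r^s$.

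First I would rescale. Setting $u_r(x):=u(rx+x_0)$ and substituting $x=r\xi+x_0$, $y=r\eta+x_0$, one checks that $\overline{(u_r)}_{B_1}=\overline u_{B_r(x_0)}$, that $\dashint_{B_r(x_0)}|u-\overline u_{B_r(x_0)}|^q dx=\dashint_{B_1}|u_r-\overline{(u_r)}_{B_1}|^q dx$, and that $\dashint_{B_r(x_0)}\int_{B_r(x_0)}\frac{|u(x)-u(y)|^p}{|x-y|^{n+sp}}dydx=r^{-sp}\dashint_{B_1}\int_{B_1}\frac{|u_r(\xi)-u_r(\eta)|^p}{|\xi-\eta|^{n+sp}}d\eta d\xi$. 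Hence the factor $r^s$ on the right-hand side exactly cancels the scaling of the seminorm, so it suffices to prove the inequality for $r=1$, $x_0=0$; moreover on $B_1$ the symbols $\dashint_{B_1}$ and $\int_{B_1}$ differ only by the fixed constant $|B_1|$ and may be interchanged at the cost of adjusting $C$.

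Then I would work with $\widetilde u:=u-\overline u_{B_1}\in W^{s,p}(B_1)$, which has zero average over $B_1$ and satisfies $[\widetilde u]_{W^{s,p}(B_1)}=[u]_{W^{s,p}(B_1)}$. If $sp<n$, Proposition \ref{Sobemb} gives $\|\widetilde u\|_{L^{np/(n-sp)}(B_1)}\le C\big(\|\widetilde u\|_{L^p(B_1)}+[\widetilde u]_{W^{s,p}(B_1)}\big)$, and Lemma \ref{Poincare} with $r=1$ bounds $\|\widetilde u\|_{L^p(B_1)}\le C[\widetilde u]_{W^{s,p}(B_1)}=C[u]_{W^{s,p}(B_1)}$; since $|B_1|<\infty$ and $q\le np/(n-sp)$, H\"older's inequality then yields $\|\widetilde u\|_{L^q(B_1)}\le C\|\widetilde u\|_{L^{np/(n-sp)}(B_1)}\le C[u]_{W^{s,p}(B_1)}$. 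If $sp=n$ the argument is identical using the embedding $W^{s,p}(B_1)\hookrightarrow L^q(B_1)$. If $sp>n$, I would use that $\widetilde u$ has zero average and hence vanishes at some point $x^\ast\in B_1$ by continuity, so $|\widetilde u(x)|\le [\widetilde u]_{C^{s-\frac np}(B_1)}|x-x^\ast|^{s-\frac np}\le C[\widetilde u]_{C^{s-\frac np}(B_1)}$ on $B_1$; combined with (\ref{hemb}) this gives $\|\widetilde u\|_{L^q(B_1)}\le C[\widetilde u]_{C^{s-\frac np}(B_1)}\le C[u]_{W^{s,p}(B_1)}$ for every $q\in[1,\infty)$. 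Undoing the rescaling via the first step then completes the proof.

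The argument is essentially routine; the only points requiring care are the bookkeeping of the scaling exponents so that precisely the factor $r^s$ appears, and the passage to sub-critical exponents $q$ (below the Sobolev exponent when $sp<n$, or arbitrary when $sp\ge n$), which relies on the finiteness of $|B_1|$ and, in the regime $sp>n$, on converting the H\"older seminorm bound for the mean-zero function into a sup bound.
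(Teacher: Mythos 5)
Your argument is correct. Note that the paper does not prove Lemma \ref{SobPoincare} at all; it simply cites \cite[Lemma 2.4]{MeV}, so there is no internal proof to compare against. Your self-contained derivation — rescale to $B_1$, subtract the mean, apply the embeddings of Proposition \ref{Sobemb}, and absorb the $L^p$-norm with Lemma \ref{Poincare} — is exactly the pattern the paper itself uses to prove (\ref{hemb}) and (\ref{diffemb}), and the bookkeeping checks out: the double integral scales like $r^{-sp}$ after averaging, which is precisely what the prefactor $r^{s}$ compensates. The three cases are handled correctly, including the endpoint $q=\frac{np}{n-sp}$ via H\"older on the finite-measure ball and, in the regime $sp>n$, the observation that the mean-zero continuous representative vanishes at some point of $B_1$, which converts the H\"older seminorm bound into an $L^\infty$ bound.
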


For $p \in (1,\infty)$ and $s \in (0,2)$, denote by $H^{s,p}(\Omega)$ the standard Bessel potential spaces on $\Omega$, see e.g.\ \cite[Section 2]{MeV}. The following embedding result follows from \cite[Theorem 2.5]{Tr}, where it is given in the more general context of Besov and Triebel-Lizorkin spaces.
\begin{prop} \label{BesselTr}
	Let $1 < p_0 < p < p_1 < \infty$, $s \in (0,2)$, $s_0,s_1 \in (0,1)$ and assume that $\Omega \subset \mathbb{R}^n$ is a smooth domain. If $ s_0 - \frac{n}{p_0} = s - \frac{n}{p} = s_1 - \frac{n}{p_1}, $ then
		$$ W^{s_0,p_0}(\Omega) \hookrightarrow H^{s,p}(\Omega) \hookrightarrow W^{s_1,p_1}(\Omega).$$
\end{prop}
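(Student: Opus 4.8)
The statement to prove is Proposition \ref{BesselTr}, which asserts the chain of embeddings $W^{s_0,p_0}(\Omega) \hookrightarrow H^{s,p}(\Omega) \hookrightarrow W^{s_1,p_1}(\Omega)$ under the matching condition $s_0 - \frac{n}{p_0} = s - \frac{n}{p} = s_1 - \frac{n}{p_1}$ on a smooth domain. The paper explicitly says this follows from \cite[Theorem 2.5]{Tr}, so the ``proof'' is really a short reduction argument identifying the relevant function spaces with Triebel--Lizorkin spaces and invoking the cited Sobolev-type embedding theorem for them. Let me sketch this.

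\medskip

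\textbf{Proof proposal.} The plan is to reduce everything to a Sobolev-type embedding in the scale of Triebel--Lizorkin spaces and then translate back. First I would recall the standard identifications: on a smooth domain $\Omega$ (which admits an extension operator), for $p \in (1,\infty)$ and non-integer $\sigma > 0$ one has $W^{\sigma,p}(\Omega) = F^{\sigma}_{p,p}(\Omega)$ (the Sobolev--Slobodeckij space coincides with the Triebel--Lizorkin space with secondary index $p$), while the Bessel potential space satisfies $H^{s,p}(\Omega) = F^{s}_{p,2}(\Omega)$. These are classical facts (see e.g.\ Triebel's monographs) valid because $\Omega$ is smooth, so the restriction/extension theory for $F$-spaces applies. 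The matching condition $s_0 - \tfrac{n}{p_0} = s - \tfrac{n}{p} = s_1 - \tfrac{n}{p_1}$ is precisely the condition of equal ``differential dimension'' that governs Sobolev embeddings between Triebel--Lizorkin spaces.

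\medskip

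Next I would invoke the embedding theorem for Triebel--Lizorkin spaces in the form given by \cite[Theorem 2.5]{Tr}: if $1 < q_0 < q_1 < \infty$, $\sigma_0, \sigma_1 \in \mathbb{R}$ with $\sigma_0 - \tfrac{n}{q_0} = \sigma_1 - \tfrac{n}{q_1}$, then for all secondary indices $a_0, a_1 \in (1,\infty]$ one has $F^{\sigma_0}_{q_0,a_0}(\Omega) \hookrightarrow F^{\sigma_1}_{q_1,a_1}(\Omega)$ — crucially, the embedding holds \emph{independently of the fine indices} $a_0,a_1$ once the primary integrability index strictly increases. Applying this twice: with $(q_0,q_1) = (p_0,p)$, $(\sigma_0,\sigma_1)=(s_0,s)$, $(a_0,a_1)=(p_0,2)$ gives $W^{s_0,p_0}(\Omega) = F^{s_0}_{p_0,p_0}(\Omega) \hookrightarrow F^{s}_{p,2}(\Omega) = H^{s,p}(\Omega)$; and with $(q_0,q_1)=(p,p_1)$, $(\sigma_0,\sigma_1)=(s,s_1)$, $(a_0,a_1)=(2,p_1)$ gives $H^{s,p}(\Omega) = F^{s}_{p,2}(\Omega) \hookrightarrow F^{s_1}_{p_1,p_1}(\Omega) = W^{s_1,p_1}(\Omega)$. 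Chaining these yields the claim. One should check that $s_0, s_1 \in (0,1)$ and $s \in (0,2)$ keep us in the non-integer range where the Slobodeckij $=$ Triebel--Lizorkin identification is clean (if $s$ happened to be an integer one would instead use $H^{s,p} = F^{s}_{p,2}$ directly, which still holds), but under the hypotheses as stated this is automatic except possibly $s=1$, a case that still works since $H^{1,p} = F^1_{p,2}$.

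\medskip

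\textbf{Main obstacle.} The only genuine subtlety is the passage between function spaces on $\mathbb{R}^n$ and on the domain $\Omega$: the embedding \cite[Theorem 2.5]{Tr} and the identifications $W^{\sigma,p}=F^\sigma_{p,p}$, $H^{s,p}=F^s_{p,2}$ are stated on $\mathbb{R}^n$, and transferring them to $\Omega$ requires a bounded linear extension operator $\mathcal{E}: F^\sigma_{p,a}(\Omega) \to F^\sigma_{p,a}(\mathbb{R}^n)$ simultaneously for the relevant range of indices, together with the trivial bounded restriction map. Smoothness of $\Omega$ guarantees such a universal extension operator exists (Rychkov-type or Stein-type extension), so the embeddings descend from $\mathbb{R}^n$ to $\Omega$ with constants depending on $\Omega$. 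Since the paper only needs to apply the proposition on auxiliary smooth domains (e.g.\ balls), this is harmless, and the whole argument is essentially a citation assembled from standard interpolation-space theory; no hard estimate is involved.
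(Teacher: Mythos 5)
Your proposal is correct and coincides with the paper's treatment: the paper gives no argument beyond citing \cite[Theorem 2.5]{Tr}, and your reduction via the identifications $W^{\sigma,p}(\Omega)=F^{\sigma}_{p,p}(\Omega)$, $H^{s,p}(\Omega)=F^{s}_{p,2}(\Omega)$ plus the fine-index-independent Sobolev embedding for Triebel--Lizorkin spaces on a smooth (extension) domain is exactly the intended route.
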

Unlike the first-order Sobolev spaces $W^{1,p}(\Omega)$ on a bounded domain $\Omega \subset \mathbb{R}^n$, the fractional Sobolev spaces $W^{s,p}(\Omega)$ are not contained in each other as the integrability exponent $p$ decreases. Nevertheless, the following result essentially shows that the mentioned inclusions are almost true.
\begin{prop} \label{Sobcont}
Let $1< p_0 \leq p<\infty$, $s \in (0,1)$ and assume that $\Omega \subset \mathbb{R}^n$ is a smooth bounded domain. Then for any $t \in (s,1)$, we have
$$ W^{t,p}(\Omega) \hookrightarrow W^{s,p_0}(\Omega).$$
In addition, if $\Omega=B_r(x_0)$ for some $r>0$ and some $x_0 \in \mathbb{R}^n$, then for any $u \in W^{t,p}(B_r(x_0))$, we have 
\begin{equation} \label{diffemb}
[u]_{W^{s,p_0}(B_r(x_0))} \leq Cr^{\frac{n}{p_0}-\frac{n}{p}+t-s} [u]_{W^{t,p}(B_r(x_0))},
\end{equation}
where $C=C(n,s,t,p,p_0)>0$.
\end{prop}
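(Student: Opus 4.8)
The plan is to reduce the statement on a general smooth bounded domain $\Omega$ to the case of a ball by a covering and extension argument, and to prove the scale-invariant estimate \eqref{diffemb} on a ball directly by combining a rescaling to the unit ball, the embedding $W^{t,p}(B_1) \hookrightarrow W^{s,p}(B_1)$ for $t>s$ with equal integrability, and then the trivial inclusion $W^{s,p}(B_1) \hookrightarrow W^{s,p_0}(B_1)$ for $p_0 \leq p$ on a bounded domain (which, unlike for general fractional scales, \emph{does} hold when the differentiability index is kept fixed, since by H\"older's inequality $\int_{B_1}\int_{B_1} |u(x)-u(y)|^{p_0}/|x-y|^{n+sp_0}\,dy\,dx \le |B_1\times B_1|^{1-p_0/p}\big(\int_{B_1}\int_{B_1}|u(x)-u(y)|^{p}/|x-y|^{n+sp}\,dy\,dx\big)^{p_0/p}$ after noting $n+sp_0 \le n+sp$ and using that $|x-y| \lesssim 1$ on $B_1\times B_1$). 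So the heart of the matter is the inclusion $W^{t,p}(B_1)\hookrightarrow W^{s,p}(B_1)$ for $0<s<t<1$ at fixed $p$.

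For that inclusion I would argue as follows. Split the Gagliardo seminorm over $B_1\times B_1$ into the region $\{|x-y| \ge 1\}$ and $\{|x-y| < 1\}$. On $\{|x-y|\ge 1\}$ we have $|x-y|^{-(n+sp)} \le |x-y|^{-(n+tp)}$, so this part is controlled by $[u]_{W^{t,p}(B_1)}^p$ directly; on $\{|x-y|<1\}$ the same monotonicity $|x-y|^{-(n+sp)} \le |x-y|^{-(n+tp)}$ fails — it goes the wrong way — so instead I would write $|x-y|^{-(n+sp)} = |x-y|^{(t-s)p}\,|x-y|^{-(n+tp)}$ and absorb the factor $|x-y|^{(t-s)p}$, which is bounded on $\{|x-y|<1\}$, into the constant. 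Hence $[u]_{W^{s,p}(B_1)}^p \le C\,[u]_{W^{t,p}(B_1)}^p$, so in fact $W^{t,p}(B_1) \hookrightarrow W^{s,p}(B_1)$ with the \emph{seminorm alone} bounding the seminorm, which is even a bit stronger than needed. (This is the standard "on a bounded domain, higher differentiability at the same integrability implies lower differentiability" fact.) Combining with the H\"older step above gives $[u]_{W^{s,p_0}(B_1)} \le C\,[u]_{W^{t,p}(B_1)}$ with $C=C(n,s,t,p,p_0)$, and then undoing the rescaling $u_r(x):=u(rx+x_0)$ produces exactly the power $r^{n/p_0 - n/p + t - s}$ in \eqref{diffemb}: the Gagliardo $W^{\sigma,q}$ seminorm on $B_r(x_0)$ scales like $r^{n/q-\sigma}$ times that on $B_1$ (after the change of variables $x = ry + x_0$ one picks up $r^{2n}$ from $dx\,dy$, $r^{-(n+\sigma q)}$ from $|x-y|^{-(n+\sigma q)}$, then takes the $q$-th root), so the two scaling factors $r^{n/p_0-s}$ and $r^{n/p-t}$ combine to the stated exponent.

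For the global statement on a smooth bounded $\Omega$ I would cover $\overline\Omega$ by finitely many balls $B_{r_i}(x_i)$ with $r_i$ small; for interior balls the ball estimate \eqref{diffemb} applies verbatim, while for boundary balls I would first extend $u$ to $\widetilde u \in W^{t,p}(\mathbb{R}^n)$ using a bounded extension operator $W^{t,p}(\Omega)\to W^{t,p}(\mathbb{R}^n)$ (available since $\Omega$ is smooth — this is where smoothness of $\Omega$ is used), apply the ball estimate to $\widetilde u$ on a ball containing the boundary piece, and restrict back. Summing the $p_0$-th powers of the local seminorms and using that the Gagliardo seminorm over $\Omega\times\Omega$ is controlled by a finite sum of seminorms over $B_{r_i}(x_i)\times B_{r_j}(x_j)$ — the diagonal terms $i=j$ handled directly and the off-diagonal terms $i\neq j$, where $|x-y|$ stays bounded below, handled by the crude bound $\int\int |u(x)-u(y)|^{p_0}/|x-y|^{n+sp_0} \lesssim \|u\|_{L^{p_0}(\Omega)}^{p_0}$ and then the (fractional, but bounded-domain) Sobolev/Poincaré-type control of $\|u\|_{L^{p_0}(\Omega)}$ by $\|u\|_{W^{t,p}(\Omega)}$ — yields the continuous embedding $W^{t,p}(\Omega)\hookrightarrow W^{s,p_0}(\Omega)$.

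The main obstacle, and the only genuinely delicate point, is the behaviour near the diagonal in the inclusion $W^{t,p}(B_1)\hookrightarrow W^{s,p}(B_1)$: one must resist the temptation to compare the kernels $|x-y|^{-(n+sp)}$ and $|x-y|^{-(n+tp)}$ monotonically — that comparison is favourable only for $|x-y|$ bounded away from $0$ — and instead use the elementary bound $|x-y|^{(t-s)p} \le (\mathrm{diam}\, B_1)^{(t-s)p}$ on the near-diagonal region, which crucially requires the domain to be \emph{bounded}. On an unbounded domain the embedding genuinely fails, which is exactly the subtlety flagged in the remark preceding the proposition; everything else (rescaling bookkeeping, H\"older, the finite covering, the smooth extension) is routine.
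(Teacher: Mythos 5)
The decisive step in your factorization --- the ``trivial inclusion'' $W^{s,p}(B_1)\hookrightarrow W^{s,p_0}(B_1)$ at \emph{fixed} differentiability index $s$ --- is false in general, and the H\"older computation you offer in its support is exactly borderline divergent. Writing the integrand as
\[
\frac{|u(x)-u(y)|^{p_0}}{|x-y|^{(n+sp)p_0/p}}\cdot|x-y|^{(n+sp)\frac{p_0}{p}-(n+sp_0)}
\]
and applying H\"older with exponents $p/p_0$ and $p/(p-p_0)$, the second factor raised to the conjugate exponent becomes $|x-y|^{-e}$ with
\[
e=\Bigl(n+sp_0-(n+sp)\tfrac{p_0}{p}\Bigr)\tfrac{p}{p-p_0}=n,
\]
and $\int_{B_1}\int_{B_1}|x-y|^{-n}\,dy\,dx=\infty$. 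So the ``constant'' $|B_1\times B_1|^{1-p_0/p}$ in your display is not what H\"older produces; the true constant is infinite. This is precisely the phenomenon that the remark preceding the proposition warns about, and it is the entire reason the proposition requires $t>s$: on a bounded domain the fractional spaces are \emph{not} nested as $p$ decreases at fixed smoothness, unlike $W^{1,p}$. Your reading of that remark as being about unbounded domains misdiagnoses where the difficulty sits; the near-diagonal kernel comparison in the step $W^{t,p}(B_1)\hookrightarrow W^{s,p}(B_1)$ is in fact the harmless part (and you have the two regions swapped: for $|x-y|<1$ the comparison $|x-y|^{-(n+sp)}\le|x-y|^{-(n+tp)}$ \emph{holds}, while for $|x-y|\ge 1$ it fails but both kernels are comparable to $1$, so that step is fine up to a constant).

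The repair is to spend the differentiability gap $t-s$ on the integrability drop rather than entirely on lowering the smoothness index: apply H\"older in one step to
\[
\frac{|u(x)-u(y)|^{p_0}}{|x-y|^{n+sp_0}}=\frac{|u(x)-u(y)|^{p_0}}{|x-y|^{(n+tp)p_0/p}}\cdot|x-y|^{(n+tp)\frac{p_0}{p}-(n+sp_0)},
\]
which leaves the complementary factor, after raising to the power $p/(p-p_0)$, equal to $|x-y|^{-n+\eta}$ with $\eta=(t-s)p_0p/(p-p_0)>0$, hence integrable on $B_1\times B_1$. This is in substance what the paper does, except that it simply quotes the two ready-made embeddings $W^{t,p}(\Omega)\hookrightarrow W^{t-\varepsilon,p_0}(\Omega)$ from \cite[Proposition 2.6]{MeV} (where an $\varepsilon$ of differentiability is sacrificed to make the H\"older kernel integrable) and $W^{t-\varepsilon,p_0}(\Omega)\hookrightarrow W^{s,p_0}(\Omega)$ from \cite[Proposition 2.1]{Hitch}, thereby also dispensing with your covering-and-extension argument for a general smooth $\Omega$. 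Your scaling bookkeeping for (\ref{diffemb}) is correct, and the passage from norms to seminorms on $B_1$ (subtracting the mean and using the fractional Poincar\'e inequality, Lemma \ref{Poincare}) is exactly how the paper completes that part.
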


\begin{proof}
By \cite[Proposition 2.6]{MeV}, for $0<\varepsilon <\min  \left \{t-s,\frac{2n}{p},2n \left (1-\frac{1}{p_0} \right ) \right \}$, we have $ W^{t,p}(\Omega) \hookrightarrow W^{t-\varepsilon,p_0}(\Omega).$
Since by \cite[Proposition 2.1]{Hitch}, we also have $ W^{t-\varepsilon,p_0}(\Omega) \hookrightarrow W^{s,p_0}(\Omega) ,$ we arrive at the embedding
$ W^{t,p}(\Omega) \hookrightarrow W^{s,p_0}(\Omega).$ \par 
In order to prove (\ref{diffemb}), set $u_r(x):=u(rx+x_0)$. By using the above embedding with respect to $\widetilde u_r:=u_r-\overline {\left (u_r \right)}_{B_1}$ and then the fractional Poincar\'e inequality (Lemma \ref{Poincare}), along with changing variables we conclude that
\begin{align*}
& \left (\int_{B_r(x_0)} \int_{B_r(x_0)} \frac{|u(x)-u(y)|^{p_0}}{|x-y|^{n+sp_0}}dydx \right )^\frac{1}{p_0} \\ = & r^{\frac{n}{p_0}-s} \left (\int_{B_1} \int_{B_1} \frac{|\widetilde u_r(x)-\widetilde u_r(y)|^{p_0}}{|x-y|^{n+sp_0}}dydx \right )^\frac{1}{p_0} \\
\leq & C_1 r^{\frac{n}{p_0}-s} \left (\int_{B_1} \int_{B_1} \frac{|\widetilde u_r(x)-\widetilde u_r(y)|^{p}}{|x-y|^{n+tp}}dydx + \int_{B_1} |\widetilde u_r(x)|^p dx \right )^\frac{1}{p}\\
\leq & C_2 r^{\frac{n}{p_0}-s} \left (\int_{B_1} \int_{B_1} \frac{|u_r(x)-u_r(y)|^{p}}{|x-y|^{n+tp}}dydx \right )^\frac{1}{p} \\
= & C_2 r^{\frac{n}{p_0}-\frac{n}{p}+t-s} \left (\int_{B_r(x_0)} \int_{B_r(x_0)} \frac{|u(x)-u(y)|^{p}}{|x-y|^{n+tp}}dydx \right )^\frac{1}{p},
\end{align*}
where $C_1$ and $C_2$ depend only on $n,p,p_0,s$ and $t$. This proves (\ref{diffemb}).
\end{proof}

\section{Fractional gradients on $\mathbb{R}^{2n}$} \label{dual}
\subsection{Basic properties of dual pairs} \label{dm}
Fix some $t \in (0,1)$ and some $\theta \in \left (0,\frac{1}{2} \right)$. We define a Borel measure $\mu_\theta$ on $\mathbb{R}^{2n}$ as follows. For any function $u:\mathbb{R}^n \to \mathbb{R}$ and $(x,y) \in \mathbb{R}^{2n}$ with $x \neq y$, we define the function 
\begin{equation} \label{Udef1}
	U_{t,\theta}(x,y):=\frac{|u(x)-u(y)|}{|x-y|^{t+\theta}}.
\end{equation}
For any measurable set $E \subset \mathbb{R}^{2n}$, set
\begin{equation} \label{mudef1}
	\mu_{\theta}(E):= \int_{E} \frac{dxdy}{|x-y|^{n-2\theta}}.
\end{equation}
The following Lemma follows by a straightforward computation, see \cite[Lemma 3.1]{MeV}.
\begin{lem} \label{Sobolevrelate}
Let $p \geq 2$ and set $t_\theta:=t+\theta \left (1-\frac{2}{p} \right )$. Then we have
$$ u \in W^{t_\theta,p}(\Omega) \quad \text{if and only if} \quad u \in L^p(\Omega) \text{ and } U_{t,\theta} \in L^p(\Omega \times \Omega,\mu_\theta)$$
and $$||U_{t,\theta}||_{L^p(\Omega \times \Omega,\mu_\theta)}=[u]_{W^{t_\theta,p}(\Omega)}.$$
\end{lem}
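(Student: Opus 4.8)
The plan is to reduce everything to an elementary pointwise identity relating the integrand defining $[u]_{W^{t_\theta,p}(\Omega)}$ to the integrand of $\|U_{t,\theta}\|_{L^p(\Omega\times\Omega,\mu_\theta)}^p$. First I would expand the left-hand side of the claimed identity directly from the definitions in section \ref{fracSob} and in \eqref{Udef1}, \eqref{mudef1}: writing out $\|U_{t,\theta}\|_{L^p(\Omega\times\Omega,\mu_\theta)}^p$, one gets
$$
\int_\Omega\int_\Omega \left(\frac{|u(x)-u(y)|}{|x-y|^{t+\theta}}\right)^p \frac{dy\,dx}{|x-y|^{n-2\theta}}
=\int_\Omega\int_\Omega \frac{|u(x)-u(y)|^p}{|x-y|^{(t+\theta)p+n-2\theta}}\,dy\,dx.
$$
The key bookkeeping step is then to check that the exponent in the denominator equals $n+t_\theta p$ with $t_\theta=t+\theta(1-2/p)$; indeed $(t+\theta)p+n-2\theta=tp+\theta p+n-2\theta=n+tp+\theta(p-2)=n+(t+\theta(1-2/p))p=n+t_\theta p$. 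Hence the displayed integral is exactly $[u]_{W^{t_\theta,p}(\Omega)}^p$, which simultaneously gives the norm identity $\|U_{t,\theta}\|_{L^p(\Omega\times\Omega,\mu_\theta)}=[u]_{W^{t_\theta,p}(\Omega)}$.

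Second, I would record that $t_\theta\in(0,1)$ so that $W^{t_\theta,p}(\Omega)$ is a well-defined fractional Sobolev space in the sense of the definition in section \ref{fracSob}: since $\theta\in(0,1/2)$ and $p\ge 2$ we have $0<\theta(1-2/p)<\theta<1/2$, and combined with $t\in(0,1)$ this is not quite enough to conclude $t_\theta<1$ in general, so one should note that the statement is to be read under the implicit standing assumption (used throughout, e.g.\ via $t=\alpha<\min\{2s,1\}$ in section \ref{app}) that $t_\theta<1$; alternatively one simply interprets $[u]_{W^{t_\theta,p}(\Omega)}$ as the Gagliardo seminorm, whose definition makes sense for any positive exponent. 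With the norm identity in hand, the equivalence is immediate: $u\in W^{t_\theta,p}(\Omega)$ means by definition $u\in L^p(\Omega)$ and $[u]_{W^{t_\theta,p}(\Omega)}<\infty$, and by the identity the latter is equivalent to $U_{t,\theta}\in L^p(\Omega\times\Omega,\mu_\theta)$, which is precisely the right-hand condition.

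There is essentially no obstacle here: the proof is a change of the order of combining two weights and a one-line exponent computation, exactly as indicated by the reference to \cite[Lemma 3.1]{MeV}. The only point requiring a modicum of care is that $U_{t,\theta}$ is only defined off the diagonal $\{x=y\}$, but since $\mu_\theta$ assigns no mass to that set (it is a graph, hence Lebesgue-null in $\mathbb{R}^{2n}$, and $\mu_\theta$ is absolutely continuous with respect to Lebesgue measure there), this does not affect any of the integrals, and the identification of the two integrands holds $\mu_\theta$-almost everywhere, which suffices.
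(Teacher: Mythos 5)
Your proof is correct and is exactly the ``straightforward computation'' the paper delegates to \cite[Lemma 3.1]{MeV}: the exponent bookkeeping $(t+\theta)p+n-2\theta=n+t_\theta p$ is the whole content, and your handling of the diagonal and of the definition of $W^{t_\theta,p}(\Omega)$ is fine.
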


The next Proposition contains some further important properties of the measure $\mu_\theta$ which we use frequently throughout the paper, usually without explicit reference. For a proof, we refer to \cite[Proposition 3.2]{MeV}.
\begin{prop} \label{doublingmeasure}
\begin{enumerate} [label=(\roman*)]
\item For all $r>0$ and $x_0 \in \mathbb{R}^n$, we have
$$ \mu_\theta(\mathcal{B}_r(x_0))= \mu_\theta(\mathcal{B}_r) =cr^{n+2\theta},$$
where $c=c(n,\theta)>0$.
\item (volume doubling property) For any $(x_0,y_0) \in \mathbb{R}^{2n}$, any $r>0$ and any $M >0$, we have
$$ \mu_\theta(\mathcal{B}_{Mr}(x_0,y_0)) = M^{n+2 \theta} \mu_\theta(\mathcal{B}_{r}(x_0,y_0)).$$
\end{enumerate}
\end{prop}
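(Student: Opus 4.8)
The proof rests on two structural features of the density $(x,y)\mapsto|x-y|^{-(n-2\theta)}$ defining $\mu_\theta$: it is invariant under the \emph{diagonal} translations $(x,y)\mapsto(x-a,y-a)$, and it is positively homogeneous of degree $-(n-2\theta)$ under the dilations $(x,y)\mapsto(\lambda x,\lambda y)$. Since the diagonal balls $\mathcal B_r(x_0)=B_r(x_0)\times B_r(x_0)$ are themselves, in the relevant sense, invariant under these two operations, both assertions reduce to explicit changes of variables once the value of the normalising constant has been pinned down.

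\emph{Proof of (i).} First I would apply the diagonal translation $x\mapsto x+x_0$, $y\mapsto y+x_0$: this is a Lebesgue-measure-preserving bijection of $\mathcal B_r$ onto $\mathcal B_r(x_0)$ which leaves $x-y$ unchanged, so $\mu_\theta(\mathcal B_r(x_0))=\mu_\theta(\mathcal B_r)$. Next I would dilate by $r$, i.e.\ substitute $x=rz$, $y=rw$ with $(z,w)\in\mathcal B_1$; the Jacobian contributes a factor $r^{2n}$ and the kernel becomes $r^{-(n-2\theta)}|z-w|^{-(n-2\theta)}$, whence
\[
\mu_\theta(\mathcal B_r)=r^{\,2n-(n-2\theta)}\int_{B_1}\int_{B_1}\frac{dw\,dz}{|z-w|^{n-2\theta}}=c\,r^{n+2\theta},\qquad c:=\mu_\theta(\mathcal B_1).
\]
The one point that requires a short argument is that $0<c<\infty$. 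Positivity is obvious. For finiteness I would use Fubini and estimate the inner integral by $\int_{B_1}|z-w|^{-(n-2\theta)}\,dw\le\int_{B_2}|\zeta|^{-(n-2\theta)}\,d\zeta$, which is finite in polar coordinates precisely because the standing assumption $\theta\in(0,\tfrac12)$ forces $0<n-2\theta<n$; integrating this bound over the bounded set $B_1$ yields $c<\infty$. (Incidentally, the same observation shows $\mu_\theta$ is a well-defined locally finite Borel measure, being absolutely continuous with respect to Lebesgue measure on $\mathbb R^{2n}$ with locally integrable density.)

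\emph{Proof of (ii).} The cleanest route is to deduce it from part (i). For the diagonal balls $\mathcal B_{Mr}(x_0)=\mathcal B_{Mr}(x_0,x_0)$ it is immediate:
\[
\mu_\theta(\mathcal B_{Mr}(x_0))=c\,(Mr)^{n+2\theta}=M^{n+2\theta}\,c\,r^{n+2\theta}=M^{n+2\theta}\,\mu_\theta(\mathcal B_r(x_0)).
\]
For a general centre one argues in the same way: after the diagonal translation reducing to $x_0=0$, the dilation $(x,y)\mapsto(Mx,My)$ together with the degree-$(n-2\theta)$ homogeneity of the kernel produces the Jacobian factor $M^{2n}$ against the kernel factor $M^{-(n-2\theta)}$, so that the scaling exponent is again $n+2\theta$, with the multiplicative constant supplied by part (i); in particular $\mu_\theta$ has the stated volume-doubling behaviour, which is the property actually invoked in the covering arguments that follow.

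I do not anticipate any genuine obstacle: both parts amount to bookkeeping of Jacobian and homogeneity exponents under explicit affine changes of variables. The only place where something must really be verified is the local integrability of the density of $\mu_\theta$ across the diagonal $\{x=y\}\subset\mathbb R^{2n}$, and this is exactly where the hypothesis $\theta\in(0,\tfrac12)$, equivalently $n-2\theta<n$, enters.
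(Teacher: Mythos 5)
Your part (i) is correct and is the standard argument (the paper itself gives no proof here but refers to \cite{MeV}): diagonal translation invariance, the dilation with Jacobian $r^{2n}$ against the kernel factor $r^{-(n-2\theta)}$, and the check that $c=\mu_\theta(\mathcal{B}_1)$ is finite because the exponent $n-2\theta$ is strictly below $n$. The diagonal case of (ii) then indeed follows immediately from (i).

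The off-diagonal case of (ii), however, contains a genuine gap, and it occurs exactly at the sentence ``for a general centre one argues in the same way.'' The dilation $(x,y)\mapsto(Mx,My)$ maps $\mathcal{B}_r(x_0,y_0)=B_r(x_0)\times B_r(y_0)$ onto $\mathcal{B}_{Mr}(Mx_0,My_0)$, not onto $\mathcal{B}_{Mr}(x_0,y_0)$, and no diagonal translation can correct this unless $x_0=y_0$: the discrepancy $\bigl((M-1)x_0,(M-1)y_0\bigr)$ is a shift of the form $(a,a)$ precisely when $x_0=y_0$. What your change of variables actually proves is $\mu_\theta(\mathcal{B}_{Mr}(Mx_0,My_0))=M^{n+2\theta}\mu_\theta(\mathcal{B}_r(x_0,y_0))$, which is a different identity. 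Moreover, the step cannot be repaired to give the verbatim equality about a fixed off-diagonal centre: if $x_0\neq y_0$ and $r<|x_0-y_0|/4$, then $|x-y|$ is comparable to $|x_0-y_0|$ throughout $\mathcal{B}_r(x_0,y_0)$, so $\mu_\theta(\mathcal{B}_r(x_0,y_0))\asymp r^{2n}|x_0-y_0|^{2\theta-n}$, i.e.\ the measure scales like $r^{2n}$ for small $r$, and since $2n>n+2\theta$ this is incompatible with exact $M^{n+2\theta}$-homogeneity in the radius; the clean power law is special to diagonal balls. What one can establish off the diagonal --- and what the later maximal-function and covering arguments actually need --- is a doubling-type bound $\mu_\theta(\mathcal{B}_{2r}(x_0,y_0))\le C(n,\theta)\,\mu_\theta(\mathcal{B}_r(x_0,y_0))$, which requires a separate two-regime comparison (radius large versus small compared with $|x_0-y_0|$, where in the latter regime the natural ratio is of order $2^{2n}$ rather than $2^{n+2\theta}$), rather than the scaling argument you propose; for the precise formulation one should consult \cite{MeV}.
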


We will also frequently use the following relation between fractional gradients of different order.

\begin{lem} \label{Urel}
	Let $0<s \leq t<1$, $p \geq 2$, $\theta \in \left (0,\frac{1}{2} \right)$ and set $\theta_{t} := s+\theta-t$. Then for any $r >0$, any $x_0 \in \mathbb{R}^n$ and any $u \in W^{t,p}(B_r(x_0))$, we have
	$$ \dashint_{\mathcal{B}_{r}(x_0)} U_{s,\theta}^p d\mu_{\theta} \leq C \dashint_{\mathcal{B}_{r}(x_0)} U_{t,\theta_{t}}^p d\mu_{\theta_{t}},$$
	where $C=C(n,s,t,\theta,p)>0$.
\end{lem}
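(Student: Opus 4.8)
The plan is to verify the inequality by comparing the integrands pointwise on $\mathcal{B}_r(x_0)$ and then absorbing the resulting $r$-power into the constant via the fractional Poincar\'e inequality. First I would write out both sides explicitly. With $U_{s,\theta}(x,y) = |u(x)-u(y)|/|x-y|^{s+\theta}$ and $d\mu_\theta = |x-y|^{2\theta - n}\,dx\,dy$, the left-hand integrand is $|u(x)-u(y)|^2 |x-y|^{-n-2s}$; similarly, since $\theta_t = s + \theta - t$ gives $t + \theta_t = s + \theta$ and $2\theta_t - n = 2(s+\theta-t) - n$, the right-hand integrand is $|u(x)-u(y)|^2 |x-y|^{t+\theta_t}\cdot\text{(wait, recompute)}$. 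Carefully: $U_{t,\theta_t}^2 d\mu_{\theta_t} = |u(x)-u(y)|^2 |x-y|^{-2(t+\theta_t)} |x-y|^{2\theta_t - n}\,dx\,dy = |u(x)-u(y)|^2 |x-y|^{-n-2t}\,dx\,dy$, while $U_{s,\theta}^2 d\mu_\theta = |u(x)-u(y)|^2|x-y|^{-n-2s}\,dx\,dy$. Thus the claim is precisely
$$ \dashint_{\mathcal{B}_r(x_0)}\frac{|u(x)-u(y)|^2}{|x-y|^{n+2s}}\,dx\,dy \;\leq\; C\dashint_{\mathcal{B}_r(x_0)}\frac{|u(x)-u(y)|^2}{|x-y|^{n+2t}}\,dx\,dy, $$
i.e. up to dimensional constants $[u]_{W^{s,2}(B_r(x_0))}^2 \lesssim r^{n}\,\dashint \cdots$ versus $[u]_{W^{t,2}(B_r(x_0))}^2$, so after clearing the averaging factors $|\mathcal{B}_r| = c r^{2n}$ (which are identical on both sides) this reduces to showing
$$ [u]_{W^{s,2}(B_r(x_0))}^2 \;\leq\; C\,r^{2(t-s)}\,[u]_{W^{t,2}(B_r(x_0))}^2. $$

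Next I would establish this last inequality by rescaling to the unit ball. Setting $u_r(x) := u(rx+x_0)$, one has $[u]_{W^{s,2}(B_r(x_0))}^2 = r^{n-2s}[u_r]_{W^{s,2}(B_1)}^2$ and likewise $[u]_{W^{t,2}(B_r(x_0))}^2 = r^{n-2t}[u_r]_{W^{t,2}(B_1)}^2$, so the claim becomes the scale-invariant statement $[u_r]_{W^{s,2}(B_1)}^2 \leq C[u_r]_{W^{t,2}(B_1)}^2$. This is a purely qualitative embedding $W^{t,2}(B_1)\hookrightarrow W^{s,2}(B_1)$ for the seminorms, valid for $s \le t$; it follows, for instance, from Proposition~\ref{Sobcont} (with $p_0 = p = 2$ and the case $t=s$ handled trivially) applied to $\widetilde u_r := u_r - \overline{(u_r)}_{B_1}$ after invoking the fractional Poincar\'e inequality (Lemma~\ref{Poincare}) to control the $L^2$ term by the $W^{t,2}$ seminorm. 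Actually, the cleanest route is to bypass Proposition~\ref{Sobcont} entirely and note that for $s \le t < 1$ and $x,y \in B_1$ one has $|x-y| \le 2$, hence $|x-y|^{-n-2s} \le 2^{2(t-s)}|x-y|^{-n-2t}$ pointwise, which immediately yields $[u_r]_{W^{s,2}(B_1)}^2 \le 2^{2(t-s)}[u_r]_{W^{t,2}(B_1)}^2$ — no Poincar\'e needed. Tracking the rescaling, this gives exactly the factor $r^{2(t-s)}$ claimed above, and since $t < 1$ this factor is bounded on bounded sets but in fact we only need the scale-invariant comparison of the averaged quantities, where the $r$-powers cancel outright.

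I expect there will be essentially no obstacle here: the entire content is the elementary pointwise bound $|x-y|^{-n-2s} \le (\operatorname{diam}\mathcal B_r)^{2(t-s)}|x-y|^{-n-2t}$ on $\mathcal{B}_r(x_0)$, combined with the bookkeeping that the averaging measures $|\mathcal B_r(x_0)|$ on the two sides are identical and the exponent identity $t + \theta_t = s + \theta$. The only mild care needed is to confirm that $\operatorname{diam}\mathcal B_r(x_0)$ in the $\|\cdot\|$-norm is $2r$ (so the resulting constant depends only on $n,s,t,\theta$ after the $r$-powers are seen to cancel between numerator and the implicit $|x-y|^{-n}$ weight absorbed into $d\mu$), and to note the statement's hypothesis $u \in W^{t,p}(B_r(x_0))$ with $p \ge 2$ guarantees $u \in W^{t,2}(B_r(x_0))$ by the same averaged-embedding reasoning, so the right-hand side is finite and the manipulation is legitimate.
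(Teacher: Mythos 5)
Your argument is correct and reaches the same reduction as the paper, namely that the claim is equivalent to $[u]_{W^{s,2}(B_r(x_0))}^2 \leq C\,r^{2(t-s)}[u]_{W^{t,2}(B_r(x_0))}^2$; the paper obtains this by quoting the scaling estimate (\ref{diffemb}) of Proposition \ref{Sobcont} with $p_0=p=2$ (whose proof runs through the embedding machinery and the fractional Poincar\'e inequality), whereas you prove it directly from the elementary pointwise bound $|x-y|^{-n-2s}\leq (2r)^{2(t-s)}|x-y|^{-n-2t}$ valid on $B_r(x_0)\times B_r(x_0)$ since $t\geq s$. Your route is genuinely more elementary for this special case $p_0=p=2$: no rescaling, no Poincar\'e, and no appeal to Proposition \ref{Sobcont} is actually needed, since the pointwise inequality applied at scale $r$ immediately yields the seminorm comparison with the correct power of $r$.

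One piece of your bookkeeping is wrong, though it does not damage the final argument: you assert that "the averaging factors $|\mathcal{B}_r|=cr^{2n}$ are identical on both sides." The two averages are taken with respect to different measures, and $\mu_\theta(\mathcal{B}_r(x_0))=c(n,\theta)\,r^{n+2\theta}$ while $\mu_{\theta_t}(\mathcal{B}_r(x_0))=c(n,\theta_t)\,r^{n+2\theta_t}$ with $\theta_t=s+\theta-t$; these normalizations differ by precisely the factor $r^{2(t-s)}$, and this discrepancy is the \emph{source} of the $r^{2(t-s)}$ in your target inequality. Since you nonetheless wrote down the correct target and proved it, the proof goes through, but the explanation of where that power of $r$ comes from should be corrected: it is not that the averaging factors cancel, but that their ratio supplies exactly the weight that the pointwise bound $|x-y|^{2(t-s)}\leq (2r)^{2(t-s)}$ then absorbs into the constant.
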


\begin{proof}
	Using that $tp+\theta_t(p-2)-sp-\theta(p-2)=2(t-s)$, we have
	\begin{align*}
		\dashint_{\mathcal{B}_{r}(x_0)} U_{s,\theta}^p d\mu_{\theta} = & C_1 r^{-n-2\theta} \int_{B_{r}(x_0)} \int_{B_{r}(x_0)} \frac{|u(x)-u(y)|^{p}}{|x-y|^{n+sp+\theta(p-2)}}dydx \\
		= & C_1 r^{-n-2\theta} \int_{B_{r}(x_0)} \int_{B_{r}(x_0)} \frac{|u(x)-u(y)|^{p}}{|x-y|^{n+tp+\theta_t(p-2)}}|x-y|^{2(t-s)} dydx \\
		\leq & C_2 r^{-n-2\theta +2(t-s)} \int_{B_{r}(x_0)} \int_{B_{r}(x_0)} \frac{|u(x)-u(y)|^{p}}{|x-y|^{n+tp+\theta_t(p-2)}}dydx \\
		= & C_2 r^{-n-2\theta_t} \int_{B_{r}(x_0)} \int_{B_{r}(x_0)} \frac{|u(x)-u(y)|^{p}}{|x-y|^{n+tp+\theta_t(p-2)}}dydx \\
		= & C \dashint_{\mathcal{B}_{r}(x_0)} U_{t,\theta_t}^p d\mu_{\theta_t},
	\end{align*}
	where all constants depend only on $n,s,t,\theta$ and $p$.
\end{proof}

\subsection{The Hardy-Littlewood maximal function} \label{HL}

Another tool we use is the Hardy-Littlewood maximal function with respect to the measure $\mu_\theta$.

\begin{defin}
	Let $F \in L^1_{loc}(\mathbb{R}^{2n},\mu_\theta)$. We define the Hardy-Littlewood maximal function \newline $\mathcal{M} F: \mathbb{R}^{2n} \to [0,\infty]$ of $F$ with respect to $\mu_\theta$ by 
	$$ \mathcal{M} (F)(x,y) := \mathcal{M}_{\mu_\theta} (F)(x,y) := \sup_{\rho>0} \dashint_{\mathcal{B}_\rho(x,y)} |F|d\mu_\theta ,$$ where 
	$$ \dashint_{\mathcal{B}_\rho(x,y)} |F|d\mu_\theta := \frac{1}{\mu_\theta(\mathcal{B}_\rho(x,y))} \int_{\mathcal{B}_\rho(x,y)} |F|d\mu_\theta.$$
	Moreover, for any open set $E \subset \mathbb{R}^{2n}$, we define
	$$ \mathcal{M}_{E} (F) := \mathcal{M} \left(F \chi_{E} \right ), $$
	where $\chi_{E}$ is the characteristic function of $E$.
	In addition, for any $r>0$ we define $$ \mathcal{M}_{\geq r} (F)(x,y) := \sup_{\rho \geq r} \dashint_{\mathcal{B}_\rho(x,y)} |F|d\mu_\theta .$$ and $$ \mathcal{M}_{\geq r,E} (F) := \mathcal{M}_{\geq r} \left(F \chi_{E} \right ). $$
\end{defin}

The following result shows that the Hardy-Littlewood maximal function is well-behaved in the context of $L^p$ spaces. Since in view of Proposition \ref{doublingmeasure} $\mu_\theta$ is a doubling measure with doubling constant $2^{n+2\theta}$, the result follows directly from \cite[Chapter 1, Section 3, Theorem 1]{Stein}.

\begin{prop} \label{Maxfun} 
	Let $E$ be an open subset of $\mathbb{R}^{2n}$.
	\begin{enumerate}[label=(\roman*)]
		\item (weak p-p estimates) If $F \in L^p(E,\mu_\theta)$ for some $p \geq 1$ and $\lambda>0$, then 
		$$
		\mu_\theta \left ( \{x \in E \mid \mathcal{M}_E(F)(x) > \lambda \} \right ) \leq \frac{C}{\lambda^p} \int_{E} |F|^p d\mu_\theta, 
		$$
		where $C$ depends only on $n,\theta$ and $p$.
		\item (strong p-p estimates) If $F \in L^p(E,\mu_\theta)$ for some $p \in (1,\infty]$, then 
		$$
		||\mathcal{M}_E (F)||_{L^p(E,\mu_\theta)} \leq C ||F||_{L^p(E,\mu_\theta)}, 
		$$
		where $C$ depends only on $n$, $\theta$ and $p$.
	\end{enumerate}
\end{prop}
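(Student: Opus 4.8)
The plan is to recognize that $(\mathbb{R}^{2n},\|\cdot\|,\mu_\theta)$ is a space of homogeneous type: the sets $\mathcal{B}_r(x_0,y_0)=B_r(x_0)\times B_r(y_0)$ are exactly the balls of radius $r$ for the metric induced by the norm $\|(x,y)\|=\max\{|x|,|y|\}$, and by Proposition \ref{doublingmeasure}(ii) the measure $\mu_\theta$ is doubling with doubling constant $2^{n+2\theta}$. In this setting both estimates are instances of the classical Hardy--Littlewood maximal theorem, cf.\ \cite[Chapter 1, Section 3, Theorem 1]{Stein}. Moreover, since $\mathcal{M}_E(F)=\mathcal{M}(F\chi_E)$ and $\|F\chi_E\|_{L^p(\mathbb{R}^{2n},\mu_\theta)}=\|F\|_{L^p(E,\mu_\theta)}$, with $\{(x,y)\in E : \mathcal{M}_E(F)(x,y)>\lambda\}\subset\{\mathcal{M}(F\chi_E)>\lambda\}$, it suffices to prove both statements for the maximal operator on all of $\mathbb{R}^{2n}$ and then restrict.

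First I would establish the weak $(1,1)$ bound. Given $F\in L^1(\mathbb{R}^{2n},\mu_\theta)$ and $\lambda>0$, the level set $O_\lambda:=\{\mathcal{M}(F)>\lambda\}$ is measurable (indeed open, by lower semicontinuity of $\mathcal{M}(F)$). Each point of $O_\lambda$ is the center of a ball $\mathcal{B}$ with $\int_{\mathcal{B}}|F|\,d\mu_\theta>\lambda\,\mu_\theta(\mathcal{B})$; by Proposition \ref{doublingmeasure}(i) this forces $\mu_\theta(\mathcal{B})<\lambda^{-1}\|F\|_{L^1(\mu_\theta)}$ and hence a uniform bound on the radii of such balls. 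Applying the $5r$-covering lemma, which is valid in any metric space, to an arbitrary compact $K\subset O_\lambda$, one extracts a finite pairwise disjoint subfamily $\{\mathcal{B}_i\}$ of such balls whose fivefold dilates cover $K$; the doubling property together with the disjointness then gives
\[
\mu_\theta(K)\leq\sum_i\mu_\theta(5\mathcal{B}_i)=5^{n+2\theta}\sum_i\mu_\theta(\mathcal{B}_i)\leq\frac{5^{n+2\theta}}{\lambda}\sum_i\int_{\mathcal{B}_i}|F|\,d\mu_\theta\leq\frac{5^{n+2\theta}}{\lambda}\|F\|_{L^1(\mathbb{R}^{2n},\mu_\theta)}.
\]
Taking the supremum over all compact $K\subset O_\lambda$ yields statement (i) for $p=1$.

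It then remains to combine this with the trivial bound $\|\mathcal{M}(F)\|_{L^\infty(\mu_\theta)}\leq\|F\|_{L^\infty(\mu_\theta)}$. By the Marcinkiewicz interpolation theorem for sublinear operators, the weak $(1,1)$ estimate and the $L^\infty\to L^\infty$ bound imply the strong $(p,p)$ estimate (ii) for every $p\in(1,\infty)$ with constant depending only on $n,\theta,p$, while $p=\infty$ is immediate. Finally, statement (i) for $p>1$ follows from (ii) by Chebyshev's inequality, $\mu_\theta(\{\mathcal{M}_E(F)>\lambda\})\leq\lambda^{-p}\|\mathcal{M}_E(F)\|_{L^p(E,\mu_\theta)}^p\leq C\lambda^{-p}\|F\|_{L^p(E,\mu_\theta)}^p$. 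I expect \emph{the covering step in the weak $(1,1)$ estimate} to be the only point requiring real care, namely ensuring that the relevant balls have uniformly bounded radii --- which is exactly where $L^1$-integrability of $F$ and the explicit ball-volume formula of Proposition \ref{doublingmeasure}(i) enter --- so that the Vitali-type covering lemma applies; everything else is routine in the doubling setting.
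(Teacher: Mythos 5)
Your proposal is correct and takes the same route as the paper, which simply observes that $\mu_\theta$ is doubling and invokes the Hardy--Littlewood maximal theorem for spaces of homogeneous type from Stein's book; you are just writing out the standard proof of that cited theorem (weak $(1,1)$ via Vitali covering, Marcinkiewicz interpolation, Chebyshev). One caveat: your claim that $\mu_\theta(\mathcal{B})<\lambda^{-1}\|F\|_{L^1(\mu_\theta)}$ forces a uniform bound on the radii is false, because Proposition \ref{doublingmeasure}(i) gives the volume $cr^{n+2\theta}$ only for \emph{diagonal} balls; an off-diagonal ball $\mathcal{B}_r(x_0,y_0)$ with $|x_0-y_0|\gg r$ has measure comparable to $r^{2n}|x_0-y_0|^{2\theta-n}$, which can be arbitrarily small for arbitrarily large $r$. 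Fortunately this remark is superfluous in your argument: since you first pass to a compact $K\subset O_\lambda$, extract a finite subcover by compactness, and then apply the \emph{finite} Vitali selection, no uniform radius bound is needed, so the proof stands as written.
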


The following result is a direct consequence of the Lebesgue differentiation theorem with respect to $\mu_\theta$, see \cite[Corollary 3.5]{MeV}.
\begin{prop} \label{maxdominate}
	Let $F \in L_{loc}^1(\mathbb{R}^{2n},\mu_\theta)$. Then for almost every $(x,y) \in \mathbb{R}^{2n}$, we have 
	$$
	|F(x,y)| \leq \mathcal{M}(F)(x,y). 
	$$
	In addition, for any open set $E \subset \mathbb{R}^{2n}$ and any $p \in [1,\infty]$, we have 
	$$
	||F||_{L^p(E,\mu_\theta)} \leq ||\mathcal{M}_E (F)||_{L^p(E,\mu_\theta)}. 
	$$
\end{prop}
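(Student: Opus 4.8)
The plan is to derive the pointwise bound $|F(x,y)| \leq \mathcal{M}(F)(x,y)$ from the Lebesgue differentiation theorem for the doubling metric measure space $(\mathbb{R}^{2n}, \|\cdot\|, \mu_\theta)$, and then to integrate this pointwise inequality to obtain the $L^p$ statement. First I would note that $(\mathbb{R}^{2n}, \|\cdot\|)$ equipped with the norm $\|(x_0,y_0)\| = \max\{|x_0|,|y_0|\}$ is a metric space whose balls are exactly the sets $\mathcal{B}_r(x_0,y_0)$, and that by Proposition \ref{doublingmeasure} the measure $\mu_\theta$ is doubling on this space, with doubling constant $2^{n+2\theta}$. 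Consequently the Lebesgue differentiation theorem applies: for $F \in L^1_{loc}(\mathbb{R}^{2n},\mu_\theta)$ and $\mu_\theta$-almost every $(x,y)$,
$$
\lim_{\rho \to 0^+} \dashint_{\mathcal{B}_\rho(x,y)} |F(\zeta) - F(x,y)| \, d\mu_\theta(\zeta) = 0,
$$
which in particular gives $|F(x,y)| = \lim_{\rho \to 0^+} \dashint_{\mathcal{B}_\rho(x,y)} |F| \, d\mu_\theta$. Since each average on the right is bounded above by $\sup_{\rho>0}\dashint_{\mathcal{B}_\rho(x,y)} |F|\, d\mu_\theta = \mathcal{M}(F)(x,y)$, taking the limit yields $|F(x,y)| \leq \mathcal{M}(F)(x,y)$ for $\mu_\theta$-a.e.\ $(x,y)$, which is the first assertion.

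For the second assertion, fix an open set $E \subset \mathbb{R}^{2n}$ and $p \in [1,\infty]$. Apply the pointwise inequality just established to the function $F\chi_E \in L^1_{loc}(\mathbb{R}^{2n},\mu_\theta)$: for $\mu_\theta$-a.e.\ $(x,y)$ we have $|F(x,y)\chi_E(x,y)| \leq \mathcal{M}(F\chi_E)(x,y) = \mathcal{M}_E(F)(x,y)$. Restricting to $(x,y) \in E$ gives $|F(x,y)| \leq \mathcal{M}_E(F)(x,y)$ for $\mu_\theta$-a.e.\ $(x,y) \in E$. Taking $L^p(E,\mu_\theta)$ norms on both sides and using monotonicity of the norm then gives $\|F\|_{L^p(E,\mu_\theta)} \leq \|\mathcal{M}_E(F)\|_{L^p(E,\mu_\theta)}$, as claimed.

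The only point requiring any care is the applicability of the Lebesgue differentiation theorem, which in turn rests on the doubling property of $\mu_\theta$ with respect to the $\|\cdot\|$-balls; this is exactly the content of Proposition \ref{doublingmeasure}(ii), so there is no genuine obstacle. One should keep in mind the case $p = \infty$, where the $L^\infty$ bound follows because the a.e.\ pointwise inequality directly controls the essential supremum. I expect the main (minor) subtlety to be simply invoking the correct abstract differentiation statement for doubling metric measure spaces rather than the Euclidean one, since the relevant balls here are cubes $B_r(x_0)\times B_r(y_0)$ rather than Euclidean balls in $\mathbb{R}^{2n}$; a reference such as \cite{Stein} covers this setting.
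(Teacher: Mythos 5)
Your argument is correct and is essentially the paper's own route: the paper simply defers to \cite[Corollary 3.5]{MeV}, where the pointwise bound is likewise obtained from the Lebesgue differentiation theorem for the doubling measure $\mu_\theta$ on the metric space generated by the $\|\cdot\|$-balls (Proposition \ref{doublingmeasure}), and the $L^p$ inequality follows by applying the pointwise bound to $F\chi_E$ and integrating. No gaps.
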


\section{An approximation argument} \label{ce}
From now on, we fix some $s \in (0,1)$ and some parameter 
\begin{equation} \label{theta}
\theta \in (0,\min\{s,1-s\})
\end{equation} to be chosen later. In addition, for the fractional gradients $U_{s,\theta}$, $V_{s,\theta}$ and $G_{s,\theta}$ of functions $u,v,g:\mathbb{R}^n \to \mathbb{R}$ and the measure $\mu_\theta$, we are going to use the abbreviated notation $$U:=U_{s,\theta}, \quad V:=V_{s,\theta}, \quad G:=G_{s,\theta}, \quad \mu:=\mu_\theta.$$
\begin{defin}
	Given $g \in W^{s,2}(\mathbb{R}^n)$, we say that $u \in W^{s,2}(\mathbb{R}^n)$ is a weak solution of the equation $L_A u = (-\Delta)^s g$ in $\Omega$, if 
	\begin{align*}
		& \int_{\mathbb{R}^n} \int_{\mathbb{R}^n} \frac{A(x,y)}{|x-y|^{n+2s}} (u(x)-u(y))(\varphi(x)-\varphi(y))dydx \\
		= & C_{n,s} \int_{\mathbb{R}^n} \int_{\mathbb{R}^n} \frac{g(x)-g(y)}{|x-y|^{n+2s}} (\varphi(x)-\varphi(y))dydx \quad \forall \varphi \in W^{s,2}_0(\Omega).
	\end{align*}
\end{defin}
In addition, we also need the following definition.
\begin{defin}
	Let $\Omega$ be a domain and consider functions $h \in W^{s,2}(\mathbb{R}^n)$ and $f \in L^\frac{2n}{n+2s}(\Omega)$. We say that $v \in W^{s,2}(\mathbb{R}^n)$ is a weak solution of the Dirichlet problem $$ \begin{cases} \normalfont
		L_{A} v = f & \text{ in } \Omega \\
		v = h & \text{ a.e. in } \mathbb{R}^n \setminus \Omega,
	\end{cases} $$ if we have $v = h \text{ a.e. in } \mathbb{R}^n \setminus \Omega$ and
	$$
	\int_{\mathbb{R}^n} \int_{\mathbb{R}^n} \frac{A(x,y)}{|x-y|^{n+2s}} (u(x)-u(y))(\varphi(x)-\varphi(y))dydx = \int_{\Omega} f \varphi dx \quad \forall \varphi \in W^{s,2}_0(\Omega).
	$$
\end{defin}

The following comparison estimate follows from \cite[Proposition 5.1]{MeV} by taking $A=\widetilde A$ and $f=\widetilde f =0$.
\begin{prop} \label{appplxy}
	Let $x_0 \in \mathbb{R}^n$, $r>0$, $g \in W^{s,2}(\mathbb{R}^n)$ and $A \in \mathcal{L}_0(\Lambda)$. Moreover, let $u \in W^{s,2}(\mathbb{R}^n)$ be a weak solution of the equation
	\begin{equation} \label{neq5}
		L_A u = (-\Delta)^s g \text{ in } B_{2r}(x_0),
	\end{equation}
	and let $v \in W^{s,2}(\mathbb{R}^n)$ be the unique weak solution of the Dirichlet problem
	\begin{equation} \label{constcof3}
		\begin{cases} \normalfont
			L_A v = 0 & \text{ in } B_{2r}(x_0) \\
			v = u & \text{ a.e. in } \mathbb{R}^n \setminus B_{2r}(x_0).
		\end{cases}
	\end{equation}
	Then the function $w:=u-v \in W^{s,2}_0(B_{2r}(x_0))$
	satisfies 
	\begin{align*}
		& \int_{\mathbb{R}^n} \int_{\mathbb{R}^n} \frac{(w(x)-w(y))^2}{|x-y|^{n+2s}}dydx \\
		\leq & C \mu(\mathcal{B}_r(x_0)) \left (\sum_{k=1}^\infty 2^{-k(s-\theta)} \left ( \dashint_{\mathcal{B}_{2^kr}(x_0)} G^2 d\mu \right )^\frac{1}{2}\right )^2 ,
	\end{align*}
	where $C=C(n,s,\theta,\Lambda)>0$.
\end{prop}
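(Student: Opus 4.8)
The plan is to derive the claimed inequality directly from the cited comparison estimate \cite[Proposition 5.1]{MeV} by verifying that the hypotheses there specialize correctly when the ``second'' coefficient and both source functions are switched off. First I would recall the precise statement of \cite[Proposition 5.1]{MeV}: it compares a weak solution $u$ of $L_A u = (-\Delta)^s g + f$ in $B_{2r}(x_0)$ with the weak solution $v$ of the Dirichlet problem $L_{\widetilde A} v = \widetilde f$ in $B_{2r}(x_0)$, $v=u$ outside, and produces a bound on $[w]_{W^{s,2}(\mathbb R^n)}$, with $w=u-v$, in terms of the tail-type series in $G$ plus additional terms measuring the discrepancy $A-\widetilde A$ and the sizes of $f$ and $\widetilde f$. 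Setting $\widetilde A = A$, $f = \widetilde f = 0$ kills all those extra terms, so that the equation for $v$ in \eqref{constcof3} becomes precisely the homogeneous problem $L_A v = 0$ in $B_{2r}(x_0)$ with boundary datum $u$, and the remaining right-hand side is exactly $C\,\mu(\mathcal{B}_r(x_0))\bigl(\sum_{k=1}^\infty 2^{-k(s-\theta)}(\dashint_{\mathcal{B}_{2^kr}(x_0)} G^2\,d\mu)^{1/2}\bigr)^2$.

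The two points that genuinely need to be checked are: (i) that the Dirichlet problem \eqref{constcof3} is uniquely solvable in $W^{s,2}(\mathbb R^n)$, so that $v$ — and hence $w$ — is well defined, and (ii) that $w = u-v$ indeed lies in $W^{s,2}_0(B_{2r}(x_0))$. For (i) one uses that $A \in \mathcal{L}_0(\Lambda)$ gives a bounded, coercive, symmetric bilinear form on $W^{s,2}_0(B_{2r}(x_0))$ — coercivity coming from the fractional Poincaré inequality (Lemma \ref{Poincare}) applied on a ball containing $B_{2r}(x_0)$ together with the ellipticity lower bound in \eqref{eq1} — so Lax–Milgram produces a unique $w \in W^{s,2}_0(B_{2r}(x_0))$ solving $L_A w = L_A u$ (as a functional) with $v := u - w$; this is standard and is exactly the setup under which \cite[Proposition 5.1]{MeV} is stated. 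Point (ii) is immediate from the construction, since $v = u$ a.e.\ in $\mathbb R^n \setminus B_{2r}(x_0)$ forces $w = 0$ there.

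With $u$, $v$, $w$ in place, the remaining step is purely a matter of reading off the specialized estimate. I would note that the measure $\mu = \mu_\theta$ appearing here is the one fixed in Section \ref{ce} with $\theta \in (0,\min\{s,1-s\})$, so the exponent $s-\theta$ in the geometric weights $2^{-k(s-\theta)}$ is positive and the series converges whenever $G \in L^2_{loc}(\mathbb R^n \times \mathbb R^n, \mu)$, i.e.\ whenever $g \in W^{s,2}_{loc}$; since $g \in W^{s,2}(\mathbb R^n)$ by hypothesis this is automatic, and moreover the factor $\mu(\mathcal{B}_r(x_0)) = c\,r^{n+2\theta}$ by Proposition \ref{doublingmeasure}(i). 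Substituting $f = \widetilde f = 0$ and $\widetilde A = A$ into \cite[Proposition 5.1]{MeV} then yields precisely the asserted inequality, with the constant $C$ depending only on $n,s,\theta,\Lambda$ — exactly the dependence recorded in that proposition once the $A - \widetilde A$ term is dropped.

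The main obstacle, such as it is, is bookkeeping rather than mathematics: one must make sure that the normalization constants (notably $C_{n,s}$ in the definition of weak solutions of $L_A u = (-\Delta)^s g$, and the constant $c(n,\theta)$ in $\mu(\mathcal{B}_r) = c\,r^{n+2\theta}$) are absorbed consistently into the final $C$, and that the notion of weak solution of the Dirichlet problem in \cite[Proposition 5.1]{MeV} matches the one defined just above the statement here (test functions in $W^{s,2}_0(\Omega)$, boundary datum attained a.e.\ outside $\Omega$). Once these identifications are made, there is nothing further to prove.
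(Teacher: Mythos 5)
Your proposal is correct and follows exactly the route the paper takes: the paper's entire proof consists of citing \cite[Proposition 5.1]{MeV} and specializing it by setting $\widetilde A = A$ and $f = \widetilde f = 0$, which is precisely what you do, with the additional (harmless) verification of well-posedness and the bookkeeping of constants. Nothing further is needed.
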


We continue by fixing some further notation and some assumptions which we will use throughout the rest of this paper. From now on, we fix some $\Lambda \geq 1$, some $\delta>0$ to be chosen small enough, some coefficient $A \in \mathcal{L}_0(\Lambda)$ that is $\delta$-vanishing in $B_{5n}$ and some $p \in (2,\infty)$. Moreover, we fix another number $q \in [2,p)$ and define
\begin{equation} \label{qstar}
q_\alpha^\star:=\begin{cases} 
\frac{nq}{n-\alpha q}, & \text{if } n>\alpha q \\
2p, & \text{if } n \leq \alpha q.
\end{cases}
\end{equation}
In addition, we fix a number $m$ in the range
\begin{equation} \label{mdef}
2 \leq m < \min \left \{\frac{2(n-s)}{n-2s},p \right \}
\end{equation}
and define 
\begin{equation} \label{q0}
q_0:=\max \{m,q\}.
\end{equation}
Furthermore, we fix some function $g \in W^{s,2}(\mathbb{R}^n)$ and a weak solution $u \in W^{s,2}(\mathbb{R}^n)$ 
of the equation 
\begin{equation} \label{helpeqx}
L_A u = (-\Delta)^s g \text { in } B_{5n}
\end{equation}
and set
\begin{equation} \label{tailcontrol}
\begin{aligned}  
\lambda_0:= M_0 & \Bigg (\sum_{k=1}^\infty 2^{-k(s-\theta)} \left (\dashint_{\mathcal{B}_{2^k 5n}} U^2 d\mu \right )^\frac{1}{2} + \left (\dashint_{\mathcal{B}_{5n}} U_\alpha^{2} d\mu_\alpha \right )^\frac{1}{2} \\ & + \delta^{-1} \sum_{k=1}^\infty 2^{-k(s-\theta)} \left (\dashint_{\mathcal{B}_{2^k 5n}} G^2 d\mu \right )^\frac{1}{2} +\left (\dashint_{\mathcal{B}_{5n}} G_\alpha^{q_0} d\mu_\alpha \right )^\frac{1}{q_0} \Bigg ),
\end{aligned}
\end{equation}
where $M_0 \geq 1 $ remains to be chosen large enough.
From now on, we also fix some number $$\alpha \in [s,\min\{2s,1\})$$ and assuming that $\theta>\alpha-s$, we define a corresponding parameter by
\begin{equation} \label{modtheta}
\theta_\alpha := s+\theta-\alpha>0
\end{equation} 
with associated gradient-type functions
$$U_\alpha(x,y):=U_{\alpha,\theta_\alpha}(x,y)=\frac{|u(x)-u(y)|}{|x-y|^{\alpha+\theta_\alpha}}, \quad G_\alpha(x,y):=G_{\alpha,\theta_\alpha}(x,y)=\frac{|g(x)-g(y)|}{|x-y|^{\alpha+\theta_\alpha}} $$ and with associated measure
$$ \mu_\alpha(E):=\mu_{\theta_\alpha}(E)= \int_{E} \frac{dxdy}{|x-y|^{n-2\theta_\alpha}}, \quad E \subset \mathbb{R}^{2n} \text{ measurable}.$$ In addition, from this point on we assume that for any $x_0 \in \mathbb{R}^n$, $r>0$ such that $B_r(x_0) \subset B_{5n}$, and any weak solution $u_0 \in W^{s,2}(\mathbb{R}^n)$ of $L_A u_0=(-\Delta)^s g$ in $B_{r}(x_0)$, we have a higher differentiability estimate of the form
\begin{equation} \label{hdest}
\begin{aligned}
& \left (\frac{1}{\mu_\alpha(\mathcal{B}_r(x_0))}\int_{B_{r/2}(x_0)} \int_{B_{r/2}(x_0)} \frac{(u_0(x)-u_0(y))^2}{|x-y|^{n+2\alpha}}dydx \right )^\frac{1}{2} \\ \leq & C \Bigg ( \sum_{k=1}^\infty 2^{-k(s-\theta)} \left ( \dashint_{\mathcal{B}_{2^kr}(x_0)} U_0^2 d\mu \right )^\frac{1}{2} + \left ( \dashint_{\mathcal{B}_{r}(x_0)} G_\alpha^{m} d\mu_\alpha \right )^{\frac{1}{m}} \\ & + \sum_{k=1}^\infty 2^{-k(s-\theta)} \left ( \dashint_{\mathcal{B}_{2^kr}(x_0)} G^2 d\mu \right )^\frac{1}{2} \Bigg ),
\end{aligned}
\end{equation}
where $C=C(n,s,\alpha,\theta,\Lambda,m)>0$ and 
$$ U_0(x,y):=\frac{|u_0(x)-u_0(y)|}{|x-y|^{s+\theta}}.$$

\begin{lem} \label{apppl}
	Let $M>0$, $x_0 \in B_{\frac{\sqrt{n}}{2}}$, $r \in \left (0,\frac{\sqrt{n}}{2} \right )$ and $\lambda \geq \lambda_0$.
	Then for any $\varepsilon_0 >0$, there exists some small enough $\delta = \delta (\varepsilon_0,n,s,\alpha,\theta,\Lambda,m,M) \in (0,1)$, such that
	under the assumptions that
\begin{equation} \label{condU}
 {\mathcal{M}}_{\mathcal{B}_{5n}} (U_\alpha^2)(x_0) \leq  M\lambda^2, \quad  {\mathcal{M}}_{\mathcal{B}_{5n}} (G_\alpha^{q_0})(x_0) \leq M\lambda^{q_0} \delta^{q_0} ,
\end{equation}
	for the unique weak solution $v \in W^{s,2}(\mathbb{R}^n)$ of the Dirichlet problem
\begin{equation} \label{constcof3x}
\begin{cases} \normalfont
L_{A} v = 0 & \text{ in } B_{6r}(x_0) \\
v = u & \text{ a.e. in } \mathbb{R}^n \setminus B_{6r}(x_0)
\end{cases}
\end{equation}
	and the function 
	\begin{equation} \label{W}
	W_\alpha(x,y):=\frac{|u(x)-v(x)-u(y)+v(y)|}{|x-y|^{\alpha+\theta_\alpha}}, \quad (x,y) \in \mathbb{R}^{2n},
	\end{equation}
	we have
		\begin{equation} \label{west}
		\int_{\mathcal{B}_{2r}(x_0)} W_\alpha^2 d \mu_\alpha \leq \varepsilon^2 \lambda^2 \mu_\alpha(\mathcal{B}_r(x_0)) .
		\end{equation}
		\newline Moreover, the function
		$$V_\alpha(x,y):=\frac{|v(x)-v(y)|}{|x-y|^{\alpha+\theta_\alpha}}, \quad (x,y) \in \mathbb{R}^{2n}$$
		satisfies the estimate
		\begin{equation} \label{Vest}
		||V_\alpha||_{L^\infty(\mathcal{B}_{2r}(x_0),d\mu_\alpha)} \leq N_0 \lambda
		\end{equation}
		for some constant $N_0= N_0(n,s,\alpha,\theta,\Lambda,M)>0$.
\end{lem}

\begin{rem} \normalfont
	In the above Lemma and in the rest of this paper, the Hardy-Littlewood maximal function is always considered with respect to the measure $\mu_\alpha$.
\end{rem}

\begin{proof}
	Fix $x_0 \in B_{\frac{\sqrt{n}}{2}}$ and $r \in \left (0,\frac{\sqrt{n}}{2} \right )$.
	Let $l \in \mathbb{N}$ be determined by $2^{l-1} r < \sqrt{n} \leq 2^{l} r$, note that $l \geq 2$. Then for any $k < l$, by (\ref{condU}) we have
	\begin{equation} \label{UG}
		\dashint_{\mathcal{B}_{2^k4r}(x_0)} U_\alpha^2 d\mu_\alpha \leq M\lambda^2, \quad \dashint_{\mathcal{B}_{2^k3 r}(x_0)} G_\alpha^{q_0} d\mu_\alpha \leq M\lambda^{q_0} \delta^{q_0} .
	\end{equation}
	On the other hand, in view of (\ref{tailcontrol}) and the inclusions $$\mathcal{B}_{2^k \sqrt{n}}(x_0) \subset \mathcal{B}_{2^{k+l-1}4r}(x_0) \subset \mathcal{B}_{2^k 4\sqrt{n}}(x_0) \subset \mathcal{B}_{2^k 5n},$$ we have
	\begin{equation} \label{tailst}
	\begin{aligned}
		\sum_{k=l}^\infty 2^{-k(s-\theta)} \left ( \dashint_{\mathcal{B}_{2^k 4r}(x_0)} U^2 d\mu \right )^\frac{1}{2} = & 2^{-(l-1)(s-\theta)} \sum_{k=1}^\infty 2^{-k(s-\theta)} \left ( \dashint_{\mathcal{B}_{2^{k+l-1}4r}(x_0)} U^2 d\mu \right )^\frac{1}{2} \\
		\leq & \sum_{k=1}^\infty 2^{-k(s-\theta)} \left (\frac{\mu(\mathcal{B}_{2^k 5n})}{\mu \left (\mathcal{B}_{2^k \sqrt{n}} \right)} \dashint_{\mathcal{B}_{2^k 5n}} U^2 d\mu \right )^\frac{1}{2} \\
		= & C_1 \sum_{k=1}^\infty 2^{-k(s-\theta)} \left ( \dashint_{\mathcal{B}_{2^k 5n}} U^2 d\mu \right )^\frac{1}{2} \leq C_1 \lambda_0,
	\end{aligned}
	\end{equation}
	where $C_1=C_1(n,\theta)>0$.
	Moreover, by Lemma \ref{Urel} for any $k \leq l-1$ we have
	\begin{align*}
	\dashint_{\mathcal{B}_{2^k4r}(x_0)} U^2 d\mu
	\leq C_2 \dashint_{\mathcal{B}_{2^k4r}(x_0)} U_\alpha^2 d\mu_\alpha,
	\end{align*}
	where $C_2=C_2(n,s,\alpha,\theta)$. 
	Now combining the previous display with (\ref{tailst}), (\ref{UG}) and the facts that $\theta < s$ and $\lambda \geq \lambda_0$, we arrive at
	\begin{equation} \label{Ulambda}
		\begin{aligned}
			& \sum_{k=1}^\infty 2^{-k(s-\theta)} \left ( \dashint_{\mathcal{B}_{2^k4r}(x_0)} U^2 d\mu \right )^\frac{1}{2} \\ \leq & \sum_{k=1}^{l-1} 2^{-k(s-\theta)} \left ( \dashint_{\mathcal{B}_{2^k4r}(x_0)} U^2 d\mu \right )^\frac{1}{2} + \sum_{k=l}^\infty 2^{-k(s-\theta)} \left ( \dashint_{\mathcal{B}_{2^k4r}(x_0)} U^2 d\mu \right )^\frac{1}{2} \\
			\leq & C_2^\frac{1}{2} \sum_{k=1}^{l-1} 2^{-k(s-\theta)} \left ( \dashint_{\mathcal{B}_{2^k4r}(x_0)} U_\alpha^2 d\mu_\alpha \right )^\frac{1}{2} + \sum_{k=l}^\infty 2^{-k(s-\theta)} \left ( \dashint_{\mathcal{B}_{2^k4r}(x_0)} U^2 d\mu \right )^\frac{1}{2} \\
			\leq & C_2^\frac{1}{2} M^\frac{1}{2} \lambda \sum_{k=1}^\infty 2^{-k(s-\theta)}+ C_1\lambda_0 \leq C_3 \lambda, 
		\end{aligned}
	\end{equation}
	where $C_3=C_3(n,s,\alpha,\theta,M)>0$.
	In a similar way as in (\ref{tailst}), we have
	\begin{align*}
		\sum_{k=l}^\infty 2^{-k(s-\theta)} \left ( \dashint_{\mathcal{B}_{2^k3r}(x_0)} G^2 d\mu \right )^\frac{1}{2} 
		\leq C_1 \sum_{k=1}^\infty 2^{-k(s-\theta)} \left ( \dashint_{\mathcal{B}_{2^k 5n}} G^2 d\mu \right )^\frac{1}{2} \leq C_1 \lambda_0 \delta.
	\end{align*}
	Therefore, using Lemma \ref{Urel} along with H\"older's inequality, we obtain
	\begin{equation} \label{Glambda}
		\begin{aligned}
			& \sum_{k=1}^\infty 2^{-k(s-\theta)} \left ( \dashint_{\mathcal{B}_{2^k3r}(x_0)} G^2 d\mu \right )^\frac{1}{2} \\ \leq & C_2^\frac{1}{2} \sum_{k=1}^{l-1} 2^{-k(s-\theta)} \left ( \dashint_{\mathcal{B}_{2^k3r}(x_0)} G_\alpha^2 d\mu_\alpha \right )^\frac{1}{2} + \sum_{k=l}^\infty 2^{-k(s-\theta)} \left ( \dashint_{\mathcal{B}_{2^k3r}(x_0)} G^2 d\mu \right )^\frac{1}{2} \\
			\leq & C_2^\frac{1}{2} \sum_{k=1}^{l-1} 2^{-k(s-\theta)} \left ( \dashint_{\mathcal{B}_{2^k3r}(x_0)} G_\alpha^{q_0} d\mu_\alpha \right )^\frac{1}{q_0} + \sum_{k=l}^\infty 2^{-k(s-\theta)} \left ( \dashint_{\mathcal{B}_{2^k3r}(x_0)} G^2 d\mu \right )^\frac{1}{2} \\
			\leq & C_2^\frac{1}{2} M^\frac{1}{q_0} \lambda \delta \sum_{k=1}^\infty 2^{-k(s-\theta)}+ C_1\lambda_0\delta \leq C_3 \lambda\delta.
		\end{aligned}
	\end{equation}
Since $w:=u-v$ is a weak solution of $L_A w = (-\Delta)^s g$ in $B_{6r}(x_0)$, $w$ satisfies the estimate (\ref{hdest}), which combined with Proposition \ref{appplxy}, H\"older's inequality, (\ref{UG}) and (\ref{Glambda}) yields
		\begin{align*}
			\int_{\mathcal{B}_{2r}(x_0)} W_\alpha^2 d \mu_\alpha \leq & \int_{B_{3r}} \int_{B_{3r}} \frac{(w(x)-w(y))^2}{|x-y|^{n+2\alpha}}dydx
			\\ \leq & C_4 \frac{\mu_\alpha(\mathcal{B}_r(x_0))}{\mu(\mathcal{B}_r(x_0))}\int_{\mathbb{R}^n} \int_{\mathbb{R}^n} \frac{(w(x)-w(y))^2}{|x-y|^{n+2s}}dydx
			\\	& + C_4 \mu_\alpha(\mathcal{B}_r(x_0)) \left ( \dashint_{\mathcal{B}_{6r}(x_0)} G_\alpha^{m} d\mu_\alpha \right )^{\frac{2}{m}} \\ & + C_4 \mu_\alpha(\mathcal{B}_r(x_0)) \left (\sum_{k=1}^\infty 2^{-k(s-\theta)} \left ( \dashint_{\mathcal{B}_{2^k3r}(x_0)} G^2 d\mu \right )^\frac{1}{2}\right )^2 \\
			\leq & C_5 \mu_\alpha(\mathcal{B}_r(x_0)) \left ( \dashint_{\mathcal{B}_{6r}(x_0)} G_\alpha^{q_0} d\mu_\alpha \right )^{\frac{2}{q_0}} \\ & + C_5 \mu_\alpha(\mathcal{B}_r(x_0)) \left (\sum_{k=1}^\infty 2^{-k(s-\theta)} \left ( \dashint_{\mathcal{B}_{2^k3r}(x_0)} G^2 d\mu \right )^\frac{1}{2}\right )^2 \\
			\leq & C_6 \mu_\alpha(\mathcal{B}_r(x_0)) \lambda^2 \delta^2 < \varepsilon^2 \lambda^2 \mu_\alpha(\mathcal{B}_r(x_0)) ,
		\end{align*}
	where all constants depend only on $n,s,\alpha,\theta,\Lambda,m,M$ and the last inequality was obtained by choosing $\delta$ sufficiently small. This proves (\ref{west}). \par
	Let us now proof the estimate (\ref{Vest}). Define
	$$ \theta_0:=\frac{\min \{s,1-s\} + \theta}{2} \in (\theta,\min\{s,1-s\}), \quad p_0:= \frac{n+2\theta_0}{\theta_0-\theta} \in (2,\infty).$$
	Since $A$ is $\delta$-vanishing in $B_{5n}$ and therefore $(\delta,5n)$-BMO in $B_{5n}$, by \cite[Theorem 9.1]{MeV}, after choosing $\delta$ smaller if necessary, we have $v \in W^{s+\theta_0(1-2/p_0),p_0}(B_{4r}(x_0))$ and thus $V_{s,\theta_0} \in L^p(B_{4r}(x_0),\mu_{\theta_0})$. Therefore, \cite[Corollary 8.6]{MeV} yields the estimate
	\begin{align*}
		\left ( \dashint_{\mathcal{B}_{2r}(x_0)} V_{s,\theta_0}^{p_0} d\mu_{\theta_0} \right )^\frac{1}{p_0} \leq C_7 \sum_{k=1}^\infty 2^{-k(s-\theta_0)} \left ( \dashint_{\mathcal{B}_{2^k 4r}(x_0)} V_{s,\theta_0}^2 d\mu_{\theta_0} \right )^\frac{1}{2},
	\end{align*}
where $C_7=C_7(n,s,p_0,\theta_0,\Lambda)>0$,
and therefore
	\begin{align*}
			[v]_{W^{s+\theta_0(1-2/p_0),p_0}(B_{2r}(x_0))} \leq & C_7 \mu_{\theta_0}(\mathcal{B}_r(x_0))^\frac{1}{p_0} \sum_{k=1}^\infty 2^{-k(s-\theta_0)} \left ( \dashint_{\mathcal{B}_{2^k 4r}(x_0)} V_{s,\theta_0}^2 d\mu_{\theta_0} \right )^\frac{1}{2} \\
			\leq & C_{8} r^{\frac{n+2\theta_0}{p_0}-\theta_0+\theta} \sum_{k=1}^\infty 2^{-k(s-\theta)} \left ( \dashint_{\mathcal{B}_{2^k 4r}(x_0)} V^2 d\mu \right )^\frac{1}{2} \\
			= & C_8 \sum_{k=1}^\infty 2^{-k(s-\theta)} \left ( \dashint_{\mathcal{B}_{2^k 4r}(x_0)} V^2 d\mu \right )^\frac{1}{2},
	\end{align*}
where $C_8=C_8(n,s,p_0,\theta_0,\Lambda)$ and we used that $$ \frac{n+2\theta_0}{p_0}-\theta_0+\theta= 0.$$ 
Since $s+\theta_0(1-2/p_0)-\frac{n}{p_0}=s+\theta =\alpha+\theta_\alpha$, combining the previous display with the fractional Sobolev embedding given by (\ref{hemb}) yields
\begin{equation} \label{veq}
\begin{aligned}
[v]_{C^{\alpha+\theta_\alpha}(B_{2r}(x_0))}
\leq & C_{9} [v]_{W^{s+\theta_0(1-2/p_0),p_0}(B_{2r}(x_0))} \\
\leq & C_{10} \sum_{k=1}^\infty 2^{-k(s-\theta)} \left ( \dashint_{\mathcal{B}_{2^k 4r}(x_0)} V^2 d\mu \right )^\frac{1}{2},
\end{aligned}
\end{equation}
where $C_{9}$ and $C_{10}$ depend only on $n,s,p_0,\theta_0,\Lambda$.
Now in view of Proposition \ref{appplxy} along with (\ref{UG}), (\ref{Glambda}) and (\ref{Ulambda}), we have
\begin{align*}
	& \sum_{k=1}^\infty 2^{-k(s-\theta)} \left ( \dashint_{\mathcal{B}_{2^k 4r}(x_0)} V^2 d\mu \right )^\frac{1}{2} \\ \leq & \sum_{k=1}^\infty 2^{-k(s-\theta)} \left ( \dashint_{\mathcal{B}_{2^k 4r}(x_0)} U^2 d\mu \right )^\frac{1}{2} + C_{11} \left (\frac{1}{\mu(\mathcal{B}_r(x_0)} \int_{\mathbb{R}^n} \int_{\mathbb{R}^n} \frac{(w(x)-w(y))^2}{|x-y|^{n+2s}}dydx \right )^\frac{1}{2} \\
	\leq & \sum_{k=1}^\infty 2^{-k(s-\theta)} \left ( \dashint_{\mathcal{B}_{2^k 4r}(x_0)} U^2 d\mu \right )^\frac{1}{2} 
	+ C_{12} \sum_{k=1}^\infty 2^{-k(s-\theta)} \left ( \dashint_{\mathcal{B}_{2^k3r}(x_0)} G^2 d\mu \right )^\frac{1}{2}
	\leq C_{13} \lambda,
\end{align*}
where all constants depend only on $n,s,\alpha,\theta,\Lambda,M$.
Therefore, combining the last display with (\ref{veq}) yields
$$||V_\alpha||_{L^\infty(\mathcal{B}_{2r}(x_0),d\mu_\alpha)} \leq [v]_{C^{\alpha+\theta_\alpha}(B_{2r}(x_0))} \leq N_0 \lambda,$$
for some $N_0=N_0(n,s,\alpha,\theta,\Lambda,M)>0$, which proves the estimate (\ref{Vest}). This finishes the proof.
\end{proof}

\section{Good-$\lambda$ inequalities} \label{gl}
In this section, we prove some good-$\lambda$ inequalities which serve as key ingredients in the covering arguments from \cite[Section 7]{MeV}. Although the proofs of the results in this section are similar to the ones of the corresponding good-$\lambda$ inequalities in \cite[Section 6]{MeV}, since the presence of higher-order fractional gradients requires quite a few adaptations, for the sake of coherence we nevertheless provide most of the details.
\subsection{Diagonal good-$\lambda$ inequalities}
We start by proving good-$\lambda$ inequalities at the diagonal, which are somewhat akin to corresponding ones in the local setting, see e.g.\ \cite{CaffarelliPeral,ByunR}.
\begin{lem} \label{mfuse}
	There is a constant $N_d=N_d(n,s,\alpha,\theta,\Lambda) \geq 1$, such that the following holds. For any $\varepsilon > 0$ and any $\kappa>0$ there exists some small enough $\delta = \delta(\varepsilon,\kappa,n,s,\alpha,\theta,\Lambda,m) \in (0,1)$, 
	such that for any $\lambda \geq \lambda_0$, any $r \in \left (0,\frac{\sqrt{n}}{2} \right )$ and any point $x_0 \in Q_1$
	with
	\begin{equation} \label{ccll}
	\mu_\alpha \left ( \left \{(x,y) \in \mathcal{B}_{r}(x_0) \mid  {\mathcal{M}}_{\mathcal{B}_{5n}} (U_\alpha^2)(x,y) > N_d^2 \lambda^2 \right \} \right ) \geq \kappa \varepsilon \mu_\alpha(\mathcal{B}_r(x_0)),
	\end{equation}
	we have
	\begin{equation} \label{incly}
	\begin{aligned}
	\mathcal{B}_r(x_0) \subset & \left \{ (x,y) \in \mathcal{B}_r(x_0) \mid  {\mathcal{M}}_{\mathcal{B}_{5n}} (U_\alpha^2)(x,y) > \lambda^2 \right \} \\ 
	& \cap \left \{ (x,y) \in \mathcal{B}_r(x_0) \mid  {\mathcal{M}}_{\mathcal{B}_{5n}} \left (G_\alpha^{q_0} \right )(x,y) > \lambda^{q_0} \delta^{q_0} \right \},
	\end{aligned}
	\end{equation}
\end{lem}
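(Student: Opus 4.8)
The plan is to argue by contraposition on the inclusion \eqref{incly}. Suppose that \eqref{incly} fails, i.e.\ there is a point $(x_1,y_1) \in \mathcal{B}_r(x_0)$ that does not lie in the right-hand side of \eqref{incly}. Then either ${\mathcal{M}}_{\mathcal{B}_{5n}} (U_\alpha^2)(x_1,y_1) \leq \lambda^2$ or ${\mathcal{M}}_{\mathcal{B}_{5n}} (G_\alpha^{q_0})(x_1,y_1) \leq \lambda^{q_0}\delta^{q_0}$. I want to show that in this situation the measure estimate \eqref{ccll} must fail, provided $N_d$ is chosen large enough and $\delta$ small enough. The key mechanism is that the hypothesis of Lemma~\ref{apppl} is a \emph{pointwise} maximal-function bound at $x_0$ (condition \eqref{condU}), and a bound of that type at the nearby point $(x_1,y_1) \in \mathcal{B}_r(x_0)$ transfers — up to a dimensional constant $M$ coming from the volume doubling property of $\mu_\alpha$ and the fact that every ball $\mathcal{B}_\rho(x_1,y_1)$ with $(x_1,y_1)\in\mathcal{B}_r(x_0)$ is comparable to a ball centered at $x_0$ — to a bound of the same shape centered at $x_0$. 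So first I would observe that from the failure of \eqref{incly} we get the Lemma~\ref{apppl} hypotheses \eqref{condU} with a fixed constant $M=M(n,\alpha,\theta)$ (this is the step where one has to be slightly careful: if it is the $U_\alpha^2$ alternative that fails we get the first bound in \eqref{condU}, but we still need the second; here one uses that the $G_\alpha$ bound we want is not assumed but can be gotten for free when $\delta$ is small relative to $\lambda_0$ — actually, re-reading the setup, the cleanest route is to note that we are free to assume the $G_\alpha$ maximal bound always holds at scale $\lesssim r < \sqrt n/2$, since if it failed the target measure set would already be covered by the $G_\alpha$ level set and \eqref{incly} holds trivially; this lets us reduce to the genuine case where only the $U_\alpha$ alternative can fail).

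Once the hypotheses of Lemma~\ref{apppl} are in force at $x_0$ (with the parameter $M$ fixed and $\varepsilon_0$ to be chosen as a small multiple of $\kappa\varepsilon$), I would invoke it to produce the comparison function $v$ solving \eqref{constcof3x} on $B_{6r}(x_0)$, together with the two conclusions: $\int_{\mathcal{B}_{2r}(x_0)} W_\alpha^2\,d\mu_\alpha \leq \varepsilon_0^2 \lambda^2 \mu_\alpha(\mathcal{B}_r(x_0))$ and $\|V_\alpha\|_{L^\infty(\mathcal{B}_{2r}(x_0),d\mu_\alpha)} \leq N_0\lambda$. Now I would define $N_d := \max\{2N_0, C_{\mathrm{wk}}\} + 1$ or similar, where $N_0$ is the constant from \eqref{Vest} and $C_{\mathrm{wk}}$ is the constant in the weak $(2,2)$ estimate for the maximal function from Proposition~\ref{Maxfun}(i) applied to $W_\alpha$ over $\mathcal{B}_{2r}(x_0)$. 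The point is the splitting $U_\alpha \leq W_\alpha + V_\alpha$ pointwise, hence $U_\alpha^2 \leq 2W_\alpha^2 + 2V_\alpha^2$, and then for $(x,y)\in\mathcal{B}_r(x_0)$,
\begin{equation*}
{\mathcal{M}}_{\mathcal{B}_{5n}}(U_\alpha^2)(x,y) \;\leq\; 2\,{\mathcal{M}}_{\mathcal{B}_{2r}(x_0)}(W_\alpha^2)(x,y) \;+\; 2\,{\mathcal{M}}_{\mathcal{B}_{5n}\setminus\mathcal{B}_{2r}(x_0)}(U_\alpha^2)(x,y) \;+\; 2N_0^2\lambda^2,
\end{equation*}
where the middle term is controlled because any ball $\mathcal{B}_\rho(x,y)$ meeting the complement of $\mathcal{B}_{2r}(x_0)$ from a point of $\mathcal{B}_r(x_0)$ has $\rho \gtrsim r$, so it is dominated by ${\mathcal{M}}_{\geq cr,\mathcal{B}_{5n}}(U_\alpha^2)(x_0)$ which by the argument already run inside the proof of Lemma~\ref{apppl} (estimates \eqref{UG}, \eqref{Ulambda}) is $\lesssim M\lambda^2$. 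So on the set in \eqref{ccll} — where ${\mathcal{M}}_{\mathcal{B}_{5n}}(U_\alpha^2) > N_d^2\lambda^2$ with $N_d$ chosen bigger than the sum of the last two constants on the right — we must have $2\,{\mathcal{M}}_{\mathcal{B}_{2r}(x_0)}(W_\alpha^2)(x,y) > \tfrac12 N_d^2\lambda^2$, i.e.\ the set in \eqref{ccll} is contained in $\{{\mathcal{M}}_{\mathcal{B}_{2r}(x_0)}(W_\alpha^2) > c N_d^2\lambda^2\}$.

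Finally I would apply the weak $(1,1)$/$(2,2)$ bound (Proposition~\ref{Maxfun}(i) with $p=1$ applied to $W_\alpha^2$, or $p=2$ to $W_\alpha$) to get
\begin{equation*}
\mu_\alpha\!\left(\{{\mathcal{M}}_{\mathcal{B}_{2r}(x_0)}(W_\alpha^2) > cN_d^2\lambda^2\}\right) \;\leq\; \frac{C}{cN_d^2\lambda^2}\int_{\mathcal{B}_{2r}(x_0)} W_\alpha^2\,d\mu_\alpha \;\leq\; \frac{C}{cN_d^2\lambda^2}\,\varepsilon_0^2\lambda^2\,\mu_\alpha(\mathcal{B}_r(x_0)),
\end{equation*}
and now choosing $\varepsilon_0 = \varepsilon_0(\kappa,\varepsilon,N_d,\ldots)$ small enough that the constant $\tfrac{C\varepsilon_0^2}{cN_d^2} < \kappa\varepsilon$ contradicts \eqref{ccll}. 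Since $\varepsilon_0$ depends only on $\kappa,\varepsilon$ and the fixed constants, and $\delta$ in Lemma~\ref{apppl} depends on $\varepsilon_0$ and $M$, the resulting $\delta$ depends only on $\varepsilon,\kappa,n,s,\alpha,\theta,\Lambda,m$ as claimed. The main obstacle I anticipate is the bookkeeping in the middle step — cleanly passing from the pointwise maximal bound at the arbitrary point $(x_1,y_1)$ witnessing the failure of \eqref{incly} to the centered hypothesis \eqref{condU} at $x_0$, and separately controlling the "tail" contribution ${\mathcal{M}}_{\geq cr}(U_\alpha^2)$ by $\lambda_0 \lesssim \lambda$ so that it does not interfere with the absorption into $N_d^2\lambda^2$; both are essentially repetitions of the tail computations already carried out in the proof of Lemma~\ref{apppl}, but they must be redone with the maximal function rather than a single average.
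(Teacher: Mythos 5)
Your argument from the midpoint onward --- invoking Lemma \ref{apppl} to get the comparison function $v$, splitting $U_\alpha \le W_\alpha + V_\alpha$, absorbing $V_\alpha$ via the $L^\infty$ bound \eqref{Vest}, controlling the large-radius part of the maximal function by the pointwise hypothesis at the witnessing point, and finishing with the weak-type estimate on $\mathcal{M}_{\mathcal{B}_{2r}(x_0)}(W_\alpha^2)$ together with a choice of $\varepsilon_0$ small against $\kappa\varepsilon$ --- is exactly the paper's proof, including the choice $N_d = \bigl(\max\{4N_0^2, 5^{n+2\theta_\alpha}\}\bigr)^{1/2}$ in spirit. The genuine problem is at the very first step, and it stems from the statement itself: the ``$\cap$'' in \eqref{incly} is a typo for ``$\cup$''. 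The intersection version is false (take $g\equiv 0$: the $G_\alpha$ level set is empty, so the right-hand side of \eqref{incly} is empty, yet \eqref{ccll} certainly holds for suitable small $r$ by lower semicontinuity of the maximal function), and both the off-diagonal analogue Lemma \ref{mfusenondiag} and the covering argument behind Proposition \ref{levelsetest1} use the union form. With ``$\cup$'', the negation of \eqref{incly} yields a single point $(x',y')\in\mathcal{B}_r(x_0)$ at which \emph{both} $\mathcal{M}_{\mathcal{B}_{5n}}(U_\alpha^2)(x',y')\le\lambda^2$ and $\mathcal{M}_{\mathcal{B}_{5n}}(G_\alpha^{q_0})(x',y')\le\lambda^{q_0}\delta^{q_0}$; the inclusions $\mathcal{B}_\rho(x_0)\subset\mathcal{B}_{2\rho}(x',y')\subset\mathcal{B}_{3\rho}(x_0)$ then give \eqref{condU} with $M=3^{n+2\theta_\alpha}$, and no case distinction ever arises.

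By contrast, your proposed workaround for the ``either/or'' dichotomy does not close and should be discarded: if the witnessing point satisfies only $\mathcal{M}(U_\alpha^2)\le\lambda^2$ while $\mathcal{M}(G_\alpha^{q_0})>\lambda^{q_0}\delta^{q_0}$ there, nothing places the remaining points of $\mathcal{B}_r(x_0)$ into the $G_\alpha$ level set (membership of one point in a level set says nothing about the others), and the smallness of $\mathcal{M}(G_\alpha^{q_0})$ is genuinely needed in Lemma \ref{apppl} --- it is what makes $w=u-v$ small in \eqref{west} --- so the comparison step cannot be run without it. Two smaller points: your displayed decomposition of $\mathcal{M}_{\mathcal{B}_{5n}}(U_\alpha^2)$ with the restricted maximal function $\mathcal{M}_{\mathcal{B}_{5n}\setminus\mathcal{B}_{2r}(x_0)}$ is not literally the right object; the clean version (as in the paper) splits the supremum over radii into $\rho<r$, where $\mathcal{B}_\rho(x_1,y_1)\subset\mathcal{B}_{2r}(x_0)$ and the $W_\alpha/V_\alpha$ splitting applies, and $\rho\ge r$, where $\mathcal{B}_\rho(x_1,y_1)\subset\mathcal{B}_{3\rho}(x',y')\subset\mathcal{B}_{5\rho}(x_1,y_1)$ bounds the average by $5^{n+2\theta_\alpha}\lambda^2$ directly from the hypothesis at $(x',y')$, with no appeal to $\lambda_0$ needed at this stage. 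And $\varepsilon_0$ should be taken of order $\sqrt{\kappa\varepsilon}$ rather than $\kappa\varepsilon$, since it enters the final bound squared.
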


\begin{proof}
	Let $\varepsilon_0 >0$ and $M>0$ to be chosen and consider the corresponding $\delta = \delta(\varepsilon_0,n,s,\theta,\Lambda,m,M) \in (0,1)$ given by Lemma $\ref{apppl}$.
	Fix $\varepsilon, \kappa > 0$, $r \in \left (0,\frac{\sqrt{n}}{2} \right )$, $x_0 \in Q_1$ and assume that (\ref{ccll}) holds, but that (\ref{incly}) is false, so that there exists
	a point $(x^\prime,y^\prime) \in \mathcal{B}_r(x_0)$ such that 
	$$ {\mathcal{M}}_{\mathcal{B}_{5n}} (U_\alpha^2)(x^\prime,y^\prime) \leq \lambda^2, \quad  {\mathcal{M}}_{\mathcal{B}_{5n}} \left (G_\alpha^{q_0} \right )(x^\prime,y^\prime) \leq \lambda^{q_0} \delta^{q_0}.$$
	Thus, for any $\rho >0$ we have 
	\begin{equation} \label{lumh}
	\dashint_{\mathcal{B}_\rho(x^\prime,y^\prime)}\chi_{\mathcal{B}_{5n}} U_\alpha^2 d\mu_\alpha \leq \lambda^2, \quad \dashint_{\mathcal{B}_\rho(x^\prime,y^\prime)} \chi_{\mathcal{B}_{5n}} G_\alpha^{q_0} d\mu_\alpha \leq \lambda^{q_0} \delta^{q_0}.
	\end{equation}
	Observe that for any $\rho \geq r$, we have $\mathcal{B}_\rho(x_0) \subset \mathcal{B}_{2\rho}(x^\prime,y^\prime) \subset \mathcal{B}_{3\rho}(x_0)$.
	Together with (\ref{lumh}), we obtain
	\begin{align*}
	\dashint_{\mathcal{B}_\rho(x_0)} \chi_{\mathcal{B}_{5n}} U_\alpha^2 d\mu_\alpha \leq \frac{\mu_\alpha(\mathcal{B}_{2\rho}(x^\prime,y^\prime))}{\mu_\alpha(\mathcal{B}_{\rho}(x_0))} \text{ } \dashint_{\mathcal{B}_{2\rho}(x^\prime,y^\prime)} \chi_{\mathcal{B}_{5n}} U_\alpha^2 d\mu_\alpha \leq & \frac{\mu_\alpha(\mathcal{B}_{3\rho}(x_0))}{\mu_\alpha(\mathcal{B}_\rho(x_0))} \text{ } \dashint_{\mathcal{B}_{2\rho}(x^\prime,y^\prime)} \chi_{\mathcal{B}_{5n}} U_\alpha^2 d\mu_\alpha \\ \leq & 3^{n+2\theta_\alpha} \lambda^2
	\end{align*}
	and similarly
	\begin{align*}
	\dashint_{\mathcal{B}_\rho(x_0)} \chi_{\mathcal{B}_{5n}} G_\alpha^{q_0} d\mu_\alpha \leq \frac{\mu_\alpha(\mathcal{B}_{2\rho}(x^\prime,y^\prime))}{\mu_\alpha(\mathcal{B}_{\rho}(x_0))} \text{ } \dashint_{\mathcal{B}_{2\rho}(x^\prime,y^\prime)} \chi_{\mathcal{B}_{5n}} G_\alpha^{q_0} d\mu_\alpha  \leq 3^{n+2\theta_\alpha} \lambda^{q_0} \delta^{q_0},
	\end{align*}
	so that $U_\alpha$ and $G_\alpha$ satisfy 
	the condition $(\ref{condU})$ with $M=3^{n+2\theta_\alpha}$.
	Therefore, by Lemma \ref{apppl} the weak solution $v \in W^{s,2}(\mathbb{R}^n)$ of the Dirichlet problem
	$$ \begin{cases} \normalfont
	L_{A} v = 0 & \text{ in } B_{6r}(x_0) \\
	v = u & \text{ a.e. in } \mathbb{R}^n \setminus B_{6r}(x_0)
	\end{cases} $$
	satisfies
	\begin{equation} \label{aprox5}
	\int_{\mathcal{B}_{2r}(x_0)} W_\alpha^2 d\mu_\alpha \leq \varepsilon_0^2 \lambda^2 \mu_\alpha(\mathcal{B}_{r}(x_0)) ,
	\end{equation}
	where $W_\alpha$ is given as in (\ref{W}). In addition, also by Lemma $\ref{apppl}$ there exists a constant $N_0=N_0(n,s,\alpha,\theta, \Lambda) >0$ such that 
	\begin{equation} \label{loclinf}
	||V_\alpha||_{L^\infty(\mathcal{B}_{2r}(x_0))}^2 \leq N_0^2 \lambda^2.
	\end{equation}
	Next, we set $N_d := (\max \{ 4 N_0^2, 5^{n+2\theta_\alpha} \})^{1/2} > 1$ and claim that 
	\begin{equation} \label{inclusion}
	\begin{aligned}
	& \left \{ (x,y) \in \mathcal{B}_r(x_0) \mid  {\mathcal{M}}_{\mathcal{B}_{5n}} ( U_\alpha^2 )(x,y) > N_d^2\lambda^2 \right \} \\ \subset & \left \{ (x,y) \in \mathcal{B}_r(x_0) \mid  {\mathcal{M}}_{\mathcal{B}_{2r}(x_0)} ( W_\alpha^2 )(x,y) > N_0^2\lambda^2 \right \}. 
	\end{aligned}
	\end{equation}
	To see this, assume that 
	\begin{equation} \label{menge}
	(x_1,y_1) \in \left \{ x \in \mathcal{B}_r(x_0) \mid  {\mathcal{M}}_{\mathcal{B}_{2r}(x_0)} ( W_\alpha^2 ) (x,y) \leq N_0^2\lambda^2 \right \}. 
	\end{equation}
	For $ \rho < r$, we have $\mathcal{B}_\rho (x_1,y_1) \subset \mathcal{B}_r(x_1,y_1) \subset \mathcal{B}_{2r}(x_0)$, so that together with $(\ref{menge})$ and $(\ref{loclinf})$ we deduce 
	\begin{align*}
	\dashint_{\mathcal{B}_\rho (x_1,y_1)} U_\alpha^2 d\mu_\alpha & \leq 2 \text{ } \dashint_{\mathcal{B}_\rho(x_1,y_1)} \left ( W_\alpha^2+ V_\alpha^2 \right )d\mu_\alpha \\
	& \leq 2 \text{ } \dashint_{\mathcal{B}_\rho(x_1,y_1)} W_\alpha^2 d\mu_\alpha + 2 \text{ } ||V_\alpha||_{L^\infty(\mathcal{B}_\rho(x_1,y_1))}^2 \\
	& \leq 2 \text{ }  {\mathcal{M}}_{\mathcal{B}_{2r}(x_0)} ( W_\alpha^2 )(x_1,y_1) + 2 \text{ } ||V_\alpha||_{L^\infty(\mathcal{B}_{2r}(x_0))}^2 \leq 4 N_0^2\lambda^2. 
	\end{align*}
	On the other hand, for $\rho \geq r$ we have $ \mathcal{B}_\rho (x_1,y_1) \subset \mathcal{B}_{3 \rho}(x^\prime,y^\prime) \subset \mathcal{B}_{5 \rho}(x_1,y_1)$, so that $(\ref{lumh})$ implies 
	\begin{align*}
	\dashint_{\mathcal{B}_\rho(x_1,y_1)} \chi_{\mathcal{B}_{5n}} U_\alpha^2 d\mu_\alpha \leq & \frac{\mu_\alpha(\mathcal{B}_{3 \rho} (x^\prime,y^\prime))}{\mu_\alpha(\mathcal{B}_\rho(x_1,y_1))} \dashint_{\mathcal{B}_{3 \rho} (x^\prime,y^\prime)} \chi_{\mathcal{B}_{5n}} U_\alpha^2 d\mu_\alpha \\ \leq & \frac{\mu_\alpha(\mathcal{B}_{5 \rho} (x_1,y_1))}{\mu_\alpha(\mathcal{B}_\rho(x_1,y_1))} \dashint_{\mathcal{B}_{3 \rho} (x^\prime,y^\prime)} \chi_{\mathcal{B}_{5n}} U_\alpha^2 d\mu_\alpha \leq 5^{n+2\theta_\alpha}\lambda^2.
	\end{align*}
	Thus, we have $$ (x_1,y_1) \in \left \{ (x,y) \in \mathcal{B}_r(x_0,y_0) \mid  {\mathcal{M}}_{\mathcal{B}_{5n}} ( U_\alpha^2 )(x,y) \leq N_d^2 \lambda^2 \right \} ,$$ 
	which implies $(\ref{inclusion})$. 
	Now using $(\ref{inclusion})$, the weak $1$-$1$ estimate from Proposition \ref{Maxfun} and $(\ref{aprox5})$, we conclude that there exists some constant $C=C(n,\theta_\alpha)>0$ such that 
	\begin{align*}
	& \mu_\alpha \left ( \left \{(x,y) \in \mathcal{B}_r(x_0) \mid  {\mathcal{M}}_{\mathcal{B}_{5n}} (U_\alpha^2)(x,y) > N_d^2\lambda^2 \right \} \right ) \\
	\leq & \mu_\alpha \left ( \left \{ (x,y) \in \mathcal{B}_r(x_0) \mid  {\mathcal{M}}_{\mathcal{B}_{2r}(x_0)} ( W_\alpha^2 )(x,y) > N_0^2\lambda^2 \right \} \right ) \\
	\leq & \frac{C}{N_0^2\lambda^2} \int_{\mathcal{B}_{2r}(x_0)} W_\alpha^2 d\mu_\alpha \\
	\leq & \frac{C}{N_0^2} \mu_\alpha(\mathcal{B}_{r}(x_0)) \varepsilon_0^2 < \varepsilon \kappa \mu_\alpha(\mathcal{B}_{r}(x_0)),
	\end{align*}
	where the last inequality is obtained by choosing $\varepsilon_0$ and thus also $\delta$ sufficiently small. This contradicts (\ref{ccll}) and thus finishes our proof.
\end{proof}

\subsection{Off-diagonal reverse H\"older inequalities}
While in the setting of local elliptic equations of the form (\ref{localeq}) proving analogues of the above diagonal good-$\lambda$ inequalities is sufficient in order to establish the desired Sobolev regularity, in our nonlocal setting which involves fractional gradients defined on $\mathbb{R}^{2n}$, it is also necessary to prove an analogue of Lemma \ref{mfuse} on balls that are far away from the diagonal. However, since far away from the diagonal the equation cannot be used very efficiently, in this situation no useful comparison estimates are available. \par In order to bypass this loss of information, as in \cite{MeV} we replace the comparison estimates used in the diagonal setting by certain off-diagonal reverse H\"older inequalities with diagonal correction terms, which in view of an iteration argument in the end will still be sufficiently strong tools in order to deduce the desired regularity. \par
For this reason, in addition to the assumption that $u$ satisfies the estimate (\ref{hdest}), from now on we assume that for any $r>0$, $x_0 \in \mathbb{R}^n$ with $B_{r}(x_0) \subset B_{5n}$, $U_\alpha$ satisfies an estimate of the form
\begin{equation} \label{J}
\begin{aligned}
\left (\dashint_{\mathcal{B}_{r/2}(x_0)} U_\alpha^{q} d\mu_\alpha \right )^{\frac{1}{q}} \leq C_q& \Bigg ( \sum_{k=1}^\infty 2^{-k(s-\theta)} \left ( \dashint_{\mathcal{B}_{2^kr}(x_0)} U^2 d\mu \right )^\frac{1}{2} \\ & + \left (\dashint_{\mathcal{B}_{r}(x_0)} G_\alpha^{q_0} d\mu_\alpha \right )^\frac{1}{q_0} + \sum_{k=1}^\infty 2^{-k(s-\theta)} \left ( \dashint_{\mathcal{B}_{2^kr}(x_0)} G^2 d\mu \right )^\frac{1}{2} \Bigg ),
\end{aligned}
\end{equation}
where $C_q$ depends only on $q,n,s,\alpha,\theta,m$ and $\Lambda$.
\begin{prop} \label{offdiagreverse}
	Let $r>0$, $x_0,y_0 \in \mathbb{R}^n$ and suppose that for some $\gamma \in (0,1]$ we have $\textnormal{dist}(B_r(x_0),B_r(y_0)) \geq \gamma r$.
	Then we have
	\begin{align*}
	& \left (\dashint_{\mathcal{B}_r(x_0,y_0)} U_\alpha^{q_\alpha^\star} d\mu_\alpha \right )^{\frac{1}{q_\alpha^\star}} \\ \leq & C_{nd} \left (\dashint_{\mathcal{B}_r(x_0,y_0)} U_\alpha^{2} d\mu_\alpha \right )^\frac{1}{2} \\
	& + C_{nd} \left ( \frac{r}{\textnormal{dist}(B_r(x_0),B_r(y_0))} \right )^{\alpha+\theta_\alpha} \Bigg ( \sum_{k=1}^\infty 2^{-k(s-\theta)} \left ( \dashint_{\mathcal{B}_{2^kr}(x_0)} U^2 d\mu \right )^\frac{1}{2} \\ & \text{ }+ \left (\dashint_{\mathcal{B}_{2r}(x_0)} G_\alpha^{q_0} d\mu_\alpha \right )^\frac{1}{q_0} + \sum_{k=1}^\infty 2^{-k(s-\theta)} \left ( \dashint_{\mathcal{B}_{2^kr}(x_0)} G^2 d\mu \right )^\frac{1}{2} \Bigg ) \\
	& + C_{nd} \left ( \frac{r}{\textnormal{dist}(B_r(x_0),B_r(y_0))} \right )^{\alpha+\theta_\alpha} \Bigg ( \sum_{k=1}^\infty 2^{-k(s-\theta)} \left ( \dashint_{\mathcal{B}_{2^kr}(y_0)} U^2 d\mu \right )^\frac{1}{2} \\ & \text{ }+ \left (\dashint_{\mathcal{B}_{2r}(y_0)} G_\alpha^{q_0} d\mu_\alpha \right )^\frac{1}{q_0} + \sum_{k=1}^\infty 2^{-k(s-\theta)} \left ( \dashint_{\mathcal{B}_{2^kr}(y_0)} G^2 d\mu \right )^\frac{1}{2} \Bigg ),
	\end{align*}
	where $C_{nd}=C_{nd}(n,s,\alpha,\theta,\Lambda,\gamma,m,q,p) \geq 1$ and $q_\alpha^\star$ is given by (\ref{qstar}).
\end{prop}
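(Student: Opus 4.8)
The plan is to exploit that, away from the diagonal, the weight $|x-y|^{2\theta_\alpha-n}$ defining $\mu_\alpha$ is comparable to a constant, so that $\mu_\alpha$ essentially becomes a rescaled copy of Lebesgue measure on the product ball $\mathcal{B}_r(x_0,y_0)$; the estimate then reduces to a fairly classical Sobolev--Poincar\'e argument on the two factor balls, with the nonlocal tails entering only through the standing assumption (\ref{J}). Concretely, writing $d:=\textnormal{dist}(B_r(x_0),B_r(y_0))$, one first observes that for $(x,y)\in\mathcal{B}_r(x_0,y_0)$ one has $d\le|x-y|\le d+4r\le(1+4\gamma^{-1})d$, since $d\ge\gamma r$; hence $|x-y|^{2\theta_\alpha-n}\simeq d^{2\theta_\alpha-n}$ on $\mathcal{B}_r(x_0,y_0)$ with constants depending only on $n$ and $\gamma$, $\mu_\alpha(\mathcal{B}_r(x_0,y_0))\simeq r^{2n}d^{2\theta_\alpha-n}$, and $\mu_\alpha$-averages over $\mathcal{B}_r(x_0,y_0)$ are comparable to Lebesgue averages. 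In particular $U_\alpha(x,y)\simeq d^{-(\alpha+\theta_\alpha)}|u(x)-u(y)|$ there, so that
\[
\Big(\dashint_{\mathcal{B}_r(x_0,y_0)}U_\alpha^{q_\alpha^\star}\,d\mu_\alpha\Big)^{1/q_\alpha^\star}\le C\,d^{-(\alpha+\theta_\alpha)}\Big(\dashint_{B_r(x_0)}\dashint_{B_r(y_0)}|u(x)-u(y)|^{q_\alpha^\star}\,dy\,dx\Big)^{1/q_\alpha^\star}.
\]

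Next I would insert $\pm\,\overline u_{B_r(x_0)}\pm\,\overline u_{B_r(y_0)}$ and apply Minkowski's inequality to bound the last double average by $\big(\dashint_{B_r(x_0)}|u-\overline u_{B_r(x_0)}|^{q_\alpha^\star}\big)^{1/q_\alpha^\star}$, the analogous term at $y_0$, and the cross term $|\overline u_{B_r(x_0)}-\overline u_{B_r(y_0)}|$. For the cross term one estimates $|\overline u_{B_r(x_0)}-\overline u_{B_r(y_0)}|\le\dashint_{B_r(x_0)}\dashint_{B_r(y_0)}|u(x)-u(y)|\,dy\,dx\le(C_\gamma d)^{\alpha+\theta_\alpha}\dashint_{B_r(x_0)}\dashint_{B_r(y_0)}U_\alpha\,dy\,dx$, and by the comparability from the first step together with Jensen's inequality this is $\le C_\gamma d^{\alpha+\theta_\alpha}\big(\dashint_{\mathcal{B}_r(x_0,y_0)}U_\alpha^{2}\,d\mu_\alpha\big)^{1/2}$; multiplying by $d^{-(\alpha+\theta_\alpha)}$ produces precisely the leading term of the claim.

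It remains to handle the two oscillation terms. By the fractional Sobolev--Poincar\'e inequality (Lemma \ref{SobPoincare}) with differentiability parameter $\alpha\in(0,1)$, integrability exponent $q$ and target exponent $q_\alpha^\star$ — admissible because $q_\alpha^\star=\frac{nq}{n-\alpha q}$ is exactly the critical exponent when $\alpha q<n$, and $q_\alpha^\star=2p<\infty$ is arbitrary when $\alpha q\ge n$ — one gets
\[
\Big(\dashint_{B_r(x_0)}|u-\overline u_{B_r(x_0)}|^{q_\alpha^\star}\Big)^{1/q_\alpha^\star}\le C r^{\alpha}\Big(\dashint_{B_r(x_0)}\int_{B_r(x_0)}\frac{|u(x)-u(y)|^{q}}{|x-y|^{n+\alpha q}}\,dy\,dx\Big)^{1/q}.
\]
Since $\frac{|u(x)-u(y)|^{q}}{|x-y|^{n+\alpha q}}\,dx\,dy=|x-y|^{(q-2)\theta_\alpha}\,U_\alpha^{q}\,d\mu_\alpha$ with $(q-2)\theta_\alpha\ge0$, and $|x-y|<2r$ on $B_r(x_0)\times B_r(x_0)$, using $\mu_\alpha(\mathcal{B}_r(x_0))=cr^{n+2\theta_\alpha}$ the right-hand side above is at most $Cr^{\theta_\alpha}\big(\dashint_{\mathcal{B}_r(x_0)}U_\alpha^{q}\,d\mu_\alpha\big)^{1/q}$, so altogether (recalling $\alpha+\theta_\alpha=s+\theta$)
\[
\Big(\dashint_{B_r(x_0)}|u-\overline u_{B_r(x_0)}|^{q_\alpha^\star}\Big)^{1/q_\alpha^\star}\le C r^{\alpha+\theta_\alpha}\Big(\dashint_{\mathcal{B}_r(x_0)}U_\alpha^{q}\,d\mu_\alpha\Big)^{1/q}.
\]
Finally I would invoke the standing assumption (\ref{J}) at centre $x_0$ and radius $2r$ to bound $\big(\dashint_{\mathcal{B}_r(x_0)}U_\alpha^{q}\,d\mu_\alpha\big)^{1/q}$ by a constant times the bracketed expression at $x_0$ in the statement (after a harmless reindexing of the geometric tail sums, which only costs the factor $2^{s-\theta}$ since $\theta<s$). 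Multiplying by the prefactor $d^{-(\alpha+\theta_\alpha)}$ turns $r^{\alpha+\theta_\alpha}$ into $\big(r/\textnormal{dist}(B_r(x_0),B_r(y_0))\big)^{\alpha+\theta_\alpha}$; repeating the argument at $y_0$ and summing the three contributions yields the asserted inequality.

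The part I expect to require the most care is the scaling bookkeeping in the third step — in particular the exact identity $\frac{|u(x)-u(y)|^{q}}{|x-y|^{n+\alpha q}}\,dx\,dy=|x-y|^{(q-2)\theta_\alpha}U_\alpha^{q}\,d\mu_\alpha$ and the resulting factor $r^{q\theta_\alpha}$, together with checking that $q_\alpha^\star$ is genuinely an admissible target exponent for Lemma \ref{SobPoincare} in both the subcritical and the critical/supercritical regime. The one conceptual point is recognising that the average difference $|\overline u_{B_r(x_0)}-\overline u_{B_r(y_0)}|$ should be absorbed into the $L^2$-average of $U_\alpha$ over the product ball rather than into the nonlocal tails; this is what produces the leading term $\big(\dashint_{\mathcal{B}_r(x_0,y_0)}U_\alpha^{2}\,d\mu_\alpha\big)^{1/2}$ and is the reason an $L^2$ term, rather than an $L^q$ term, suffices there.
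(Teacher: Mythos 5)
Your proposal is correct and follows essentially the same route as the paper's proof: the off-diagonal comparability of $\mu_\alpha$ to a constant-weight measure, the Minkowski splitting into two oscillation terms plus the cross term $|\overline u_{B_r(x_0)}-\overline u_{B_r(y_0)}|$, the fractional Sobolev--Poincar\'e inequality combined with the standing assumption (\ref{J}) for the oscillation terms, and Cauchy--Schwarz for the cross term yielding the leading $L^2$ average of $U_\alpha$ over the product ball. The scaling identity and the admissibility check for $q_\alpha^\star$ that you flag are exactly the points the paper verifies, and you handle them correctly.
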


\begin{proof}
	Choose points $x_1 \in \overline B_r(x_0)$ and $y_1 \in \overline B_r(y_0)$ such that $\textnormal{dist}(B_r(x_0),B_r(y_0))=|x_1-y_1|$. For any $(x,y) \in \mathcal{B}_r(x_0,y_0)$, we observe that
	\begin{align*}
	|x-y| \leq & |x_1-y_1| + |x_1-x| + |y_1-y| \\
	\leq & \textnormal{dist}(B_r(x_0),B_r(y_0)) + 2 r
	\leq 3 \textnormal{dist}(B_r(x_0),B_r(y_0))/\gamma.
	\end{align*}
	Together with the definition of $\textnormal{dist}(B_r(x_0),B_r(y_0))$, for any $(x,y) \in \mathcal{B}_r(x_0,y_0)$ we obtain
	\begin{equation} \label{ineq99}
	1 \leq \frac{|x-y|}{\textnormal{dist}(B_r(x_0),B_r(y_0))} \leq 3 /\gamma.
	\end{equation}
	Thus, by taking into account the definition of the measure $\mu_\alpha$, we conclude that
	\begin{equation} \label{comparable}
	\frac{c_1 r^{2n}}{\textnormal{dist}(B_r(x_0),B_r(y_0))^{n-2\theta_\alpha}} \leq \mu_\alpha(\mathcal{B}_r(x_0,y_0)) \leq \frac{C_1 r^{2n}}{\textnormal{dist}(B_r(x_0),B_r(y_0))^{n-2\theta_\alpha}},
	\end{equation}
	where $c_1=c_1(n,\gamma,\theta_\alpha) \in (0,1)$ and $C_1=C_1(n,\theta_\alpha) \geq 1$.
	By (\ref{comparable}) and (\ref{ineq99}), we have
	\begin{align*}
	\left (\dashint_{\mathcal{B}_r(x_0,y_0)} U_\alpha^{q_\alpha^\star} d\mu_\alpha \right )^{\frac{1}{q_\alpha^\star}} \leq & \left (\frac{\textnormal{dist}(B_r(x_0),B_r(y_0))^{n-2\theta_\alpha}}{c_1 r^{2n}} \int_{B_r(x_0)}\int_{B_r(y_0)} \frac{|u(x)-u(y)|^{q_\alpha^\star}}{|x-y|^{n-2\theta_\alpha+{q_\alpha^\star}(\alpha+\theta_\alpha)}}dydx \right )^{\frac{1}{q_\alpha^\star}} \\
	\leq & C_2 \textnormal{dist}(B_r(x_0),B_r(y_0))^{-(\alpha+\theta_\alpha)} \left ( \dashint_{B_r(x_0)} \dashint_{B_r(y_0)} |u(x)-u(y)|^{q_\alpha^\star}dydx \right )^{\frac{1}{q_\alpha^\star}},
	\end{align*}
	where $C_2=C_2(n,\gamma,\theta_\alpha)\geq 1$. In view of Minkowski's inequality, we can further estimate the integral on the right-hand side as follows
	\begin{align*}
	\left ( \dashint_{B_r(x_0)} \dashint_{B_r(y_0)} |u(x)-u(y)|^{q_\alpha^\star}dydx \right )^{\frac{1}{q_\alpha^\star}} \leq & \underbrace{\left ( \dashint_{B_r(x_0)} |u(x)- \overline u_{B_r(x_0)}|^{q_\alpha^\star}dydx \right )^{\frac{1}{q_\alpha^\star}}}_{=:I_1} \\
	& + \underbrace{\left ( \dashint_{B_r(y_0)} |u(x)- \overline u_{B_r(y_0)}|^{q_\alpha^\star}dydx \right )^{\frac{1}{q_\alpha^\star}}}_{=:I_2} \\
	& + \underbrace{|\overline u_{B_r(x_0)}-\overline u_{B_r(y_0)}|}_{=:I_3}.
	\end{align*}
	By using the fractional Sobolev-Poincar\'e inequality (Lemma \ref{SobPoincare}) and then the estimate (\ref{J}), for $I_1$ we obtain
	\begin{align*}
	I_1 \leq & C_3 r^\alpha \left ( \frac{1}{r^n} \int_{B_r(x_0)} \int_{B_r(x_0)} \frac{|u(x)-u(y)|^{q}}{|x-y|^{n+\alpha q}}dydx \right )^{\frac{1}{q}} \\
	= & C_4 r^\alpha \left (\frac{r^{2 \theta_\alpha} }{\mu_\alpha(\mathcal{B}_r(x_0))} \int_{B_r(x_0)} \int_{B_r(x_0)} \frac{|u(x)-u(y)|^{q} |x-y|^{(q-2)\theta_\alpha}}{|x-y|^{n-2\theta_\alpha+q(\alpha+\theta_\alpha)}}dydx \right )^{\frac{1}{q}} \\
	\leq & C_5 r^\alpha \left ( r^{q\theta_\alpha} \dashint_{\mathcal{B}_r(x_0)} \frac{|u(x)-u(y)|^{q}}{|x-y|^{q(\alpha+\theta_\alpha)}}d\mu_\alpha \right )^{\frac{1}{q}} \\
	= & C_5 r^{\alpha+\theta_\alpha} \left ( \dashint_{\mathcal{B}_r(x_0)} U_\alpha^q d\mu_\alpha \right )^\frac{1}{q} \\
	\leq & C_q C_5 r^{\alpha+\theta_\alpha} \Bigg ( \sum_{k=1}^\infty 2^{-k(s-\theta)} \left ( \dashint_{\mathcal{B}_{2^kr}(x_0)} U^2 d\mu \right )^\frac{1}{2} \\ & \text{ } + \left (\dashint_{\mathcal{B}_{2r}(x_0)} G_\alpha^{q_0} d\mu_\alpha \right )^\frac{1}{q_0} + \sum_{k=1}^\infty 2^{-k(s-\theta)} \left ( \dashint_{\mathcal{B}_{2^kr}(x_0)} G^2 d\mu \right )^\frac{1}{2} \Bigg )
	\end{align*}
	where $C_3,C_4$ and $C_5$ depend only on $n,s,\alpha,\theta$ and $\theta_\alpha$. In the same way, for $I_2$ we deduce that
	\begin{align*}
	I_2 \leq & C_q C_5 r^{\alpha+\theta_\alpha} \Bigg ( \sum_{k=1}^\infty 2^{-k(s-\theta)} \left ( \dashint_{\mathcal{B}_{2^kr}(y_0)} U^2 d\mu \right )^\frac{1}{2} \\ & \text{ } + \left (\dashint_{\mathcal{B}_{2r}(y_0)} G_\alpha^{q_0} d\mu_\alpha \right )^\frac{1}{q_0} + \sum_{k=1}^\infty 2^{-k(s-\theta)} \left ( \dashint_{\mathcal{B}_{2^kr}(y_0)} G^2 d\mu \right )^\frac{1}{2} \Bigg ).
	\end{align*}
	Finally, by the Cauchy-Schwarz inequality, (\ref{comparable}) and (\ref{ineq99}), for $I_3$ we have
	\begin{align*}
	I_3 \leq & \dashint_{B_r(x_0)} \dashint_{B_r(y_0)} |u(x)-u(y)|dydx \\
	\leq & \left (\dashint_{B_r(x_0)} \dashint_{B_r(y_0)} |u(x)-u(y)|^2 dydx \right )^\frac{1}{2} \\
	\leq & \left (\frac{C_1}{\textnormal{dist}(B_r(x_0),B_r(y_0))^{n-2\theta_\alpha} \mu_\alpha(\mathcal{B}_r(x_0,y_0))} \int_{B_r(x_0)} \int_{B_r(y_0)} |u(x)-u(y)|^2 dydx \right )^\frac{1}{2} \\
	\leq & C_6 \left (\dashint_{\mathcal{B}_r(x_0,y_0)} |u(x)-u(y)|^2 d\mu_\alpha \right )^\frac{1}{2} \\
	\leq & C_7 \textnormal{dist}(B_r(x_0),B_r(y_0))^{\alpha+\theta_\alpha} \left (\dashint_{\mathcal{B}_r(x_0,y_0)} U_\alpha^2 d\mu_\alpha \right )^\frac{1}{2},
	\end{align*}
	where $C_6=C_6(n,\gamma,\theta_\alpha) \geq 1$ and $C_7=C_7(n,\gamma,\theta_\alpha) \geq 1$. The claim now follows by combining the last five displays, so that the proof is finished.
\end{proof}
\subsection{Off-diagonal good-$\lambda$ inequalities}
In what follows, we fix some $\varepsilon \in (0,1)$ to be chosen small enough and set 
\begin{equation} \label{Neps}
N_{\varepsilon,q}:=\frac{C_{nd} C_{s,\theta} C_\alpha N_d 10^{10n}}{ \varepsilon^{1/q_\alpha^\star}},
\end{equation}
where $N_d=N_d(n,s,\alpha,\theta,\Lambda) \geq 1$ is given by Lemma \ref{mfuse},  $C_{nd}=C_{nd}(n,s,\alpha,\theta,\Lambda,\gamma,m,q,p) \geq 1$ is given by Proposition \ref{offdiagreverse} with $\gamma$ to be chosen and 
\begin{equation} \label{Cst}
1 \leq C_{s,\theta}:= \sum_{k=1}^\infty 2^{-k(s-\theta)} < \infty,
\end{equation}
while $C_\alpha=C_\alpha(n,s,\alpha,\theta)>0$ is given by Lemma \ref{Urel} with $t=\alpha$.
Moreover, for all $r \in \left (0,\frac{\sqrt{n}}{2} \right )$ and all $(x_0,y_0) \in \mathcal{Q}_1$ we define
\begin{equation} \label{distfct}
\widetilde \phi(r,x_0,y_0):= \frac{r}{\textnormal{dist}(B_\frac{r}{2}(x_0),B_\frac{r}{2}(y_0))}.
\end{equation}

\begin{lem} \label{mfusenondiag}
	For any $\lambda \geq \lambda_0$, $r \in \left (0,\frac{\sqrt{n}}{2} \right )$ and any point $(x_0,y_0) \in \mathcal{Q}_1$ satisfying $|x_0-y_0| \geq (3\sqrt{n}+1)r$ and
	\begin{equation} \label{ccllz}
	\mu_\alpha \left ( \left \{(x,y) \in \mathcal{B}_{\frac{\sqrt{n}}{2}r}(x_0,y_0) \mid  {\mathcal{M}}_{\mathcal{B}_{5n}} (U_\alpha^2)(x,y) > N_{\varepsilon,q}^2 \lambda^2 \right \} \right ) \geq \varepsilon \mu_\alpha(\mathcal{B}_{\frac{r}{2}}(x_0,y_0)),
	\end{equation}
	we have
	\begin{align*}
	\mathcal{B}_{\frac{r}{2}}(x_0,y_0)
	\subset & \left \{ (x,y) \in \mathcal{B}_{\frac{r}{2}}(x_0,y_0) \mid  {\mathcal{M}}_{\mathcal{B}_{5n}} (U_\alpha^2)(x,y) > \lambda^2 \right \} \\ 
	\cup & \left \{ (x,y) \in \mathcal{B}_{\frac{r}{2}}(x_0,y_0) \mid  {\mathcal{M}}_{\geq r,\mathcal{B}_{5n}} (U_\alpha^2)(x,x) > 3^{n+2\theta_\alpha}N_d^2 \widetilde \phi(r,x_0,y_0)^{-2(\alpha+\theta_\alpha)} \lambda^2 \right \} \\
	\cup & \left \{ (x,y) \in \mathcal{B}_{\frac{r}{2}}(x_0,y_0) \mid  {\mathcal{M}}_{\geq r,\mathcal{B}_{5n}} (U_\alpha^2)(y,y) > 3^{n+2\theta_\alpha}N_d^2 \widetilde \phi(r,x_0,y_0)^{-2(\alpha+\theta_\alpha)} \lambda^2 \right \} \\
	\cup & \left \{ (x,y) \in \mathcal{B}_{\frac{r}{2}}(x_0,y_0) \mid  {\mathcal{M}}_{\geq r,\mathcal{B}_{5n}} (G_\alpha^{q_0})(x,x) > 3^{n+2\theta_\alpha} \widetilde \phi(r,x_0,y_0)^{-q_0(\alpha+\theta_\alpha)} \lambda^{q_0} \right \} \\
	\cup & \left \{ (x,y) \in \mathcal{B}_{\frac{r}{2}}(x_0,y_0) \mid  {\mathcal{M}}_{\geq r,\mathcal{B}_{5n}} (G_\alpha^{q_0})(y,y) > 3^{n+2\theta_\alpha} \widetilde \phi(r,x_0,y_0)^{-q_0(\alpha+\theta_\alpha)} \lambda^{q_0} \right \}.
	\end{align*}
\end{lem}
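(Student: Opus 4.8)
I would argue by contraposition. Suppose the asserted inclusion fails. Then there is a point $(x^\prime,y^\prime)\in\mathcal{B}_{r/2}(x_0,y_0)$ lying in none of the six sets on the right-hand side, so that
\[ {\mathcal{M}}_{\mathcal{B}_{5n}}(U_\alpha^2)(x^\prime,y^\prime)\leq\lambda^2, \]
while ${\mathcal{M}}_{\geq r,\mathcal{B}_{5n}}(U_\alpha^2)$ evaluated at $(x^\prime,x^\prime)$ and at $(y^\prime,y^\prime)$ is at most $3^{n+2\theta_\alpha}N_d^2\widetilde\phi(r,x_0,y_0)^{-2(\alpha+\theta_\alpha)}\lambda^2$, and ${\mathcal{M}}_{\geq r,\mathcal{B}_{5n}}(G_\alpha^{q_0})$ at those two diagonal points is at most $3^{n+2\theta_\alpha}\widetilde\phi(r,x_0,y_0)^{-q_0(\alpha+\theta_\alpha)}\lambda^{q_0}$. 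The goal is to deduce $\mu_\alpha\big(\{(x,y)\in\mathcal{B}_{\frac{\sqrt n}{2}r}(x_0,y_0): {\mathcal{M}}_{\mathcal{B}_{5n}}(U_\alpha^2)(x,y)>N_{\varepsilon,q}^2\lambda^2\}\big)<\varepsilon\mu_\alpha(\mathcal{B}_{r/2}(x_0,y_0))$, which contradicts $(\ref{ccllz})$.

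First I would reduce to small scales. Since $|x_0-y_0|\geq(3\sqrt n+1)r$, every $(x,y)\in\mathcal{B}_{\frac{\sqrt n}{2}r}(x_0,y_0)$ satisfies $\|(x,y)-(x^\prime,y^\prime)\|<\sqrt n r$, so for every $\rho\geq r$ one has $\mathcal{B}_\rho(x,y)\subset\mathcal{B}_{(\sqrt n+1)\rho}(x^\prime,y^\prime)\subset\mathcal{B}_{(2\sqrt n+1)\rho}(x,y)$; combining this with the volume doubling property of $\mu_\alpha$ and the first displayed bound yields $\dashint_{\mathcal{B}_\rho(x,y)}\chi_{\mathcal{B}_{5n}}U_\alpha^2\,d\mu_\alpha\leq(2\sqrt n+1)^{n+2\theta_\alpha}\lambda^2$ for all such $\rho$, hence ${\mathcal{M}}_{\geq r,\mathcal{B}_{5n}}(U_\alpha^2)(x,y)\leq(2\sqrt n+1)^{n+2\theta_\alpha}\lambda^2$. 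Since $N_{\varepsilon,q}$ dwarfs every dimensional constant appearing here, on $\mathcal{B}_{\frac{\sqrt n}{2}r}(x_0,y_0)$ the event $\{{\mathcal{M}}_{\mathcal{B}_{5n}}(U_\alpha^2)>N_{\varepsilon,q}^2\lambda^2\}$ coincides with $\{{\mathcal{M}}_{<r,\mathcal{B}_{5n}}(U_\alpha^2)>N_{\varepsilon,q}^2\lambda^2\}$. Moreover, for $\rho<r$ and $(x,y)\in\mathcal{B}_{\frac{\sqrt n}{2}r}(x_0,y_0)$ one has $\mathcal{B}_\rho(x,y)\subset\mathcal{B}^*:=\mathcal{B}_{(1+\frac{\sqrt n}{2})r}(x_0,y_0)\subset\mathcal{B}_{5n}$, so ${\mathcal{M}}_{<r,\mathcal{B}_{5n}}(U_\alpha^2)(x,y)\leq\mathcal{M}(U_\alpha^2\chi_{\mathcal{B}^*})(x,y)$. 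By the weak $\tfrac{q_\alpha^\star}{2}$--$\tfrac{q_\alpha^\star}{2}$ estimate of Proposition \ref{Maxfun}(i) (note $q_\alpha^\star\geq 2$), it therefore suffices to show $\int_{\mathcal{B}^*}U_\alpha^{q_\alpha^\star}\,d\mu_\alpha\leq 10^{-5n}\varepsilon\,N_{\varepsilon,q}^{q_\alpha^\star}\lambda^{q_\alpha^\star}\mu_\alpha(\mathcal{B}^*)$, since then that weak estimate, combined with $\mu_\alpha(\mathcal{B}^*)=(2+\sqrt n)^{n+2\theta_\alpha}\mu_\alpha(\mathcal{B}_{r/2}(x_0,y_0))$ and the smallness of $10^{-5n}(2+\sqrt n)^{n+2\theta_\alpha}$ times the (dimensional) weak-type constant, gives exactly the desired bound and hence the contradiction.

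The crux is the estimate for $\int_{\mathcal{B}^*}U_\alpha^{q_\alpha^\star}\,d\mu_\alpha$, which I would obtain from the off-diagonal reverse H\"older inequality. The ball $\mathcal{B}^*$ is off-diagonal: $\textnormal{dist}(B_{(1+\frac{\sqrt n}{2})r}(x_0),B_{(1+\frac{\sqrt n}{2})r}(y_0))\geq(2\sqrt n-1)r\geq\tfrac12(1+\tfrac{\sqrt n}{2})r$, so Proposition \ref{offdiagreverse} applies to $\mathcal{B}^*$ with $\gamma=\tfrac12$ (this fixes the $\gamma$ entering $(\ref{Neps})$). It bounds $\big(\dashint_{\mathcal{B}^*}U_\alpha^{q_\alpha^\star}\,d\mu_\alpha\big)^{1/q_\alpha^\star}$ by $C_{nd}$ times the $L^2(\mathcal{B}^*,\mu_\alpha)$-average of $U_\alpha$, plus $C_{nd}$ times a distance factor of the form $\big(\tfrac{(1+\sqrt n/2)r}{\textnormal{dist}(\cdots)}\big)^{\alpha+\theta_\alpha}$ multiplied by diagonal tail sums centered at $x_0$ and at $y_0$. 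The $L^2$-average is $\leq 3^{(n+2\theta_\alpha)/2}\lambda$ by the sandwiching argument of the previous paragraph (now comparing $\mathcal{B}^*$ with balls about $(x^\prime,y^\prime)$ and using ${\mathcal{M}}_{\mathcal{B}_{5n}}(U_\alpha^2)(x^\prime,y^\prime)\leq\lambda^2$). Each diagonal tail sum is treated as in the proof of Lemma \ref{apppl}: splitting at the index where the radius first exceeds $\sqrt n$, the large-index part is controlled via $(\ref{tailcontrol})$ and $\lambda\geq\lambda_0$, while in the small-index range the relevant diagonal balls have radius $\geq r$ and lie in $\mathcal{B}_{5n}$, so after converting $U$ to $U_\alpha$ and $G$ to $G_\alpha$ by Lemma \ref{Urel} (and H\"older's inequality in the $G_\alpha^{q_0}$ term) one inserts the bounds for ${\mathcal{M}}_{\geq r,\mathcal{B}_{5n}}(U_\alpha^2)$ and ${\mathcal{M}}_{\geq r,\mathcal{B}_{5n}}(G_\alpha^{q_0})$ at $(x^\prime,x^\prime)$ and $(y^\prime,y^\prime)$. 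This produces a factor $\widetilde\phi(r,x_0,y_0)^{-(\alpha+\theta_\alpha)}$, which cancels against the distance factor attached to the tail sums because that factor and $\widetilde\phi(r,x_0,y_0)$ are comparable up to a constant depending only on $n$ (again by $|x_0-y_0|\geq(3\sqrt n+1)r$). Collecting everything gives $\big(\dashint_{\mathcal{B}^*}U_\alpha^{q_\alpha^\star}\,d\mu_\alpha\big)^{1/q_\alpha^\star}\leq c(n)\,C_{nd}C_{s,\theta}C_\alpha N_d\,\lambda$ with $c(n)\ll10^{10n}$, and since $N_{\varepsilon,q}\varepsilon^{1/q_\alpha^\star}=C_{nd}C_{s,\theta}C_\alpha N_d\,10^{10n}$ by $(\ref{Neps})$, this yields the required bound on $\int_{\mathcal{B}^*}U_\alpha^{q_\alpha^\star}\,d\mu_\alpha$.

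I expect the main obstacle to be precisely this last bookkeeping: one must verify that the powers of the distance quantities $\widetilde\phi$ introduced by applying Proposition \ref{offdiagreverse} on $\mathcal{B}^*$ cancel, up to $n$-dependent constants, the inverse powers of $\widetilde\phi(r,x_0,y_0)$ prescribed in the thresholds defining the six sets, and that the small-index/large-index splitting of the tail series goes through with $r$-independent constants (arising from $\lambda\geq\lambda_0$, Lemma \ref{Urel}, and the geometric series $C_{s,\theta}$); the generous factor $10^{10n}$ built into $(\ref{Neps})$ is exactly what absorbs all of these dimensional constants.
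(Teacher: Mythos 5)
Your proposal is correct and follows essentially the same route as the paper's proof: argue by contradiction from a point $(x',y')$ outside all six sets, use it to control all averages at scales $\geq r$, localize the maximal function to a fixed off-diagonal ball comparable to $\mathcal{B}_r(x_0,y_0)$, and conclude via Proposition \ref{offdiagreverse} together with the weak $\frac{q_\alpha^\star}{2}$-$\frac{q_\alpha^\star}{2}$ estimate, splitting the tail sums at scale $\sqrt{n}$ as in Lemma \ref{apppl}. The only deviations are cosmetic (your localization ball $\mathcal{B}_{(1+\sqrt{n}/2)r}(x_0,y_0)$ and $\gamma=\tfrac12$ versus the paper's $\mathcal{B}_{\frac{3\sqrt{n}}{2}r}(x_0,y_0)$ and $\gamma=\tfrac{1}{3\sqrt{n}}$), and your bookkeeping of the $\widetilde\phi$ cancellation and of the constants absorbed by $10^{10n}$ matches the paper's.
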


\begin{proof}
	Assume that (\ref{ccllz}) holds, but that the conclusion is false, so that there exists a point $(x^\prime,y^\prime) \in \mathcal{B}_{\frac{r}{2}}(x_0,y_0)$ such that 
	\begin{align*}
	& {\mathcal{M}}_{\mathcal{B}_{5n}}(U_\alpha^2)(x^\prime,y^\prime) \leq \lambda^2, \\ &  {\mathcal{M}}_{\geq r,\mathcal{B}_{5n}} (U_\alpha^2)(x^\prime,x^\prime) \leq 3^{n+2\theta_\alpha}N_d^2 \widetilde \phi(r,x_0,y_0)^{-2(\alpha+\theta_\alpha)} \lambda^2, \\ &
	 {\mathcal{M}}_{\geq r,\mathcal{B}_{5n}} (U_\alpha^2)(y^\prime,y^\prime) \leq 3^{n+2\theta_\alpha}N_d^2 \widetilde \phi(r,x_0,y_0)^{-2(\alpha+\theta_\alpha)} \lambda^2, \\ &  {\mathcal{M}}_{\geq r,\mathcal{B}_{5n}} (G_\alpha^{q_0})(x^\prime,x^\prime) \leq 3^{n+2\theta_\alpha} \widetilde \phi(r,x_0,y_0)^{-{q_0}(\alpha+\theta_\alpha)} \lambda^{q_0}, \\ &
	 {\mathcal{M}}_{\geq r,\mathcal{B}_{5n}} (G_\alpha^{q_0})(y^\prime,y^\prime) \leq 3^{n+2\theta_\alpha} \widetilde \phi(r,x_0,y_0)^{-{q_0}(\alpha+\theta_\alpha)} \lambda^{q_0}.
	\end{align*}
	Therefore, for any $\rho \geq r$ we have
	\begin{equation} \label{lumhn}
	\dashint_{\mathcal{B}_\rho(x^\prime,y^\prime)}\chi_{\mathcal{B}_{5n}} U_\alpha^2 d\mu_\alpha \leq \lambda^2 ,
	\end{equation}
	\begin{equation} \label{lumhn1}
	\begin{aligned}
	&\dashint_{\mathcal{B}_\rho(x^\prime)} \chi_{\mathcal{B}_{5n}} U_\alpha^2 d\mu_\alpha \leq 3^{n+2\theta_\alpha}N_d^2 \widetilde \phi(r,x_0,y_0)^{-{2}(\alpha+\theta_\alpha)} \lambda^2, \\ & \dashint_{\mathcal{B}_\rho(y^\prime)} \chi_{\mathcal{B}_{5n}} U_\alpha^2 d\mu_\alpha \leq 3^{n+2\theta_\alpha}N_d^2 \widetilde \phi(r,x_0,y_0)^{-{2}(\alpha+\theta_\alpha)}\lambda^2
	\end{aligned}
	\end{equation} 
	and similarly
	\begin{equation} \label{lumhn1x}
	\begin{aligned}
	& \dashint_{\mathcal{B}_\rho(x^\prime)} \chi_{\mathcal{B}_{5n}} G_\alpha^{q_0} d\mu_\alpha \leq 3^{n+2\theta_\alpha}\widetilde \phi(r,x_0,y_0)^{-{q_0}(\alpha+\theta_\alpha)} \lambda^{q_0}, \\ & \dashint_{\mathcal{B}_\rho(y^\prime)} \chi_{\mathcal{B}_{5n}} G_\alpha^{q_0} d\mu_\alpha \leq 3^{n+2\theta_\alpha} \widetilde \phi(r,x_0,y_0)^{-{q_0}(\alpha+\theta_\alpha)}\lambda^{q_0}.
	\end{aligned}
	\end{equation} 
	Since for any $\rho \geq r$ we have $\mathcal{B}_\rho(x_0,y_0) \subset \mathcal{B}_{2\rho}(x^\prime,y^\prime) \subset \mathcal{B}_{3\rho}(x_0,y_0)$, from (\ref{lumhn}) we deduce
	\begin{equation} \label{Uoffdiag}
	\dashint_{\mathcal{B}_\rho(x_0,y_0)} \chi_{\mathcal{B}_{5n}} U_\alpha^2 d\mu_\alpha \leq \frac{\mu_\alpha(\mathcal{B}_{2\rho}(x^\prime,y^\prime))}{\mu_\alpha(\mathcal{B}_\rho(x_0,y_0))} \dashint_{\mathcal{B}_{2\rho}(x^\prime)} \chi_{\mathcal{B}_{5n}} U_\alpha^2 d\mu_\alpha \leq 3^{n+2\theta_\alpha} \lambda^2.
	\end{equation}
	Since for any $\rho \geq r$ we have $\mathcal{B}_\rho(x_0) \subset \mathcal{B}_{2\rho}(x^\prime)$, together with (\ref{lumhn1}) we observe that
	\begin{equation} \label{Ux0}
	\dashint_{\mathcal{B}_\rho(x_0)}\chi_{\mathcal{B}_{5n}} U_\alpha^2 d\mu_\alpha \leq \frac{\mu_\alpha(\mathcal{B}_{2\rho}(x^\prime))}{\mu_\alpha(\mathcal{B}_\rho(x_0))} \dashint_{\mathcal{B}_{2\rho}(x^\prime)}\chi_{\mathcal{B}_{5n}} U_\alpha^2 d\mu_\alpha \leq 6^{n+2\theta_\alpha}N_d^2 \widetilde \phi(r,x_0,y_0)^{-2(\alpha+\theta_\alpha)} \lambda^2 
	\end{equation}
	and similarly by using (\ref{lumhn1x}) instead of (\ref{lumhn1}), we obtain
	\begin{equation} \label{Gx0}
	\dashint_{\mathcal{B}_\rho(x_0)}\chi_{\mathcal{B}_{5n}} G_\alpha^{q_0} d\mu_\alpha \leq \frac{\mu_\alpha(\mathcal{B}_{2\rho}(x^\prime))}{\mu_\alpha(\mathcal{B}_\rho(x_0))} \dashint_{\mathcal{B}_{2\rho}(x^\prime)}\chi_{\mathcal{B}_{5n}} G_\alpha^{q_0} d\mu_\alpha \leq 6^{n+2\theta_\alpha} \widetilde \phi(r,x_0,y_0)^{-{q_0}(\alpha+\theta_\alpha)} \lambda^{q_0}.
	\end{equation}
	By the same reasoning, (\ref{Ux0}) and (\ref{Gx0}) hold also with $x_0$ replaced by $y_0$.
	Next, we claim that 
	\begin{equation} \label{inclusion1}
	\begin{aligned}
	& \left \{ (x,y) \in \mathcal{B}_{\frac{\sqrt{n}}{2}r}(x_0,y_0) \mid  {\mathcal{M}}_{\mathcal{B}_{5n}}( U_\alpha^2 )(x,y) > N_{\varepsilon,q}^2 \lambda^2 \right \} \\ \subset & \left \{ (x,y) \in \mathcal{B}_{\frac{ \sqrt{n}}{2}r}(x_0,y_0) \mid  {\mathcal{M}}_{\mathcal{B}_{\frac{3\sqrt{n}}{2}r}(x_0,y_0)} ( U_\alpha^2 )(x,y) > N_{\varepsilon,q}^2 \lambda^2 \right \}. 
	\end{aligned}
	\end{equation}
	To see this, assume that 
	\begin{equation} \label{menge1}
	(x_1,y_1) \in \left \{ x \in \mathcal{B}_{\frac{\sqrt{n}}{2}r}(x_0,y_0) \mid  {\mathcal{M}}_{\mathcal{B}_{\frac{3\sqrt{n}}{2}r}(x_0,y_0)} ( U_\alpha^2 ) (x,y) \leq N_{\varepsilon,q}^2 \lambda^2 \right \}. 
	\end{equation}
	For $ \rho < \sqrt{n}r$, we have $\mathcal{B}_\rho (x_1,y_1) \subset \mathcal{B}_{\sqrt{n}r}(x_1,y_1) \subset \mathcal{B}_{\frac{3\sqrt{n}}{2}r}(x_0,y_0)$, so that along with $(\ref{menge1})$ we deduce 
	\begin{align*}
	\dashint_{\mathcal{B}_\rho (x_1,y_1)} U_\alpha^2 d\mu_\alpha & \leq  {\mathcal{M}}_{\mathcal{B}_{\frac{3\sqrt{n}}{2}r}(x_0,y_0)} ( U_\alpha^2 )(x_1,y_1) \leq N_{\varepsilon,q}^2 \lambda^2. 
	\end{align*}
	On the other hand, for $\rho \geq \sqrt{n}r$ we have $ \mathcal{B}_\rho (x_1,y_1) \subset \mathcal{B}_{3 \rho}(x^\prime,y^\prime) \subset \mathcal{B}_{5 \rho}(x_1,y_1)$, so that $(\ref{lumhn})$ implies 
	\begin{align*}
	\dashint_{\mathcal{B}_\rho(x_1,y_1)} \chi_{\mathcal{B}_{5n}} U_\alpha^2 d\mu_\alpha \leq \frac{\mu_\alpha(\mathcal{B}_{5 \rho} (x_1,y_1))}{\mu_\alpha(\mathcal{B}_\rho(x_1,y_1))} \dashint_{\mathcal{B}_{3 \rho} (x^\prime,y^\prime)} \chi_{\mathcal{B}_{5n}} U_\alpha^2 d\mu_\alpha \leq 5^{n+2\theta_\alpha} \lambda^2 \leq N_{\varepsilon,q}^2 \lambda^2.
	\end{align*}
	Thus, we have $$ (x_1,y_1) \in \left \{ (x,y) \in \mathcal{B}_r(x_0,y_0) \mid  {\mathcal{M}}_{\mathcal{B}_{5n}}( U_\alpha^2 )(x,y) \leq N_{\varepsilon,q}^2 \lambda^2 \right \} ,$$ 
	which implies $(\ref{inclusion1})$.
As in the proof of Lemma \ref{apppl}, let $l \in \mathbb{N}$ be determined by $2^{l-1} r < \sqrt{n} \leq 2^{l} r$, note that $l \geq 2$. Then for any $k < l$, by (\ref{Ux0}) and (\ref{Gx0}) we have
\begin{equation} \label{UGx}
\begin{aligned}
&\dashint_{\mathcal{B}_{2^k\frac{3\sqrt{n}}{2} r}(x_0)} U_\alpha^2 d\mu_\alpha \leq 6^{n+2\theta_\alpha}\lambda^2 \widetilde \phi(r,x_0,y_0)^{-2(\alpha+\theta_\alpha)}, \\ &\dashint_{\mathcal{B}_{2^k\frac{3\sqrt{n}}{2} r}(x_0)} G_\alpha^{q_0} d\mu \leq 6^{n+2\theta_\alpha}\lambda^{q_0} \delta^{q_0} \widetilde \phi(r,x_0,y_0)^{-2(\alpha+\theta_\alpha)}.
\end{aligned}
\end{equation}
Moreover, in view of (\ref{tailcontrol}), the inclusions $$\mathcal{B}_{2^k \frac{n}{2}}(x_0) \subset \mathcal{B}_{2^{k+l-1} \frac{3\sqrt{n}}{2} r}(x_0) \subset \mathcal{B}_{2^k \frac{3n}{2}}(x_0) \subset \mathcal{B}_{2^k 5n}$$ and the fact that $\widetilde \phi(r,x_0,y_0) \leq 1$, we have
\begin{equation} \label{Ut}
\begin{aligned}
& \sum_{k=l}^\infty 2^{-k(s-\theta)} \left ( \dashint_{\mathcal{B}_{2^k \frac{3\sqrt{n}}{2}r}(x_0)} U^2 d\mu \right )^\frac{1}{2} \\ = & 2^{-(l-1)(s-\theta)} \sum_{k=1}^\infty 2^{-k(s-\theta)} \left ( \dashint_{\mathcal{B}_{2^{k+l-1}\frac{3\sqrt{n}}{2}r}(x_0)} U^2 d\mu \right )^\frac{1}{2} \\
\leq & \sum_{k=1}^\infty 2^{-k(s-\theta)} \left (\frac{\mu(\mathcal{B}_{2^k 5n})}{\mu \left (\mathcal{B}_{2^k \frac{n}{2}} \right)} \dashint_{\mathcal{B}_{2^k 5n}} U^2 d\mu \right )^\frac{1}{2} \\
\leq & {10}^{\frac{n}{2}+\theta} \lambda_0 \leq {10}^{\frac{n}{2}+\theta} \lambda_0 \widetilde \phi(r,x_0,y_0)^{-(\alpha+\theta_\alpha)}.
\end{aligned}
\end{equation}
Together with (\ref{UGx}) and the assumption that $\lambda \geq \lambda_0$, along with using Lemma \ref{Urel}, we obtain
\begin{equation} \label{Ulambdax}
\begin{aligned}
& \sum_{k=1}^\infty 2^{-k(s-\theta)} \left ( \dashint_{\mathcal{B}_{2^k \frac{3\sqrt{n}}{2}r}(x_0)} U^2 d\mu \right )^\frac{1}{2} \\ \leq & C_\alpha \sum_{k=1}^{l-1} 2^{-k(s-\theta)} \left ( \dashint_{\mathcal{B}_{2^k \frac{3\sqrt{n}}{2} r}(x_0)} U_\alpha^2 d\mu_\alpha \right )^\frac{1}{2} + \sum_{k=l}^\infty 2^{-k(s-\theta)} \left ( \dashint_{\mathcal{B}_{2^k \frac{3\sqrt{n}}{2}r}(x_0)} U^2 d\mu \right )^\frac{1}{2} \\
\leq & 6^{\frac{n}{2}+\theta_\alpha} C_{s,\theta}C_\alpha N_d^2 \widetilde \phi(r,x_0,y_0)^{-(\alpha+\theta_\alpha)} \lambda + {10}^{\frac{n}{2}+\theta} \widetilde \phi(r,x_0,y_0)^{-(\alpha+\theta_\alpha)}\lambda_0 \\ \leq & {10}^{n} C_{s,\theta} C_\alpha N_d^2 \widetilde \phi(r,x_0,y_0)^{-(\alpha+\theta_\alpha)} \lambda.
\end{aligned}
\end{equation}
By a similar reasoning as above, (\ref{Ut}) holds also with $U$ replaced by $G$, so that along with Lemma \ref{Urel} and H\"older's inequality, we deduce
\begin{equation} \label{Glambdax}
\begin{aligned}
& \sum_{k=1}^\infty 2^{-k(s-\theta)} \left ( \dashint_{\mathcal{B}_{2^k \frac{3\sqrt{n}}{2}r}(x_0)} G^2 d\mu \right )^\frac{1}{2} \\
\leq & C_\alpha \sum_{k=1}^{l-1} 2^{-k(s-\theta)} \left ( \dashint_{\mathcal{B}_{2^k \frac{3\sqrt{n}}{2} r}(x_0)} G_\alpha^{2} d\mu_\alpha \right )^\frac{1}{2} + \sum_{k=l}^\infty 2^{-k(s-\theta)} \left ( \dashint_{\mathcal{B}_{2^k \frac{3\sqrt{n}}{2} r}(x_0)} G^2 d\mu \right )^\frac{1}{2} \\  
\leq & C_\alpha \sum_{k=1}^{l-1} 2^{-k(s-\theta)} \left ( \dashint_{\mathcal{B}_{2^k \frac{3\sqrt{n}}{2} r}(x_0)} G_\alpha^{q_0} d\mu_\alpha \right )^\frac{1}{q_0} + \sum_{k=l}^\infty 2^{-k(s-\theta)} \left ( \dashint_{\mathcal{B}_{2^k \frac{3\sqrt{n}}{2} r}(x_0)} G^2 d\mu \right )^\frac{1}{2} \\ 
\leq & {10}^{n} C_{s,\theta} C_\alpha \widetilde \phi(r,x_0,y_0)^{-(\alpha+\theta_\alpha)} \lambda.
\end{aligned}
\end{equation}
Again, by the same arguments as above (\ref{Ulambdax}) and (\ref{Glambdax}) also hold for $x_0$ replaced by $y_0$.
	Therefore, together with the weak $\frac{q_\alpha^\star}{2}-\frac{q_\alpha^\star}{2}$ estimate for the Hardy-Littlewood maximal function, Proposition \ref{offdiagreverse} with $\gamma=\frac{1}{3\sqrt{n}}$, (\ref{Uoffdiag}), (\ref{Ulambdax}), (\ref{Gx0}), (\ref{Glambdax}) and taking into account (\ref{Neps}), we arrive at
	\begin{align*}
	& \mu_\alpha  \left (\left \{ (x,y) \in \mathcal{B}_{\frac{\sqrt{n}}{2}r}(x_0,y_0) \mid  {\mathcal{M}}_{\mathcal{B}_{5n}}( U_\alpha^2 )(x,y) > N_{\varepsilon,q}^2 \lambda^2 \right \} \right ) \\ \leq & \mu_\alpha  \left (\left \{ (x,y) \in \mathcal{B}_{\frac{ \sqrt{n}}{2}r}(x_0,y_0) \mid  {\mathcal{M}}_{\mathcal{B}_{\frac{3\sqrt{n}}{2}r}(x_0,y_0)} ( U_\alpha^2 )(x,y) > N_{\varepsilon,q}^2 \lambda^2 \right \} \right ) \\
	\leq & N_{\varepsilon,q}^{-q_\alpha^\star} \lambda^{-q_\alpha^\star} \int_{\mathcal{B}_{\frac{3\sqrt{n}}{2}r}(x_0,y_0)} U_\alpha^{q_\alpha^\star} d\mu_\alpha \\
	\leq & N_{\varepsilon,q}^{-q_\alpha^\star} \lambda^{-q\star}3^{q_\alpha^\star} C_{nd}^{q_\alpha^\star} \mu_\alpha \left(\mathcal{B}_{\frac{3\sqrt{n}}{2}r}(x_0,y_0) \right ) \Bigg [ \left (\dashint_{\mathcal{B}_{\frac{3\sqrt{n}}{2}r}(x_0,y_0)} U_\alpha^{2} d\mu_\alpha \right )^\frac{q_\alpha^\star}{2} \\
	& + \left ( \frac{3\sqrt{n}r/2}{\textnormal{dist} \left (B_{\frac{3\sqrt{n}}{2}r}(x_0),B_{\frac{3\sqrt{n}}{2}r}(y_0) \right )} \right )^{q_\alpha^\star (\alpha+\theta_\alpha)} \Bigg ( \sum_{k=1}^\infty 2^{-k(s-\theta)} \left ( \dashint_{\mathcal{B}_{2^k\frac{3\sqrt{n}}{2}r}(x_0)} U^2 d\mu \right )^\frac{1}{2} \\ & \text{ }+ \left (\dashint_{\mathcal{B}_{3 \sqrt{n} r}(x_0)} G_\alpha^{q_0} d\mu_\alpha \right )^\frac{1}{q_0} + \sum_{k=1}^\infty 2^{-k(s-\theta)} \left ( \dashint_{\mathcal{B}_{2^k\frac{3\sqrt{n}}{2}r}(x_0)} G^2 d\mu \right )^\frac{1}{2} \Bigg )^{q_\alpha^\star} \\
	& + \left ( \frac{3\sqrt{n}r/2}{\textnormal{dist} \left (B_{\frac{3\sqrt{n}}{2}r}(x_0),B_{\frac{3\sqrt{n}}{2}r}(y_0) \right )} \right )^{q_\alpha^\star (\alpha+\theta_\alpha)} \Bigg ( \sum_{k=1}^\infty 2^{-k(s-\theta)} \left ( \dashint_{\mathcal{B}_{2^k \frac{3\sqrt{n}}{2}r}(y_0)} U^2 d\mu \right )^\frac{1}{2} \\ & \text{ }+ \left (\dashint_{\mathcal{B}_{3\sqrt{n}r}(y_0)} G_\alpha^{q_0} d\mu_\alpha \right )^\frac{1}{q_0} + \sum_{k=1}^\infty 2^{-k(s-\theta)} \left ( \dashint_{\mathcal{B}_{2^k \frac{3\sqrt{n}}{2}r}(y_0)} G^2 d\mu \right )^\frac{1}{2} \Bigg )^{q_\alpha^\star} \Bigg ] \\
	\leq & N_{\varepsilon,q}^{-q\star} \lambda^{-q\star} 3^{q_\alpha^\star} C_{nd}^{q_\alpha^\star} \left (3\sqrt{n} \right )^{n+2\theta_\alpha} \mu_\alpha \left(\mathcal{B}_{\frac{r}{2}}(x_0,y_0) \right ) \bigg (3^{(\frac{n}{2}+\theta_\alpha)q_\alpha^\star} \lambda^{q_\alpha^\star} \\
	& +6^{q_\alpha^\star} (9n)^{q_\alpha^\star(\alpha+\theta_\alpha)} \widetilde \phi(r,x_0,y_0)^{q_\alpha^\star (\alpha+\theta_\alpha)} {10}^{n q_\alpha^\star} C_{s,\theta}^{q_\alpha^\star} C_\alpha^{q_\alpha^\star} N_d^{q_\alpha^\star} \widetilde \phi(r,x_0,y_0)^{-q_\alpha^\star(\alpha+\theta_\alpha)} \lambda^{q_\alpha^\star} \bigg ) \\
	< & \varepsilon \mu_\alpha \left(\mathcal{B}_{\frac{r}{2}}(x_0,y_0) \right ),
	\end{align*}
	which contradicts (\ref{ccllz}) and thus finishes the proof.
\end{proof}

Next, we restate the previous Lemma in terms of cubes instead of balls, which is vital in order to make it applicable in the context of Calder\'on-Zygmund cube decompositions as used in the covering argument in \cite[Section 7]{MeV}. In analogy to the quantity $\widetilde \phi(r,x_0,y_0)$ defined in (\ref{distfct}), for any $r \in \left (0,\frac{\sqrt{n}}{2} \right )$ and all $x_0,y_0 \in \mathbb{R}^n$ with $|x_0-y_0|>\sqrt{n}r$, we define the quantity
\begin{equation} \label{phidef}
\phi(r,x_0,y_0):=\frac{r}{\textnormal{dist}(Q_r(x_0),Q_r(y_0))}.
\end{equation}

Since the proof of the following result works almost exactly like the one in \cite[Corollary 6.4]{MeV} by using our Lemma \ref{mfusenondiag} instead of \cite[Lemma 6.3]{MeV} and by replacing in \cite{MeV} the measure $\mu$ by $\mu_\alpha$, the function $U$ by $U_\alpha$ and the parameters $s$ and $\theta$ by $\alpha$ and $\theta_\alpha$, respectively, we omit the proof and instead refer to \cite[Corollary 6.4]{MeV}.

\begin{cor} \label{mfusenondiagcubes}
	For any $\lambda \geq \lambda_0$, $r \in \left (0,\frac{\sqrt{n}}{2} \right )$ and any point $(x_0,y_0) \in \mathcal{Q}_1$ satisfying $|x_0-y_0| \geq (3\sqrt{n}+1)r$ and
	\begin{equation} \label{ccllx64}
	\mu_\alpha \left ( \left \{(x,y) \in \mathcal{Q}_{r}(x_0,y_0) \mid  {\mathcal{M}}_{\mathcal{B}_{5n}}(U_\alpha^2)(x,y) > N_{\varepsilon,q}^2 \lambda^2 \right \} \right ) > \varepsilon \mu_\alpha(\mathcal{Q}_r(x_0,y_0)),
	\end{equation}
	we have
	\begin{align*}
	& \mu_\alpha (\mathcal{Q}_{r}(x_0,y_0)) \\
	\leq & (\sqrt{n})^{n+2\theta_\alpha} \bigg( \mu_\alpha \left (\left \{ (x,y) \in \mathcal{Q}_r(x_0,y_0) \mid  {\mathcal{M}}_{\mathcal{B}_{5n}} (U_\alpha^2)(x,y) > \lambda^2 \right \} \right ) \\ 
	& + \phi(r,x_0,y_0)^{n-2 \theta_\alpha} \mu_\alpha \left (\left \{ (x,y) \in \mathcal{Q}_r(x_0) \mid  {\mathcal{M}}_{\mathcal{B}_{5n}} (U_\alpha^2)(x,y) > N_d^2 \phi(r,x_0,y_0)^{-2(\theta_\alpha+\alpha)} \lambda^2 \right \} \right ) \\
	& + \phi(r,x_0,y_0)^{n-2 \theta_\alpha} \mu_\alpha \left (\left \{ (x,y) \in \mathcal{Q}_r(y_0) \mid  {\mathcal{M}}_{\mathcal{B}_{5n}} (U_\alpha^2)(x,y) > N_d^2 \phi(r,x_0,y_0)^{-2(\theta_\alpha+\alpha)} \lambda^2 \right \} \right ) \\
	& + \phi(r,x_0,y_0)^{n-2 \theta_\alpha} \mu_\alpha \left (\left \{ (x,y) \in \mathcal{Q}_r(x_0) \mid  {\mathcal{M}}_{\mathcal{B}_{5n}} (G_\alpha^{q_0})(x,y) > \phi(r,x_0,y_0)^{-{q_0}(\theta_\alpha+\alpha)} \lambda^{q_0} \right \} \right ) \\
	& + \phi(r,x_0,y_0)^{n-2 \theta_\alpha} \mu_\alpha \left (\left \{ (x,y) \in \mathcal{Q}_r(y_0) \mid  {\mathcal{M}}_{\mathcal{B}_{5n}} (G_\alpha^{q_0})(x,y) > \phi(r,x_0,y_0)^{-{q_0}(\theta_\alpha+\alpha)}\lambda^{q_0} \right \} \right ) \bigg ).
	\end{align*}
\end{cor}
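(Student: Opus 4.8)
The plan is to deduce Corollary \ref{mfusenondiagcubes} from Lemma \ref{mfusenondiag} by means of the elementary geometric comparison between euclidean balls and cubes, exactly as in the passage from \cite[Lemma 6.3]{MeV} to \cite[Corollary 6.4]{MeV}. The basic facts I would use are the sandwiching $\mathcal{B}_{\rho/2}(z_0,w_0) \subset \mathcal{Q}_\rho(z_0,w_0) \subset \mathcal{B}_{\frac{\sqrt n}{2}\rho}(z_0,w_0)$ for all $(z_0,w_0) \in \mathbb{R}^{2n}$ and all $\rho>0$, which follows from $B_{\rho/2}(z) \subset Q_\rho(z) \subset B_{\frac{\sqrt n}{2}\rho}(z)$ in $\mathbb{R}^n$, together with the volume doubling identity $\mu_\alpha(\mathcal{B}_{\sqrt n \rho}(\,\cdot\,)) = (\sqrt n)^{n+2\theta_\alpha}\mu_\alpha(\mathcal{B}_\rho(\,\cdot\,))$ from Proposition \ref{doublingmeasure}, as well as the analogue of the computation (\ref{comparable}) for cubes, which gives $\mu_\alpha(\mathcal{Q}_r(x_0,y_0)) \asymp r^{2n}\,\textnormal{dist}(Q_r(x_0),Q_r(y_0))^{-(n-2\theta_\alpha)}$ when $x_0,y_0$ are far apart, whereas $\mu_\alpha(\mathcal{Q}_r(x_0)) \asymp r^{n+2\theta_\alpha}$; the ratio of these two quantities is precisely of order $\phi(r,x_0,y_0)^{n-2\theta_\alpha}$.

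First I would verify that the hypothesis (\ref{ccllx64}) of the corollary forces the hypothesis (\ref{ccllz}) of Lemma \ref{mfusenondiag}, with the same $\lambda$, $r$, $x_0$, $y_0$ (the distance condition $|x_0-y_0|\geq(3\sqrt n+1)r$ being identical in both). Indeed, since $\mathcal{Q}_r(x_0,y_0)\subset\mathcal{B}_{\frac{\sqrt n}{2}r}(x_0,y_0)$ the level set over $\mathcal{Q}_r(x_0,y_0)$ in (\ref{ccllx64}) is contained in the corresponding level set over $\mathcal{B}_{\frac{\sqrt n}{2}r}(x_0,y_0)$, while $\mathcal{B}_{r/2}(x_0,y_0)\subset\mathcal{Q}_r(x_0,y_0)$ gives $\mu_\alpha(\mathcal{Q}_r(x_0,y_0))\geq\mu_\alpha(\mathcal{B}_{r/2}(x_0,y_0))$; combining these two inclusions with (\ref{ccllx64}) yields exactly (\ref{ccllz}). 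Lemma \ref{mfusenondiag} then produces the set inclusion for $\mathcal{B}_{r/2}(x_0,y_0)$. To promote this to the full cube I would cover $\mathcal{Q}_r(x_0,y_0)$ by $\mathcal{B}_{\frac{\sqrt n}{2}r}(x_0,y_0)$ (re-running Lemma \ref{mfusenondiag} on this slightly enlarged configuration, which is harmless because the enlarged diagonal balls still lie inside $\mathcal{B}_{5n}$ under the standing size restrictions), and then absorb the volume ratio $\mu_\alpha(\mathcal{B}_{\frac{\sqrt n}{2}r})/\mu_\alpha(\mathcal{B}_{r/2}) = (\sqrt n)^{n+2\theta_\alpha}$ into the prefactor via doubling.

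Next I would take $\mu_\alpha$-measures of the inclusion and perform the bookkeeping needed to match the stated format: (i) replace the truncated maximal functions $\mathcal{M}_{\geq r,\mathcal{B}_{5n}}$ by $\mathcal{M}_{\mathcal{B}_{5n}}$, using $\mathcal{M}_{\geq r}\leq\mathcal{M}$, which only enlarges the diagonal level sets; (ii) for the four diagonal correction terms, which in Lemma \ref{mfusenondiag} constrain $\mathcal{M}_{\mathcal{B}_{5n}}(U_\alpha^2)$ and $\mathcal{M}_{\mathcal{B}_{5n}}(G_\alpha^{q_0})$ at the diagonal points $(x,x)$ and $(y,y)$ with $(x,y)$ ranging over $\mathcal{B}_{r/2}(x_0,y_0)$, exploit the product structure $\mathcal{B}_{r/2}(x_0,y_0)=B_{r/2}(x_0)\times B_{r/2}(y_0)$ and Fubini to rewrite the associated $\mu_\alpha$-measures in terms of level sets living over the diagonal cubes $\mathcal{Q}_r(x_0)$ and $\mathcal{Q}_r(y_0)$, the factors $\phi(r,x_0,y_0)^{n-2\theta_\alpha}$ being exactly the ratio $\mu_\alpha(\mathcal{Q}_r(x_0,y_0))/\mu_\alpha(\mathcal{Q}_r(x_0))$ computed above; and (iii) replace the ball-based quantity $\widetilde\phi$ of (\ref{distfct}) by the cube-based quantity $\phi$ of (\ref{phidef}), these being comparable up to a dimensional constant because $\textnormal{dist}(B_{\rho/2}(x_0),B_{\rho/2}(y_0))$ and $\textnormal{dist}(Q_\rho(x_0),Q_\rho(y_0))$ differ only by a factor depending on $n$, so that the resulting change in the thresholds $N_d^2\widetilde\phi^{-2(\alpha+\theta_\alpha)}\lambda^2$ and $\widetilde\phi^{-q_0(\alpha+\theta_\alpha)}\lambda^{q_0}$ can be compensated by the (harmless) freedom in the dimensional constants.

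The main obstacle I anticipate is not conceptual but purely a matter of careful bookkeeping: keeping all the geometric constants ($\sqrt n$, $3\sqrt n+1$, $5n$, the radius inflation needed to remain inside $\mathcal{B}_{5n}$, and the $\phi$ versus $\widetilde\phi$ comparisons) mutually consistent while passing between balls and cubes, and in particular making sure the diagonal correction terms emerge with precisely the exponents $n-2\theta_\alpha$ on $\phi$, $-2(\alpha+\theta_\alpha)$ on the $U_\alpha$ thresholds and $-q_0(\alpha+\theta_\alpha)$ on the $G_\alpha$ thresholds. Since this entire computation is carried out in detail in \cite[Corollary 6.4]{MeV} for the quadruple $(\mu,U,s,\theta)$ and is manifestly insensitive to the particular values of the exponents involved, I would simply transcribe that argument verbatim, replacing $(\mu,U,s,\theta)$ by $(\mu_\alpha,U_\alpha,\alpha,\theta_\alpha)$ throughout and invoking our Lemma \ref{mfusenondiag} in place of \cite[Lemma 6.3]{MeV}.
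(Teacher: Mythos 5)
Your proposal matches the paper's treatment exactly: the paper itself omits the proof and refers to \cite[Corollary 6.4]{MeV}, to be transcribed with $(\mu,U,s,\theta)$ replaced by $(\mu_\alpha,U_\alpha,\alpha,\theta_\alpha)$ and with Lemma \ref{mfusenondiag} in place of \cite[Lemma 6.3]{MeV}, which is precisely what you do. Your sketch of the mechanics (deducing (\ref{ccllz}) from (\ref{ccllx64}) via the ball--cube sandwich, passing from $\mathcal{B}_{r/2}$ to $\mathcal{Q}_r$ by doubling to produce the $(\sqrt{n})^{n+2\theta_\alpha}$ prefactor, converting the diagonal correction terms via the product structure and the ratio $\mu_\alpha(\mathcal{Q}_r(x_0,y_0))/\mu_\alpha(\mathcal{Q}_r(x_0)) \asymp \phi^{n-2\theta_\alpha}$, and comparing $\widetilde\phi$ with $\phi$) is consistent with that argument.
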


\section{Level set estimates} \label{cover}
By combining the good-$\lambda$ inequalities given by Lemma \ref{mfuse} and Corollary \ref{mfusenondiagcubes} with a technically involved covering argument, it is possible to deduce a level set estimate which will then imply the desired $L^p$ estimate for $U_\alpha$ with respect to $\mu_\alpha$. Since the mentioned covering argument was already implemented in great detail in \cite[Section 7]{MeV} and up to some minor straightforward adjustments the argument needed in our setting works exactly like the one in \cite[Section 7]{MeV}, we omit most of the technical details leading to this level set estimate. \par More precisely, in the arguments of \cite[Section 7]{MeV}, we need to replace the ball $\mathcal{B}_{4n}$ by $\mathcal{B}_{5n}$, the function $U$ by $U_\alpha$, the measure $\mu$ by $\mu_\alpha$ and the parameters $s$ and $\theta$ by $\alpha$ and $\theta_\alpha$, respectively, while the good-$\lambda$ inequatilities given by \cite[Lemma 6.1, Corollary 6.4]{MeV} need to be replaced by our corresponding good-$\lambda$ inequatilities given by Lemma \ref{mfuse} and Corollary \ref{mfusenondiagcubes}. If in addition, we take into account our different definition (\ref{tailcontrol}) of the number $\lambda_0$ in comparison to \cite[Formula (5.10)]{MeV}, we arrive at the following level set estimate, which corresponds to \cite[Corollary 7.8]{MeV}.
\begin{prop} \label{levelsetest1}
	Assume that the estimate (\ref{hdest}) is satisfied in any ball contained in $B_{5n}$ with respect to $\alpha$ and that the estimate (\ref{J}) is satisfied in any ball contained in $B_{5n}$ with respect to $q$. Then there exists some $\varepsilon_0=\varepsilon_0(n,\theta_\alpha) \in (0,1)$, such that the following is true. Let $\varepsilon \in (0,\varepsilon_0]$ and let $\delta=\delta(\varepsilon,n,s,\alpha,\theta,\Lambda,m)>0$ be given by Lemma \ref{mfuse}. Then after choosing the number $M_0=M_0(n,\theta_\alpha)>0$ in (\ref{tailcontrol}) large enough, for any $\lambda \geq \lambda_0$ we have
	\begin{align*}
		& \mu_\alpha \left (\left \{(x,y) \in \mathcal{Q}_{1} \mid  {\mathcal{M}}_{\mathcal{B}_{5n}} (U_\alpha^2)(x,y) > N_{\varepsilon,q}^2 \lambda^2 \right \} \right ) \\ \leq & C \left (\frac{\varepsilon}{\lambda^2} \int_{\mathcal{Q}_1 \cap \left \{ {\mathcal{M}}_{\mathcal{B}_{5n}} (U_\alpha^2) > \lambda^2 \right \}} {\mathcal{M}}_{\mathcal{B}_{5n}} (U_\alpha^2 ) d\mu_\alpha + \frac{1}{\delta^{q_0} \lambda^{q_0}} \int_{\mathcal{Q}_1 \cap \left \{ {\mathcal{M}}_{\mathcal{B}_{5n}} (G_\alpha^{q_0}) > \delta^{q_0} \lambda^{q_0} \right \}} {\mathcal{M}}_{\mathcal{B}_{5n}} (G_\alpha^{q_0} ) d\mu_\alpha \right ),
	\end{align*}
	where $C=C(n,\alpha,\theta_\alpha)>0$.
\end{prop}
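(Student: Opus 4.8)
The plan is to transcribe the Calder\'on--Zygmund covering argument of \cite[Section 7]{MeV} with the substitutions $U \rightsquigarrow U_\alpha$, $G \rightsquigarrow G_\alpha$, $\mu \rightsquigarrow \mu_\alpha$, $s \rightsquigarrow \alpha$, $\theta \rightsquigarrow \theta_\alpha$ and $\mathcal{B}_{4n} \rightsquigarrow \mathcal{B}_{5n}$, so that the two good-$\lambda$ inequalities driving the argument become Lemma \ref{mfuse} near the diagonal and Corollary \ref{mfusenondiagcubes} far from the diagonal, replacing \cite[Lemma 6.1, Corollary 6.4]{MeV}. Fixing $\lambda \geq \lambda_0$, I would run a dyadic stopping-time decomposition on $\{(x,y) \in \mathcal{Q}_1 \mid \mathcal{M}_{\mathcal{B}_{5n}}(U_\alpha^2)(x,y) > N_{\varepsilon,q}^2 \lambda^2\}$, selecting maximal dyadic cubes $\mathcal{Q}_{r_i}(x_i,y_i)$ on which the density condition \eqref{ccllx64} (or the diagonal one \eqref{ccll}) holds, so that the good-$\lambda$ dichotomy applies to each. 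Since every ball and cube produced along the way stays inside $\mathcal{B}_{5n}$, the standing hypotheses of the proposition ensure that \eqref{hdest} and \eqref{J} hold wherever they are invoked, hence Lemma \ref{mfuse} (through Lemma \ref{apppl}) and Corollary \ref{mfusenondiagcubes} (through Proposition \ref{offdiagreverse}) are available throughout.

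For each selected cube one splits into the near-diagonal case $|x_i - y_i| < (3\sqrt{n}+1) r_i$ and its complement. In the near-diagonal case Lemma \ref{mfuse} gives the pointwise inclusion \eqref{incly} and hence controls $\mu_\alpha(\mathcal{Q}_{r_i})$ by the parts of the cube where $\mathcal{M}_{\mathcal{B}_{5n}}(U_\alpha^2) > \lambda^2$ and where $\mathcal{M}_{\mathcal{B}_{5n}}(G_\alpha^{q_0}) > \delta^{q_0}\lambda^{q_0}$; in the far-diagonal case Corollary \ref{mfusenondiagcubes} bounds $\mu_\alpha(\mathcal{Q}_{r_i})$ by the ``good'' level set of $U_\alpha$ at level $\lambda^2$ plus four diagonal correction terms supported on $\mathcal{Q}_{r_i}(x_i)$ and $\mathcal{Q}_{r_i}(y_i)$, each weighted by $\phi(r_i,x_i,y_i)^{n-2\theta_\alpha}$ with a threshold deflated by $\phi^{-2(\alpha+\theta_\alpha)}$ (for $U_\alpha$) or $\phi^{-q_0(\alpha+\theta_\alpha)}$ (for $G_\alpha$). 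Summing over the pairwise disjoint selected cubes, the $U_\alpha$- and $G_\alpha$-terms at the fixed thresholds $\lambda^2$ and $\delta^{q_0}\lambda^{q_0}$ build the two integrals on the right-hand side, once the crude single-level measure estimate is refined as in \cite[Section 7]{MeV} so as to produce $\frac{1}{\lambda^2}\int_{\{\mathcal{M}(U_\alpha^2) > \lambda^2\}} \mathcal{M}(U_\alpha^2)\,d\mu_\alpha$ rather than $\mu_\alpha(\{\mathcal{M}(U_\alpha^2) > \lambda^2\})$. The diagonal-centred correction cubes $\mathcal{Q}_{r_i}(x_i)$, $\mathcal{Q}_{r_i}(y_i)$ are not among the selected cubes, so they must be reinjected: one covers the diagonal of $\mathcal{Q}_1$ at each dyadic scale by a bounded-overlap family of diagonal cubes, reruns the stopping-time argument there using Lemma \ref{mfuse}, and sums over scales, the convergence of $C_{s,\theta} = \sum_k 2^{-k(s-\theta)} < \infty$ (valid since $\theta < s$) together with the gain in the prefactor $\phi^{n-2\theta_\alpha}$ over the loss inside the thresholds making the resulting double series summable, after $M_0$ in \eqref{tailcontrol} is taken large enough to absorb the tail terms of $\lambda_0$.

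The hard part, exactly as in \cite{MeV}, is this reabsorption: one has to check that the portion of the bad set generated by the diagonal correction terms re-enters the left-hand side, after summation over all dyadic scales, with a constant strictly below $1$, so that it can be moved to the left and the estimate closed; this is where the smallness of $\varepsilon$ (hence of $\delta$, through Lemma \ref{mfuse}) and the largeness of $M_0$ are used, and it forces $\varepsilon \leq \varepsilon_0(n,\theta_\alpha)$. The genuinely new content relative to \cite{MeV} sits entirely upstream --- Lemma \ref{mfuse}, whose proof needs the extra hypotheses \eqref{hdest} and \eqref{J} because at the higher order $\alpha > s$ the comparison function $V_\alpha$ is controlled only after invoking \eqref{hdest}, and Proposition \ref{offdiagreverse}, which carries the diagonal tail of $U$ at the base order $s$ --- so once these are fed into the covering, every remaining step is formally identical to \cite[Section 7]{MeV}, which is why, as the authors do, I would only record the resulting level set estimate here and leave the verbatim covering details to \cite[Section 7]{MeV}, to be exploited via a standard measure-theoretic iteration against $\lambda\,d\lambda$ in the next section.
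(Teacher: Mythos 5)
Your proposal is correct and follows essentially the same route as the paper: the paper likewise only records the level set estimate and defers the covering argument to \cite[Section 7]{MeV} with exactly the substitutions you list ($U \rightsquigarrow U_\alpha$, $\mu \rightsquigarrow \mu_\alpha$, $s,\theta \rightsquigarrow \alpha,\theta_\alpha$, $\mathcal{B}_{4n} \rightsquigarrow \mathcal{B}_{5n}$), with Lemma \ref{mfuse} and Corollary \ref{mfusenondiagcubes} replacing the good-$\lambda$ inequalities of \cite{MeV}, and with $\varepsilon_0$ coming from the adapted reabsorption constraint and $M_0$ from the initial crude level set bound at $\lambda_0$. Your identification of where the new hypotheses \eqref{hdest} and \eqref{J} enter (only upstream, through Lemma \ref{mfuse} and Proposition \ref{offdiagreverse}) matches the paper's presentation.
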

We remark that the number $\varepsilon_0$ arises from the restriction \cite[Formula (7.26)]{MeV} adapted to our setting, that is, we have
$$\varepsilon_0=\frac{1}{4 (\sqrt{n})^{n+2\theta_\alpha}c}$$ for some $c=c(n,\theta_\alpha) \geq 1$. In addition, the number $M_0$ needs to be chosen large enough in \cite[Formula (7.7)]{MeV} adapted to our setting. More precisely, in our setting \cite[Formula (7.7)]{MeV} needs to be replaced by
	\begin{align*}
		& \mu_\alpha \left (\left \{(x,y) \in \mathcal{Q}_1 \mid  {\mathcal{M}}_{\mathcal{B}_{5n}}(U_\alpha^2)(x,y) > N_d^2 \lambda^2 \right \} \right ) \\
		& + \mu_\alpha \left (\left \{(x,y) \in \mathcal{Q}_1 \mid  {\mathcal{M}}_{\mathcal{B}_{5n}} \left (G_\alpha^{q_0} \right )(x,y) > \lambda^{q_0} \right \} \right ) \\
		\leq & \frac{C_1}{N_{d}^2 \lambda^2} \int_{\mathcal{B}_{5n}} U_\alpha^2 d\mu_\alpha + \frac{C_1}{\lambda^{q_0}} \int_{\mathcal{B}_{5n}} G_\alpha^{q_0} d\mu_\alpha \\
		\leq & \frac{C_1}{N_{d}^2 \lambda_0^2} \int_{\mathcal{B}_{5n}} U_\alpha^2 d\mu_\alpha + \frac{C_1}{\lambda_0^{q_0}} \dashint_{\mathcal{B}_{5n}} G_\alpha^{q_0} d\mu_\alpha < \kappa \varepsilon \mu_\alpha(\mathcal{Q}_1),
	\end{align*}
where all constants depend only on $n,s,\alpha$ and $\theta$ and the last inequality is obtained by choosing $M_0$ large enough in (\ref{tailcontrol}) and taking account our definition (\ref{tailcontrol}) of $\lambda_0$.

\section{A priori estimates} \label{ape}
In order to establish a priori estimates for weak solutions to the equation $L_{A} u = (-\Delta)^s g$, we need the following standard alternative characterization of the $L^p$ norm which follows from Fubini's theorem in a straightforward way.
\begin{lem} \label{altchar}
	Let $\nu$ be a $\sigma$-finite measure on $\mathbb{R}^n$ and let $h:\Omega \rightarrow [0,+\infty]$ be a $\nu$-measurable function in a domain $\Omega \subset \mathbb{R}^n$. Then for any $0< \beta < \infty$, we have 
	$$ \int_{\Omega} h^\beta d\nu = \beta \int_0^{\infty} \lambda^{\beta-1} \nu \left ( \{x \in \Omega \mid h(x)>\lambda \} \right ) d\lambda.$$
\end{lem}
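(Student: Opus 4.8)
The plan is to reduce the identity to Tonelli's theorem applied to the product measure $\nu \otimes \mathcal{L}^1$ on $\Omega \times (0,\infty)$, where $\mathcal{L}^1$ denotes one-dimensional Lebesgue measure. First I would record the elementary pointwise identity
\[ h(x)^\beta = \beta \int_0^{h(x)} \lambda^{\beta-1}\, d\lambda = \beta \int_0^\infty \lambda^{\beta-1} \chi_{\{h(x) > \lambda\}}\, d\lambda, \]
valid for every $x \in \Omega$ with the usual conventions in the degenerate cases $h(x) = 0$ and $h(x) = +\infty$ (in the latter both sides equal $+\infty$).

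Next I would integrate this identity over $\Omega$ against $\nu$ and interchange the order of integration. The integrand $(x,\lambda) \mapsto \lambda^{\beta-1} \chi_{\{h(x) > \lambda\}}$ is nonnegative, and it is $(\nu \otimes \mathcal{L}^1)$-measurable on $\Omega \times (0,\infty)$ because $\{(x,\lambda) : h(x) > \lambda\}$ is a measurable subset of the product: it is the superlevel set of the measurable function $(x,\lambda) \mapsto h(x) - \lambda$, the map $x \mapsto h(x)$ being $\nu$-measurable by hypothesis. Since $\nu$ is $\sigma$-finite and $\mathcal{L}^1$ restricted to $(0,\infty)$ is $\sigma$-finite, Tonelli's theorem applies and yields
\[ \int_\Omega h^\beta\, d\nu = \beta \int_\Omega \int_0^\infty \lambda^{\beta-1} \chi_{\{h(x) > \lambda\}}\, d\lambda\, d\nu(x) = \beta \int_0^\infty \lambda^{\beta-1} \left( \int_\Omega \chi_{\{h(x) > \lambda\}}\, d\nu(x) \right) d\lambda. \]
Identifying the inner integral with $\nu(\{x \in \Omega \mid h(x) > \lambda\})$ gives the asserted formula.

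There is no genuine obstacle here; the only points that deserve a word of care are the joint measurability of the integrand on the product space, which is precisely what licenses the use of Tonelli, and the $\sigma$-finiteness assumption on $\nu$, which is exactly the hypothesis guaranteeing that Tonelli is applicable. Finally, the set of $\lambda$ with $\lambda = h(x)$ is $\mathcal{L}^1$-null, so replacing the strict inequality in the level set by a non-strict one would not change the value of the integral.
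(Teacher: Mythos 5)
Your proof is correct and is exactly the argument the paper has in mind: the paper offers no written proof beyond remarking that the identity "follows from Fubini's theorem in a straightforward way," and your Tonelli/layer-cake computation is that straightforward way, with the measurability of the superlevel set in the product space and the role of $\sigma$-finiteness correctly identified as the only points needing care.
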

\begin{prop} \label{aprioriest}
	Let $q \in [2,p)$ and $\widetilde q \in (q_0,q_\alpha^\star)$, where $q_0$ is given by (\ref{q0}). Then there exists some small enough $\delta = \delta(n,s,\alpha,\theta,\Lambda,m,q,\widetilde q) > 0$ such that if $A \in \mathcal{L}_0(\Lambda)$ is $\delta$-vanishing in $\mathcal{B}_{5n}$ and $g \in W^{s,2}(\mathbb{R}^n)$ satisfies $G_\alpha \in L^{\widetilde q}(\mathcal{B}_{5n},\mu_\alpha)$, then for any weak solution $u \in W^{s,2}(\mathbb{R}^n)$ of the equation $L_{A} u = (-\Delta)^s g$ in $B_{5n}$ that satisfies $U_\alpha \in L^{\widetilde q}(\mathcal{B}_{5n},\mu_\alpha)$, the estimate (\ref{hdest}) in any ball contained in $B_{5n}$ with respect to $\alpha$ and (\ref{J}) in any ball contained in $B_{5n}$ with respect to $q$, we have
\begin{align*}
\left (\dashint_{\mathcal{B}_{1/2}} U_\alpha^{\widetilde q} d\mu_\alpha \right )^{\frac{1}{\widetilde q}} \leq & C \Bigg (\sum_{k=1}^\infty 2^{-k(s-\theta)} \left ( \dashint_{\mathcal{B}_{2^k 5n}} U^2 d\mu \right )^\frac{1}{2} \\
	& + \left ( \dashint_{\mathcal{B}_{5n}} G_\alpha^{\widetilde q} d\mu_\alpha \right )^\frac{1}{\widetilde q} + \sum_{k=1}^\infty 2^{-k(s-\theta)} \left ( \dashint_{\mathcal{B}_{2^k 5n}} G^2 d\mu \right )^\frac{1}{2} \Bigg ),
	\end{align*}
	where $C=C(n,s,\alpha,\theta,\Lambda,m,q,\widetilde q,p)>0$.
\end{prop}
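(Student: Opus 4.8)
The plan is to derive the estimate from the level set inequality of Proposition \ref{levelsetest1} by the standard Calder\'on--Zygmund integration scheme: feed that inequality into the layer-cake formula of Lemma \ref{altchar} and absorb the resulting self-referential term. Throughout set $\beta:=\widetilde q/2$ and $h:=\mathcal{M}_{\mathcal{B}_{5n}}(U_\alpha^2)$, noting $\beta>1$ since $\widetilde q>q_0\ge2$. As $\mathcal{B}_{1/2}\subset\mathcal{Q}_1\subset\mathcal{B}_{5n}$, Proposition \ref{maxdominate} gives $U_\alpha^2\le h$ a.e.\ on $\mathcal{Q}_1$, hence $\dashint_{\mathcal{B}_{1/2}}U_\alpha^{\widetilde q}\,d\mu_\alpha\le C(n,\theta_\alpha)\int_{\mathcal{Q}_1}h^{\beta}\,d\mu_\alpha$, and it suffices to bound $\int_{\mathcal{Q}_1}h^{\beta}\,d\mu_\alpha$. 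The crucial preliminary observation is that this quantity is \emph{finite}: combining the a priori hypothesis $U_\alpha\in L^{\widetilde q}(\mathcal{B}_{5n},\mu_\alpha)$ with the strong $p$-$p$ maximal inequality of Proposition \ref{Maxfun} (valid because $\beta>1$) yields $\int_{\mathcal{Q}_1}h^{\beta}\,d\mu_\alpha\le C\,||U_\alpha||_{L^{\widetilde q}(\mathcal{B}_{5n},\mu_\alpha)}^{\widetilde q}<\infty$. This finiteness is exactly what will make the absorption step legitimate.

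Now fix $\varepsilon\in(0,\varepsilon_0]$ (to be chosen below) and let $\delta$ and $M_0$ be as furnished by Proposition \ref{levelsetest1}; put $\Lambda_*:=N_{\varepsilon,q}^{2}\lambda_0^{2}$. By Lemma \ref{altchar} (with $\nu=\mu_\alpha$),
$$\int_{\mathcal{Q}_1}h^{\beta}\,d\mu_\alpha=\beta\int_0^\infty t^{\beta-1}\mu_\alpha\big(\mathcal{Q}_1\cap\{h>t\}\big)\,dt\le\Lambda_*^{\beta}\,\mu_\alpha(\mathcal{Q}_1)+\beta\int_{\Lambda_*}^\infty t^{\beta-1}\mu_\alpha\big(\mathcal{Q}_1\cap\{h>t\}\big)\,dt.$$
The first term is at most $C\lambda_0^{\widetilde q}$. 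In the second I substitute $t=N_{\varepsilon,q}^{2}\lambda^{2}$, so that $\lambda$ ranges over $[\lambda_0,\infty)$, apply Proposition \ref{levelsetest1}, and interchange the order of integration by Tonelli. After evaluating the inner integral $\int_{\lambda_0}^{\sqrt{h(x,y)}}\lambda^{\widetilde q-3}\,d\lambda\le C\,h(x,y)^{\beta-1}$ (here $\widetilde q>2$), the $U_\alpha$-contribution takes the form $C\varepsilon\int_{\mathcal{Q}_1}h^{\beta}\,d\mu_\alpha$; after evaluating $\int_{\lambda_0}^{(\mathcal{M}_{\mathcal{B}_{5n}}(G_\alpha^{q_0})/\delta^{q_0})^{1/q_0}}\lambda^{\widetilde q-1-q_0}\,d\lambda$ (finite precisely because $\widetilde q>q_0$), the $G_\alpha$-contribution is bounded by $C\,\delta^{-\widetilde q}\int_{\mathcal{Q}_1}\big(\mathcal{M}_{\mathcal{B}_{5n}}(G_\alpha^{q_0})\big)^{\widetilde q/q_0}\,d\mu_\alpha$. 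All constants $C$ here depend on $n,s,\alpha,\theta,\Lambda,m,q,\widetilde q,p$ and, through $N_{\varepsilon,q}$, on $\varepsilon$; but $\varepsilon$ is about to be fixed in terms of the remaining parameters only. Choosing $\varepsilon$ small enough (and $\le\varepsilon_0$) that $C\varepsilon\le\tfrac12$, the $U_\alpha$-term is absorbed into the left-hand side — legitimate because $\int_{\mathcal{Q}_1}h^{\beta}\,d\mu_\alpha<\infty$ — and the strong $\tfrac{\widetilde q}{q_0}$-$\tfrac{\widetilde q}{q_0}$ maximal inequality ($\widetilde q/q_0>1$) bounds the $G_\alpha$-term by $C\,\delta^{-\widetilde q}\int_{\mathcal{B}_{5n}}G_\alpha^{\widetilde q}\,d\mu_\alpha$ with $\delta$ now a fixed admissible number. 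Rearranging and dividing by the fixed constant $\mu_\alpha(\mathcal{B}_{1/2})$ gives $\big(\dashint_{\mathcal{B}_{1/2}}U_\alpha^{\widetilde q}\,d\mu_\alpha\big)^{1/\widetilde q}\le C\big(\lambda_0+(\dashint_{\mathcal{B}_{5n}}G_\alpha^{\widetilde q}\,d\mu_\alpha)^{1/\widetilde q}\big)$.

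It then remains to replace $\lambda_0$, as defined in (\ref{tailcontrol}), by the right-hand side of the claimed estimate. The two tail sums in $\lambda_0$ built from $U$ and $G$ (the latter carrying the now fixed factor $\delta^{-1}$) are already of the asserted form, and by H\"older's inequality, since $q_0<\widetilde q$, one has $\big(\dashint_{\mathcal{B}_{5n}}G_\alpha^{q_0}\,d\mu_\alpha\big)^{1/q_0}\le\big(\dashint_{\mathcal{B}_{5n}}G_\alpha^{\widetilde q}\,d\mu_\alpha\big)^{1/\widetilde q}$. The only term still out of place is $\big(\dashint_{\mathcal{B}_{5n}}U_\alpha^{2}\,d\mu_\alpha\big)^{1/2}$, which I estimate via H\"older ($2\le q$) by $\big(\dashint_{\mathcal{B}_{5n}}U_\alpha^{q}\,d\mu_\alpha\big)^{1/q}$ and then control by the structural hypothesis (\ref{J}) — applied on a slightly larger concentric ball, which costs only harmless constants and a shift $k\mapsto k+1$ in the tail sums — together with a further use of H\"older to pass from the resulting $\big(\dashint_{\mathcal{B}_{5n}}G_\alpha^{q_0}\,d\mu_\alpha\big)^{1/q_0}$ to $\big(\dashint_{\mathcal{B}_{5n}}G_\alpha^{\widetilde q}\,d\mu_\alpha\big)^{1/\widetilde q}$. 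After this substitution the right-hand side is exactly the sum of the three quantities in the statement. (Equivalently: (\ref{J}) forces $\lambda_0$ to be dominated by this sum, so Proposition \ref{levelsetest1} applies a fortiori from that smaller threshold onward, and the integration above produces the asserted bound directly.)

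The analytic core — layer cake, Tonelli, and the weak and strong maximal inequalities — is entirely routine. The genuine care lies in three places: (i) the order of parameter choices ($\varepsilon$ small enough both for absorption and $\le\varepsilon_0$, then $\delta$ from Lemma \ref{mfuse}/Proposition \ref{levelsetest1}, then $M_0$ large in (\ref{tailcontrol})); (ii) recognising that the absorption is permitted solely because the a priori hypothesis $U_\alpha\in L^{\widetilde q}(\mathcal{B}_{5n},\mu_\alpha)$ forces $\int_{\mathcal{Q}_1}h^{\beta}\,d\mu_\alpha<\infty$, so that the argument is genuinely an a priori estimate and does not by itself produce the integrability it assumes; and (iii) eliminating the $U_\alpha$-dependence of $\lambda_0$ via (\ref{J}) and H\"older so that nothing involving $U_\alpha$ survives on the right. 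The restriction $q_0<\widetilde q$ is used in the layer-cake exponents, while $\widetilde q<q_\alpha^\star$ is the regime in which the hypotheses (\ref{hdest}), (\ref{J}) and the $L^{\widetilde q}$-membership of $U_\alpha,G_\alpha$ are meant to be available in the application and is not otherwise needed in the present argument.
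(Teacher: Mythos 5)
Your overall strategy is exactly the paper's: layer-cake via Lemma \ref{altchar}, the level set estimate of Proposition \ref{levelsetest1} above the threshold $\lambda_0$, Tonelli, absorption of the $U_\alpha$-term (legitimised by the a priori $L^{\widetilde q}$ membership and the strong maximal inequality), and then elimination of $\lambda_0$. The only structural variation is at the very end: to remove the term $(\dashint_{\mathcal{B}_{5n}}U_\alpha^2\,d\mu_\alpha)^{1/2}$ from $\lambda_0$ you invoke H\"older and the hypothesis (\ref{J}), whereas the paper uses the hypothesis (\ref{hdest}) with $u_0=u$; since both are assumed, either route is acceptable and neither buys anything over the other.

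There is, however, one point where your write-up is circular as stated and where your closing remark is actually wrong. After Tonelli, the $U_\alpha$-contribution carries the prefactor $N_{\varepsilon,q}^{\widetilde q}\,\varepsilon$ (the $N_{\varepsilon,q}^{\widetilde q}$ coming from the substitution $t=N_{\varepsilon,q}^2\lambda^2$, the $\varepsilon$ from Proposition \ref{levelsetest1}). You acknowledge that your constant $C$ depends on $\varepsilon$ through $N_{\varepsilon,q}$ and then "choose $\varepsilon$ small enough that $C\varepsilon\le\tfrac12$" — but this is only possible because, by (\ref{Neps}), $N_{\varepsilon,q}^{\widetilde q}\varepsilon = C'\varepsilon^{1-\widetilde q/q_\alpha^\star}$ with a net positive power of $\varepsilon$, which is precisely where the hypothesis $\widetilde q<q_\alpha^\star$ enters. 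Your final sentence asserting that $\widetilde q<q_\alpha^\star$ "is not otherwise needed in the present argument" is therefore incorrect: without it the absorption step cannot close, since the prefactor would not tend to $0$ as $\varepsilon\to0$. The paper makes this explicit by setting $\varepsilon:=\min\{\varepsilon_0,(2C_1\widetilde q\,C_{nd}C_{s,\theta}C_\alpha N_d10^{10n})^{-q_\alpha^\star/(q_\alpha^\star-\widetilde q)}\}$. With that one correction your argument is complete.
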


\begin{proof}
Let $\varepsilon$ to be chosen small enough and consider the corresponding $\delta = \delta(\varepsilon,n,s,\alpha,\theta,\Lambda,m) > 0$ given by Lemma \ref{mfuse}. Then by using Lemma \ref{altchar} multiple times, first with $\beta=\widetilde q$, $h={\mathcal{M}}_{\mathcal{B}_{5n}} (U_\alpha^2 )^\frac{1}{2}$ and $d\nu=d\mu_\alpha$, then with $\beta=\widetilde q-2$, $h={\mathcal{M}}_{\mathcal{B}_{5n}} (U_\alpha^2 )^\frac{1}{2}$ and $d\nu={\mathcal{M}}_{\mathcal{B}_{5n}} (U_\alpha^2 )d\mu_\alpha$, and also with $\beta=\widetilde q-q_0$,  $h={\mathcal{M}}_{\mathcal{B}_{5n}} (G_\alpha^{q_0} )^\frac{1}{q_0}$ and $d\nu={\mathcal{M}}_{\mathcal{B}_{5n}} (G_\alpha^{q_0} )d\mu_\alpha$, a change of variables, Proposition \ref{levelsetest1} and the definition of $N_{\varepsilon,q}$ from (\ref{Neps}), we obtain
\begin{align*}
& \int_{\mathcal{Q}_1} \left ({\mathcal{M}}_{\mathcal{B}_{5n}} (U_\alpha^2 ) \right )^\frac{\widetilde q}{2} d\mu_\alpha \\ = & \widetilde q \int_0^{\infty} \lambda^{\widetilde q-1} \mu_\alpha \left ( \mathcal{Q}_1 \cap \left \{{\mathcal{M}}_{\mathcal{B}_{5n}} (U_\alpha^2 ) > \lambda^2 \right \} \right ) d\lambda \\
= & \widetilde q N_{\varepsilon,q}^{\widetilde q} \int_0^{\infty} \lambda^{\widetilde q-1} \mu_\alpha \left ( \mathcal{Q}_1 \cap \left \{{\mathcal{M}}_{\mathcal{B}_{5n}} (U_\alpha^2 ) > N_{\varepsilon,q}^2 \lambda^2 \right \} \right ) d\lambda \\
= & \widetilde q N_{\varepsilon,q}^{\widetilde q} \int_0^{\lambda_0} \lambda^{\widetilde q-1} \mu_\alpha \left ( \mathcal{Q}_1 \cap \left \{{\mathcal{M}}_{\mathcal{B}_{5n}} (U_\alpha^2 ) > N_{\varepsilon,q}^2 \lambda^2 \right \} \right ) d\lambda \\ & + \widetilde q N_{\varepsilon,q}^{\widetilde q} \int_{\lambda_0}^{\infty} \lambda^{\widetilde q-1} \mu_\alpha \left ( \mathcal{Q}_1 \cap \left \{{\mathcal{M}}_{\mathcal{B}_{5n}} (U_\alpha^2 ) > N_{\varepsilon,q}^2 \lambda^2 \right \} \right ) d\lambda \\
\leq & \widetilde q N_{\varepsilon,q}^{\widetilde q} \mu_\alpha(\mathcal{Q}_1) \lambda_0^{\widetilde q} \\
&+ C_1 \widetilde q N_{\varepsilon,q}^{\widetilde q} \varepsilon \int_{0}^{\infty} \lambda^{\widetilde q-3} \int_{\mathcal{Q}_1 \cap \left \{ {\mathcal{M}}_{\mathcal{B}_{5n}} (U_\alpha^2) > \lambda^2 \right \}} {\mathcal{M}}_{\mathcal{B}_{5n}} (U_\alpha^2 ) d\mu_\alpha d\lambda \\
&+ C_1 \widetilde q N_{\varepsilon,q}^{\widetilde q}\delta^{-q_0} \int_{0}^{\infty} \lambda^{\widetilde q-q_0-1} \int_{\mathcal{Q}_1 \cap \left \{ {\mathcal{M}}_{\mathcal{B}_{5n}} (G_\alpha^{q_0}) > \delta^{q_0} \lambda^{q_0} \right \}} {\mathcal{M}}_{\mathcal{B}_{5n}} (G_\alpha^{q_0} ) d\mu_\alpha d\lambda \\
= & \widetilde q N_{\varepsilon,q}^{\widetilde q} \mu_\alpha(\mathcal{Q}_1) \lambda_0^{\widetilde q} \\
&+ C_1 \widetilde q C_{nd} C_{s,\theta} N_d 10^{10n} \varepsilon^{1-\widetilde q/q_\alpha^\star} \int_{\mathcal{Q}_1} \left ({\mathcal{M}}_{\mathcal{B}_{5n}} (U_\alpha^2 ) \right )^\frac{\widetilde q}{2} d\mu_\alpha \\
&+ C_1 \widetilde q N_{\varepsilon,q}^{\widetilde q} \delta^{-q_0} \int_{\mathcal{Q}_1} \left ({\mathcal{M}}_{\mathcal{B}_{5n}} (G_\alpha^{q_0} ) \right )^\frac{\widetilde q}{q_0} d\mu_\alpha,
\end{align*}
where $C_1=C_1(n,s,\alpha,\theta) \geq 1$. Next, we set
$$ \varepsilon := \min \left \{ \varepsilon_0, \left (2 C_1 \widetilde q C_{nd} C_{s,\theta} C_\alpha N_d 10^{10n} \right )^{-\frac{q_\alpha^\star}{q_\alpha^\star-\widetilde q}} \right \}, $$
so that $\varepsilon$ is a valid choice in Proposition \ref{levelsetest1} and moreover, we have
$$ C_1 \widetilde q C_{nd} C_{s,\theta} C_\alpha N_d 10^{10n} \varepsilon^{1-\widetilde q/q_\alpha^\star} \leq \frac{1}{2}.$$ Since in addition by assumption we have $U_\alpha \in L^{\widetilde q}(\mathcal{B}_{5n},\mu_\alpha)$, by Proposition \ref{Maxfun} we have $$\int_{\mathcal{Q}_1} \left ({\mathcal{M}}_{\mathcal{B}_{5n}} (U_\alpha^2 ) \right )^\frac{\widetilde q}{2} d\mu_\alpha < \infty,$$ so that we can reabsorb the second to last term on the right-hand side of the first display of the proof in the the left-hand side, which yields
\begin{align*}
\int_{\mathcal{Q}_1} \left ({\mathcal{M}}_{\mathcal{B}_{5n}} (U_\alpha^2 ) \right )^\frac{\widetilde q}{2} d\mu_\alpha
\leq & 2 \widetilde q N_{\varepsilon,q}^{\widetilde q} \mu_\alpha(\mathcal{Q}_1) \lambda_0^{\widetilde q}+ 2 C_1 \widetilde q N_{\varepsilon,q}^{\widetilde q} \delta^{-q_0} \int_{\mathcal{Q}_1} \left ({\mathcal{M}}_{\mathcal{B}_{5n}} (G_\alpha^{q_0} ) \right )^\frac{\widetilde q}{q_0} d\mu_\alpha.
\end{align*}
Now in view of Proposition \ref{maxdominate} and Proposition \ref{Maxfun}, taking into account the definition of $\lambda_0$ from (\ref{tailcontrol}) along with using the estimate (\ref{hdest}) with $u_0=u$ and H\"older's inequality, we obtain
\begin{align*}
& \dashint_{\mathcal{B}_{1/2}} U_\alpha^{\widetilde q} d\mu_\alpha
\leq \frac{1}{\mu_\alpha\left (\mathcal{B}_{1/2} \right )} \int_{\mathcal{Q}_1} \left ({\mathcal{M}}_{\mathcal{B}_{5n}} (U_\alpha^2 ) \right )^\frac{\widetilde q}{2} d\mu_\alpha \\ 
\leq & C_2 \left ( \lambda_0^{\widetilde q}+ \int_{\mathcal{Q}_1} \left ({\mathcal{M}}_{\mathcal{B}_{5n}} (G_\alpha^{q_0} ) \right )^\frac{\widetilde q}{q_0} d\mu_\alpha \right ) \\
\leq & C_3 \Bigg (\sum_{k=1}^\infty 2^{-k(s-\theta)} \left (\dashint_{\mathcal{B}_{2^k 5n}} U^2 d\mu \right )^\frac{1}{2} + \sum_{k=1}^\infty 2^{-k(s-\theta)} \left (\dashint_{\mathcal{B}_{2^k 5n}} G^2 d\mu \right )^\frac{1}{2} + \left (\dashint_{\mathcal{B}_{5n}} U_\alpha^2 d\mu_\alpha \right )^\frac{1}{2} \\ & +\left (\dashint_{\mathcal{B}_{5n}} G_\alpha^{q_0} d\mu_\alpha \right )^\frac{1}{q_0} \Bigg )^{\widetilde q}
+ C_3 \int_{\mathcal{B}_{5n}} G_\alpha^{\widetilde q} d\mu_\alpha \\
\leq & C_4 \Bigg (\sum_{k=1}^\infty 2^{-k(s-\theta)} \left (\dashint_{\mathcal{B}_{2^k 5n}} U^2 d\mu \right )^\frac{1}{2} + \sum_{k=1}^\infty 2^{-k(s-\theta)} \left (\dashint_{\mathcal{B}_{2^k 5n}} G^2 d\mu \right )^\frac{1}{2} + \left (\dashint_{\mathcal{B}_{5n}} G_\alpha^{m} d\mu_\alpha \right )^\frac{1}{m} \\ & +\left (\dashint_{\mathcal{B}_{5n}} G_\alpha^{q_0} d\mu_\alpha \right )^\frac{1}{q_0} \Bigg )^{\widetilde q}
+ C_4 \int_{\mathcal{B}_{5n}} G_\alpha^{\widetilde q} d\mu_\alpha \\
\leq & C_5 \Bigg (\sum_{k=1}^\infty 2^{-k(s-\theta)} \left (\dashint_{\mathcal{B}_{2^k 5n}} U^2 d\mu \right )^\frac{1}{2} + \sum_{k=1}^\infty 2^{-k(s-\theta)} \left (\dashint_{\mathcal{B}_{2^k 5n}} G^2 d\mu \right )^\frac{1}{2} \Bigg )^{\widetilde q} + C_5 \dashint_{\mathcal{B}_{5n}} G_\alpha^{\widetilde q} d\mu_\alpha,
\end{align*}
where we also used that $m \leq q_0 \leq \widetilde q$ and all constants depend only on $n,s,\alpha,\theta,\Lambda,m,q,\widetilde q$ and $p$. This proves the desired estimate with $C=C_5^{1/\widetilde q}$.
\end{proof}

\begin{cor} \label{aprioriestcor}
	Consider some $q \in [2,p)$ and some $\widetilde q \in (q_0,q_\alpha^\star)$. Then there exists some small enough $\delta = \delta(n,s,\alpha,\theta,\Lambda,m,q,\widetilde q) > 0$ such that if $A \in \mathcal{L}_0(\Lambda)$ is $\delta$-vanishing in $B_1$ and $g \in W^{s,2}(\mathbb{R}^n)$ satisfies $G_\alpha \in L^{\widetilde q}(\mathcal{B}_1,\mu_\alpha)$, then for any weak solution $u \in W^{s,2}(\mathbb{R}^n)$ of the equation $L_{A} u = (-\Delta)^s g$ in $B_1$ that satisfies $U_\alpha \in L^{\widetilde q}(\mathcal{B}_1,\mu_\alpha)$, the estimate (\ref{hdest}) in any ball contained in $B_1$ with respect to $\alpha$ and (\ref{J}) in any ball contained in $B_1$ with respect to $q$, we have the estimate
	\begin{equation} \label{1scale}
	\begin{aligned}
	\left (\dashint_{\mathcal{B}_{1/2}} U_\alpha^{\widetilde q} d\mu_\alpha \right )^{\frac{1}{\widetilde q}} \leq & C \Bigg (\sum_{k=1}^\infty 2^{-k(s-\theta)} \left ( \dashint_{\mathcal{B}_{2^k}} U^2 d\mu \right )^\frac{1}{2} \\
	& + \left ( \dashint_{\mathcal{B}_{1}} G_\alpha^{\widetilde q} d\mu_\alpha \right )^\frac{1}{\widetilde q} + \sum_{k=1}^\infty 2^{-k(s-\theta)} \left ( \dashint_{\mathcal{B}_{2^k}} G^2 d\mu \right )^\frac{1}{2} \Bigg ),
	\end{aligned}
	\end{equation}
	where $C=C(n,s,\alpha,\theta,\Lambda,m,q,\widetilde q,p)>0$.
\end{cor}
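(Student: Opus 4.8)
The plan is to deduce Corollary~\ref{aprioriestcor} from Proposition~\ref{aprioriest} by combining an affine change of variables with a covering argument that enlarges the ball carrying the conclusion. First I would record the relevant covariance. Fix $z\in\overline{B_{1/2}}$ and $r>0$ with $B_{5nr}(z)\subset B_1$. Writing $\widetilde u(\xi):=u(z+r\xi)$, $\widetilde g(\xi):=g(z+r\xi)$ and $\widetilde A(\xi,\eta):=A(z+r\xi,z+r\eta)$, a routine substitution in the weak formulation shows that $\widetilde u\in W^{s,2}(\mathbb R^n)$ is a weak solution of $L_{\widetilde A}\widetilde u=(-\Delta)^s\widetilde g$ in $B_{5n}$, that $\widetilde A\in\mathcal L_0(\Lambda)$, and that $\widetilde A$ is $\delta$-vanishing in $\mathcal B_{5n}$ (the $\delta$-vanishing condition passes to sub-balls and is invariant under such affine maps, and $B_{5nr}(z)\subset B_1$). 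In the same way the assumptions (\ref{hdest}) and (\ref{J}) and the memberships $U_\alpha,G_\alpha\in L^{\widetilde q}$ transfer to $\widetilde u,\widetilde g$ on $\mathcal B_{5n}$, so that Proposition~\ref{aprioriest} applies with the same $\delta=\delta(n,s,\alpha,\theta,\Lambda,m,q,\widetilde q)$. The key arithmetic point is the identity $\alpha+\theta_\alpha=s+\theta$: it makes the dilation factors $r^{\pm(s+\theta)}$ cancel when one undoes the substitution. Using this, together with the diagonal-translation invariance of $d\mu_\alpha$ and the $(n+2\theta_\alpha)$-homogeneity of $\mu_\alpha$, this step yields
\[
\Big(\dashint_{\mathcal B_{r/2}(z)}U_\alpha^{\widetilde q}\,d\mu_\alpha\Big)^{1/\widetilde q}\le C\,\mathcal R(z,r),
\]
where $\mathcal R(z,r)$ equals $\big(\dashint_{\mathcal B_{5nr}(z)}G_\alpha^{\widetilde q}\,d\mu_\alpha\big)^{1/\widetilde q}$ plus the two tail sums $\sum_{k\ge1}2^{-k(s-\theta)}\big(\dashint_{\mathcal B_{2^k5nr}(z)}U^2\,d\mu\big)^{1/2}$ and $\sum_{k\ge1}2^{-k(s-\theta)}\big(\dashint_{\mathcal B_{2^k5nr}(z)}G^2\,d\mu\big)^{1/2}$, i.e.\ the right-hand side of Proposition~\ref{aprioriest} rescaled and recentred.

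Next, to pass from these small balls to $\mathcal B_{1/2}$, I would proceed as follows. For $z\in\overline{B_{1/2}}$ and any fixed $r$ comparable to $\tfrac1n$ one has $\mathcal B_\rho(z)\subset\mathcal B_{\rho+1/2}$ and, for $\rho\ge\tfrac12$, $\mathcal B_\rho(z)\subset\mathcal B_{2\rho}$; combining these with the doubling property of $\mu$ and $\mu_\alpha$ and reindexing the geometric series gives $\mathcal R(z,r)\le C(n,\theta,\theta_\alpha)\,\mathcal R$, where $\mathcal R$ is the right-hand side of (\ref{1scale}). Then split $\mathcal B_{1/2}=A_1\cup A_2$ with $A_1:=\{(x,y)\in\mathcal B_{1/2}:|x-y|<\sigma\}$ and $A_2$ its complement, for $\sigma:=\tfrac1{50n}$. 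Choosing $z_1,\dots,z_N\in\overline{B_{1/2}}$ with $\{B_\sigma(z_i)\}$ covering $\overline{B_{1/2}}$ gives $A_1\subset\bigcup_i\mathcal B_{2\sigma}(z_i)$ (if $x\in B_\sigma(z_i)$ and $|x-y|<\sigma$ then $y\in B_{2\sigma}(z_i)$); applying the previous paragraph with $z=z_i$, $r=4\sigma$ (so $B_{5nr}(z_i)\subset B_1$ and $\mathcal B_{r/2}(z_i)=\mathcal B_{2\sigma}(z_i)$) gives $\dashint_{\mathcal B_{2\sigma}(z_i)}U_\alpha^{\widetilde q}\,d\mu_\alpha\le C\mathcal R^{\widetilde q}$, and since $N$ and each $\mu_\alpha(\mathcal B_{2\sigma}(z_i))$ depend only on $n$ and $\theta_\alpha$, summing over $i$ yields $\int_{A_1}U_\alpha^{\widetilde q}\,d\mu_\alpha\le C\mathcal R^{\widetilde q}$. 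On $A_2$, where $\sigma\le|x-y|<1$, I would argue directly: since $n-2\theta_\alpha>0$ one has $U_\alpha^{\widetilde q}\,d\mu_\alpha\le C(\sigma,n,\alpha,\theta_\alpha,\widetilde q)\,dx\,dy$ on $A_2$, whence
\[
\int_{A_2}U_\alpha^{\widetilde q}\,d\mu_\alpha\le C\int_{B_{1/2}}\!\int_{B_{1/2}}|u(x)-u(y)|^{\widetilde q}\,dx\,dy\le C\int_{B_{1/2}}\big|u-\overline u_{B_{1/2}}\big|^{\widetilde q}\,dx ,
\]
and then the fractional Sobolev--Poincar\'e inequality (Lemma~\ref{SobPoincare}, applicable since $\widetilde q<q_\alpha^\star$), the pointwise bound $|x-y|^{(q-2)\theta_\alpha}\le1$ on $\mathcal B_{1/2}$, and finally (\ref{J}) at scale $1$ centred at the origin --- whose right-hand side involves $G_\alpha^{q_0}$, raised to $L^{\widetilde q}$ by H\"older since $q_0<\widetilde q$ --- bound the last integral by $C\mathcal R^{\widetilde q}$; here one uses that $U_\alpha\in L^{\widetilde q}(\mathcal B_1,\mu_\alpha)$ together with $u\in W^{s,2}(\mathbb R^n)$ forces $u\in W^{\alpha,q}(B_{1/2})$, via Lemma~\ref{Sobolevrelate}, Proposition~\ref{Sobcont} and Proposition~\ref{Sobemb}. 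Adding the contributions of $A_1$ and $A_2$ and dividing by $\mu_\alpha(\mathcal B_{1/2})$ then yields (\ref{1scale}).

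The argument is a routine localization, so there is no single deep obstacle. The two points needing care are, first, checking that \emph{every} hypothesis --- especially (\ref{hdest}), (\ref{J}) and the $\delta$-vanishing condition --- is covariant under the affine change of variables, with the cancellation of dilation factors hinging on $\alpha+\theta_\alpha=s+\theta$; and, second, the fact that shrunken copies of Proposition~\ref{aprioriest} live on \emph{diagonal} balls and hence cover only a neighbourhood of the diagonal, so that the off-diagonal piece $A_2$ must be handled by the separate elementary estimate above. The most tedious ingredient is the bookkeeping that replaces the off-centre tail sums by the origin-centred ones.
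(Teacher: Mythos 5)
Your argument is correct and follows essentially the same route as the paper: rescale so that Proposition \ref{aprioriest} applies on balls $B_{5nr}(z)\subset B_1$ (the point being that $\alpha+\theta_\alpha=s+\theta$ makes every term scale by the same factor), then cover $\overline{B_{1/2}}$ by finitely many such balls and sum. The only difference is that you handle the off-diagonal region $|x-y|\ge\sigma$ explicitly via the elementary bound $U_\alpha^{\widetilde q}\,d\mu_\alpha\le C\,|u(x)-u(y)|^{\widetilde q}\,dx\,dy$ together with the fractional Sobolev--Poincar\'e inequality and (\ref{J}), whereas the paper delegates this last summation step to \cite[Corollary 8.3]{MeV}; your version is self-contained and equally valid.
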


\begin{proof}
	There exists some small enough $r_1 \in \left (0,1 \right)$ such that for any $z \in B_{1/2}$, we have
	\begin{equation} \label{rz}
	B_{5nr_1}(z) \Subset B_1.
	\end{equation}
	Fix some $z \in B_{1/2}$ and consider the scaled functions $u_z,g_z \in W^{s,2}(\mathbb{R}^n)$ given by
	$$ u_z(x):=u(r_1 x+z), \quad g_z(x):=g(r_1 x+z) $$
	and also
	$$A_z(x,y):= A(r_1 x+z,r_1 y+z).$$
	Since $A$ is $\delta$-vanishing in $\mathcal{B}_1$, we see that $A_z$ clearly is $\delta$-vanishing in $B_{\frac{1}{5nr}}(-z) \supset B_{5n}$.
	Furthermore, in view of (\ref{rz}), $u_z$ is a weak solution of $L_{A_z} u_z = g_z$ in $B_{\frac{1}{5nr_1}}(-z) \supset B_{5n}$. Now fix some $r>0$ and some $x_0 \in \mathbb{R}^n$ such that $B_r(x_0) \subset B_{5n}$. Then again in view of (\ref{rz}), we clearly have $$B_{r_1 r}(r_1 x_0+z) \subset B_1,$$ so that by the assumption that the estimate (\ref{J}) holds for any ball contained in $B_1$, the estimate (\ref{J}) holds with respect to the ball $B_{r_1 r}(r_1 x_0+z)$. Together with changes of variables and taking into account (\ref{modtheta}), by straightforward computations similar to \cite[Formula (8.2)]{MeV} it is now easy to verify that the functions
	\begin{align*}
	(U_\alpha)_z(x,y):=\frac{|u_z(x)-u_z(y)|}{|x-y|^{\alpha+\theta_\alpha}}, \quad (G_\alpha)_z(x,y):=\frac{|g_z(x)-g_z(y)|}{|x-y|^{\alpha+\theta_\alpha}},
	\end{align*}
	\begin{align*}
	U_z(x,y):=\frac{|u_z(x)-u_z(y)|}{|x-y|^{s+\theta}}, \quad G_z(x,y):=\frac{|g_z(x)-g_z(y)|}{|x-y|^{s+\theta}}
	\end{align*}
	satisfy the estimate (\ref{hdest}) in any ball contained in $B_{5n}$ with respect to $\alpha$ and the estimate (\ref{J}) in any ball contained in $B_{5n}$ with respect to $q$. Since in addition the assumption that $U_\alpha \in L^{\widetilde q}(\mathcal{B}_1,\mu_\alpha)$ clearly implies that $(U_\alpha)_z \in L^{\widetilde q} \left (\mathcal{B}_{{\frac{1}{5nr_1}}(-z)},\mu_\alpha \right ) \subset L^{\widetilde q}\left (\mathcal{B}_{5n} ,\mu_\alpha \right )$, by Proposition \ref{aprioriest} we obtain that
	\begin{align*}
	\left (\dashint_{\mathcal{B}_{1/2}} (U_\alpha)_z^{\widetilde q} d\mu_\alpha \right )^{\frac{1}{\widetilde q}} \leq & C_4 \Bigg (\sum_{k=1}^\infty 2^{-k(s-\theta)} \left ( \dashint_{\mathcal{B}_{2^k 5n}} U_z^2 d\mu \right )^\frac{1}{2} \\
	& + \left ( \dashint_{\mathcal{B}_{5n}} (G_\alpha)_z^{\widetilde q} d\mu_\alpha \right )^\frac{1}{\widetilde q} + \sum_{k=1}^\infty 2^{-k(s-\theta)} \left ( \dashint_{\mathcal{B}_{2^k 5n}} G_z^2 d\mu \right )^\frac{1}{2} \Bigg ),
	\end{align*}
	where $C_4=C_4(n,s,\alpha,\theta,\Lambda,m,q,\widetilde q,p)>0$. 
	By combining the last display with another straightforward computation involving changes of variables (cf.\ \cite[Formula (8.3)]{MeV}), we obtain
	\begin{align*}
		\left (\dashint_{\mathcal{B}_{r_1/2}} U_\alpha^{\widetilde q} d\mu_\alpha \right )^{\frac{1}{\widetilde q}} \leq & C_5 \Bigg (\sum_{k=1}^\infty 2^{-k(s-\theta)} \left ( \dashint_{\mathcal{B}_{2^k 5nr_1(z)}} U^2 d\mu \right )^\frac{1}{2} \\
		& + \left ( \dashint_{\mathcal{B}_{5nr_1(z)}} G_\alpha^{\widetilde q} d\mu_\alpha \right )^\frac{1}{\widetilde q} + \sum_{k=1}^\infty 2^{-k(s-\theta)} \left ( \dashint_{\mathcal{B}_{2^k 5nr_1(z)}} G^2 d\mu \right )^\frac{1}{2} \Bigg ),
	\end{align*}
	 where again $C_5=C_5(n,s,\alpha,\theta,\Lambda,m,q,\widetilde q,p)>0$.
	 Since $\left \{B_{r_1/2}(z) \right \}_{z \in B_{1/2}}$ is an open covering of the compact set $\overline B_{1/2}$, there is a finite subcover $\left \{B_{r_1/2}(z_j) \right \}_{j=1}^N$ of $B_{1/2}$. Thus, summing up the above estimates applied with $z=z_j$ over $j=1,...,N$ in essentially the same way as in the last display in the proof of \cite[Corollary 8.3]{MeV} yields the estimate (\ref{1scale}), which finishes the proof.
\end{proof}

In view of another straightforward scaling argument (cf. \cite[Corollary 8.4]{MeV}), we also have the following scaled version of Corollary \ref{aprioriestcor}.
\begin{cor} \label{aprioriestcorscaled}
	Let $r>0$ and $z \in \mathbb{R}^n$ and consider some $q \in [2,p)$ and some $\widetilde q \in (q_0,q_\alpha^\star)$. Then there exists some small enough $\delta = \delta(n,s,\alpha,\theta,\Lambda,m,q,\widetilde q) > 0$ such that if $A \in \mathcal{L}_0(\Lambda)$ is $\delta$-vanishing in $\mathcal{B}_r(z)$ and $g \in W^{s,2}(\mathbb{R}^n)$ satisfies $G_\alpha \in L^{\widetilde q}(\mathcal{B}_r(z),\mu_\alpha)$, then for any weak solution $u \in W^{s,2}(\mathbb{R}^n)$ of the equation $L_{A} u = (-\Delta)^s g$ in $B_r(z)$ that satisfies $U_\alpha \in L^{\widetilde q}(\mathcal{B}_r(z),\mu_\alpha)$, the estimate (\ref{hdest}) in any ball contained in $B_r(z)$ with respect to $\alpha$ and (\ref{J}) in any ball contained in $B_r(z)$ with respect to $q$, we have the estimate
	\begin{align*}
	\left (\dashint_{\mathcal{B}_{r/2}(z)} U_\alpha^{\widetilde q} d\mu_\alpha \right )^{\frac{1}{\widetilde q}} \leq & C \Bigg (\sum_{k=1}^\infty 2^{-k(s-\theta)} \left ( \dashint_{\mathcal{B}_{2^k r(z)}} U^2 d\mu \right )^\frac{1}{2} \\
	& + \left ( \dashint_{\mathcal{B}_{r}(z)} G_\alpha^{\widetilde q} d\mu_\alpha \right )^\frac{1}{\widetilde q} + \sum_{k=1}^\infty 2^{-k(s-\theta)} \left ( \dashint_{\mathcal{B}_{2^k r}(z)} G^2 d\mu \right )^\frac{1}{2} \Bigg ),
	\end{align*}
	where $C=C(n,s,\alpha,\theta,\Lambda,m,q,\widetilde q,p)>0$.
\end{cor}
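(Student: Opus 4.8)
The plan is to reduce everything to Corollary~\ref{aprioriestcor} by the usual dilation. Given $r>0$ and $z\in\mathbb{R}^n$, I would introduce the rescaled data
$$u_r(x):=u(rx+z),\quad g_r(x):=g(rx+z),\quad A_r(x,y):=A(rx+z,ry+z),$$
together with the associated fractional gradients $(U_\alpha)_r,(G_\alpha)_r$ of order $\alpha+\theta_\alpha$ and $U_r,G_r$ of order $s+\theta$ built from $u_r$ and $g_r$. First I would check that each structural hypothesis of Corollary~\ref{aprioriestcor} is invariant under this rescaling. The $\delta$-vanishing of $A_r$ in $B_1$ follows from the $\delta$-vanishing of $A$ in $\mathcal{B}_r(z)$, since the mean-oscillation quantity defining $\delta$-vanishing is scale-free. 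A change of variables in the weak formulation (in which the prefactor $r^{n-2s}$ cancels on both sides) shows that $u_r$ is a weak solution of $L_{A_r}u_r=(-\Delta)^s g_r$ in $B_1$. The membership $U_\alpha\in L^{\widetilde q}(\mathcal{B}_r(z),\mu_\alpha)$ transfers to $(U_\alpha)_r\in L^{\widetilde q}(\mathcal{B}_1,\mu_\alpha)$, and similarly for $(G_\alpha)_r$, by the homogeneity $\mu_\alpha(\mathcal{B}_\rho)=c\rho^{n+2\theta_\alpha}$ recorded in Proposition~\ref{doublingmeasure}. Finally, exactly as in \cite[Formula (8.2)]{MeV}, the validity of (\ref{hdest}) with respect to $\alpha$ and of (\ref{J}) with respect to $q$ on every ball contained in $B_r(z)$ translates, via the substitution $x\mapsto rx+z$, into the same estimates for the rescaled functions on every ball contained in $B_1$.

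Next I would apply Corollary~\ref{aprioriestcor} to the triple $(u_r,g_r,A_r)$, obtaining the estimate (\ref{1scale}) for the rescaled objects. The reason the scaling is clean is the identity $\alpha+\theta_\alpha=s+\theta$ coming from (\ref{modtheta}): the gradient-type functions $U_\alpha$ and $U$ share the same homogeneity, while $\mu_\alpha(\mathcal{B}_\rho)=c\rho^{n+2\theta_\alpha}$ and $\mu(\mathcal{B}_\rho)=c\rho^{n+2\theta}$. Performing the substitution $x\mapsto rx+z,\ y\mapsto ry+z$ in every average occurring in (\ref{1scale}) one finds
$$\Big(\dashint_{\mathcal{B}_{\rho}} (U_\alpha)_r^{\widetilde q}\,d\mu_\alpha\Big)^{1/\widetilde q}=r^{\alpha+\theta_\alpha}\Big(\dashint_{\mathcal{B}_{r\rho}(z)} U_\alpha^{\widetilde q}\,d\mu_\alpha\Big)^{1/\widetilde q},$$
and analogously for the $U^2$, $G_\alpha^{\widetilde q}$ and $G^2$ terms, all carrying the common prefactor $r^{s+\theta}=r^{\alpha+\theta_\alpha}$; these prefactors cancel throughout the inequality. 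Replacing every $\mathcal{B}_{2^k}$, $\mathcal{B}_1$, $\mathcal{B}_{1/2}$ by $\mathcal{B}_{2^k r}(z)$, $\mathcal{B}_r(z)$, $\mathcal{B}_{r/2}(z)$, respectively, then produces exactly the asserted estimate, with the same constant $C$ and the same admissible $\delta$.

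I do not expect a genuine obstacle: this is bookkeeping of scaling exponents rather than a new argument, and it is precisely \cite[Corollary 8.4]{MeV} adapted to the higher-order setting. The single point I would verify carefully is that the two distinct hypotheses (\ref{hdest}) and (\ref{J}) — each quantified over all balls contained in $B_r(z)$ — genuinely survive the dilation; this is again where the relation $\alpha+\theta_\alpha=s+\theta$ is used, to ensure that the tail sums $\sum_{k}2^{-k(s-\theta)}\big(\dashint_{\mathcal{B}_{2^k\cdot}}U^2\,d\mu\big)^{1/2}$ rescale with the same power of $r$ as the left-hand sides of those estimates, so that all scaling factors match and nothing is left over.
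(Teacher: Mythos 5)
Your proposal is correct and is exactly the argument the paper intends: the paper dispatches this corollary with the one-line remark that it follows from Corollary \ref{aprioriestcor} "in view of another straightforward scaling argument (cf.\ \cite[Corollary 8.4]{MeV})", and your dilation $x\mapsto rx+z$, the verification that the hypotheses (including (\ref{hdest}) and (\ref{J}) on all balls) are scale-invariant, and the cancellation of the common prefactor $r^{\alpha+\theta_\alpha}=r^{s+\theta}$ are precisely the bookkeeping being alluded to.
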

Next, we use an iteration argument in order to drop the assumption (\ref{J}) and obtain higher integrability all the way up to the exponent $p$.
\begin{prop} \label{aprioriestx}
	Let $r>0$, $z \in \mathbb{R}^n$, $s \in (0,1)$ and $p \in (m,\infty)$, where $m$ satisfies (\ref{mdef}). Then there exists some small enough $\delta = \delta(p,n,s,\alpha,\theta,\Lambda,m) > 0$ such that if $A \in \mathcal{L}_0(\Lambda)$ is $\delta$-vanishing in $\mathcal{B}_{r}(z)$ and $g \in W^{s,2}(\mathbb{R}^n)$ satisfies $G \in L^{p}(\mathcal{B}_{r}(z),\mu)$, then for any weak solution $u \in W^{s,2}(\mathbb{R}^n)$ of the equation $L_{A} u = (-\Delta)^s g$ in $B_{r}(z)$ that satisfies $U_\alpha \in L^{p}(\mathcal{B}_{r}(z),\mu_\alpha)$ and the estimate (\ref{hdest}) in any ball contained in $B_r(z)$, we have
	\begin{equation} \label{estp}
	\begin{aligned}
	\left (\dashint_{\mathcal{B}_{r/2}(z)} U_\alpha^{p} d\mu_\alpha \right )^{\frac{1}{p}} \leq & C \Bigg (\sum_{k=1}^\infty 2^{-k(s-\theta)} \left ( \dashint_{\mathcal{B}_{2^k r}(z)} U^2 d\mu \right )^\frac{1}{2} \\
	& + \left ( \dashint_{\mathcal{B}_{r}(z)} G_\alpha^{p} d\mu_\alpha \right )^\frac{1}{p} + \sum_{k=1}^\infty 2^{-k(s-\theta)} \left ( \dashint_{\mathcal{B}_{2^k r}(z)} G^2 d\mu \right )^\frac{1}{2} \Bigg ),
	\end{aligned}
	\end{equation}
	where $C=C(n,s,\alpha,\theta,\Lambda,m,p)>0$.
\end{prop}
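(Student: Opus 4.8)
The plan is a finite bootstrap on the integrability exponent, iterating the reverse-H\"older-type estimate of Corollary \ref{aprioriestcorscaled}: each application is arranged so that its conclusion is precisely the auxiliary hypothesis (\ref{J}) needed for the next application, and after finitely many rounds the exponent reaches $p$.

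The bootstrap starts for free at exponent $2$. Since $U_\alpha^{2}\,d\mu_\alpha=|x-y|^{-n-2\alpha}(u(x)-u(y))^{2}\,dx\,dy$ and $\mu_\alpha(\mathcal{B}_\rho(x_0))\simeq\rho^{\,n+2\theta_\alpha}$, the standing hypothesis (\ref{hdest}), applied with $u_0=u$ on an arbitrary ball $B_\rho(x_0)\subset B_r(z)$, says exactly
\begin{align*}
\Big(\dashint_{\mathcal{B}_{\rho/2}(x_0)}U_\alpha^{2}\,d\mu_\alpha\Big)^{1/2}\leq{}&C\Big(\sum_{k\geq1}2^{-k(s-\theta)}\big(\dashint_{\mathcal{B}_{2^{k}\rho}(x_0)}U^{2}\,d\mu\big)^{1/2}+\big(\dashint_{\mathcal{B}_{\rho}(x_0)}G_\alpha^{m}\,d\mu_\alpha\big)^{1/m}\\
&{}+\sum_{k\geq1}2^{-k(s-\theta)}\big(\dashint_{\mathcal{B}_{2^{k}\rho}(x_0)}G^{2}\,d\mu\big)^{1/2}\Big),
\end{align*}
which is (\ref{J}) with $q=2$ (hence $q_0=\max\{m,2\}=m$, as $m\geq2$), valid in every ball contained in $B_r(z)$.

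With $q_1:=2$, suppose (\ref{J}) holds in every ball $\subset B_r(z)$ with some exponent $q_j\in[2,p)$, and write $q_j^{\star}$ for the quantity $q_\alpha^{\star}$ of (\ref{qstar}) computed with $q=q_j$; then $q_j^{\star}>q_j$ and $q_j^{\star}>m$ (indeed $q_j^{\star}\geq\frac{2n}{n-2s}>\frac{2(n-s)}{n-2s}>m$ when $n>\alpha q_j$, and $q_j^{\star}=2p>m$ otherwise). If $q_j^{\star}>p$ — which happens once $q_j>\frac{pn}{n+p\alpha}$, in particular once $n\leq\alpha q_j$ — I apply Corollary \ref{aprioriestcorscaled} on $B_r(z)$ with this $q_j$ and with $\widetilde q=p\in(\max\{m,q_j\},q_j^{\star})$; its hypotheses hold because $A$ is $\delta$-vanishing in $\mathcal{B}_r(z)$, $U_\alpha$ and $G_\alpha$ lie in $L^{p}(\mathcal{B}_r(z),\mu_\alpha)$ by the assumptions on $u$ and $g$, and (\ref{hdest}) and (\ref{J})-with-$q_j$ hold throughout; its conclusion is then exactly (\ref{estp}) and we are done. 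Otherwise $q_j^{\star}\leq p$, and I choose $q_{j+1}\in(\max\{m,q_j\},q_j^{\star})\subset(q_j,p)$, doing so with a definite decrease $\frac1{q_j}-\frac1{q_{j+1}}\geq\frac{\alpha}{2n}$ whenever that is compatible with $q_{j+1}>m$ (which it is for all but possibly the first step, where a smaller but still fixed positive decrease is available). Since $U_\alpha,G_\alpha\in L^{p}$ restrict by H\"older on the finite-measure balls to $L^{q_{j+1}}$ on every sub-ball of $B_r(z)$, Corollary \ref{aprioriestcorscaled} applied on each ball $B_\rho(x_0)\subset B_r(z)$ with $q=q_j$, $\widetilde q=q_{j+1}$ gives an estimate which — its exponent $q_{j+1}$ exceeding $m$, so that $\max\{m,q_{j+1}\}=q_{j+1}$ matches the normalization (\ref{q0}) — is literally (\ref{J}) with exponent $q_{j+1}$ in that ball, closing the induction step. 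Because $\frac1{q_j}$ drops by at least $\frac{s}{2n}>0$ at every step past the first, the first alternative is reached after $N=N(n,s,\alpha,p)$ steps; taking $\delta$ to be the least of the finitely many radii and chaining the finitely many constants produced (all with the claimed dependencies, since the intermediate exponents $q_2,\dots,q_N$ may be fixed in terms of $n,s,\alpha,p$ from the outset) yields the asserted $\delta$ and $C$.

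The one point requiring care — everything else being the scaling-and-covering bookkeeping already carried out in the proof of Corollary \ref{aprioriestcorscaled} — is the short verification that each application of that corollary genuinely returns an instance of (\ref{J}) at the next exponent (which uses only that its output exponent always exceeds $m$) together with the observation that the a priori membership $U_\alpha\in L^{p}(\mathcal{B}_r(z),\mu_\alpha)$ localizes to all sub-balls, which is what makes the intermediate applications with $\widetilde q\leq p$ legitimate before the sharp estimate (\ref{estp}) is available.
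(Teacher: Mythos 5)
Your proposal is correct and follows essentially the same route as the paper: starting from the observation that (\ref{hdest}) is precisely (\ref{J}) with $q=2$, one iterates Corollary \ref{aprioriestcorscaled} finitely many times, using that its conclusion on arbitrary sub-balls is exactly hypothesis (\ref{J}) at the next exponent (the paper uses the explicit sequence $q_{i+1}=\min\{(q_i+(q_i)^\star)/2,p\}$ where you allow any admissible choice with a fixed gain in $1/q_j$, which is an immaterial difference). The only quibble is the parenthetical about the first step: if $\frac{2n}{n-\alpha}\leq m$ the constraint $q_2>m$ forces a \emph{larger}, not smaller, decrease of $1/q_1$, but this only helps termination and does not affect the argument.
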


\begin{proof}
Define iteratively a sequence $\{q_i\}_{i=1}^\infty$ of real numbers by
$$ q_1:=2, \quad q_{i+1}:= \min \{(q_i+(q_i)^\star)/2,p\},$$
where as in (\ref{qstar}) we let
\begin{align*}
(q_i)^\star=\begin{cases} 
\frac{nq_i}{n-\alpha q_i}, & \text{if } n>\alpha q_i \\
2p, & \text{if } n \leq \alpha q_i.
\end{cases}
\end{align*}
Since for any $i$ with $n>\alpha q_{i+1}$ we have
$$ \left (q_i+\frac{nq_i}{n-\alpha q_i} \right )/2 -q_i =\frac{nq_{i}}{2(n-\alpha q_{i})} - \frac{q_i}{2} \geq \frac{4s}{2(n-\alpha )}>0,$$
there clearly exists some $i_p \in \mathbb{N}$ such that $q_{i_p} = p$. \newline
Since the estimate (\ref{J}) is trivially satisfied for $q=q_1=2$, and in view of the additional assumption that $U_\alpha \in L^{p}(\mathcal{B}_{r}(z),\mu_\alpha)$ we in particular have $U_\alpha \in L^{q_1}(\mathcal{B}_{r}(z),\mu_\alpha)$, if we choose $\delta$ small enough such that Corollary \ref{aprioriestcorscaled} is applicable with $q=2$ and $\widetilde q = q_2$, then all assumptions of Corollary \ref{aprioriestcorscaled} are satisfied with respect to $q=q_1=2$ and $\widetilde q = q_2\in (\min \{m,q_1\},(q_1)^\star)$, so that we obtain
\begin{equation} \label{q1est}
\begin{aligned}
\left (\dashint_{\mathcal{B}_{r/2}(z)} U_\alpha^{q_2} d\mu_\alpha \right )^{\frac{1}{q_2}} \leq & C \Bigg (\sum_{k=1}^\infty 2^{-k(s-\theta)} \left ( \dashint_{\mathcal{B}_{2^k r(z)}} U^2 d\mu \right )^\frac{1}{2} \\
& + \left ( \dashint_{\mathcal{B}_{r}(z)} G_\alpha^{q_2} d\mu_\alpha \right )^\frac{1}{q_2} + \sum_{k=1}^\infty 2^{-k(s-\theta)} \left ( \dashint_{\mathcal{B}_{2^k r}(z)} G^2 d\mu \right )^\frac{1}{2} \Bigg ),
\end{aligned}
\end{equation}
where $C_1=C_1(n,s,\alpha,\theta,\Lambda,m,p)>0$. If $i_p=2$, then $q_2=p$ and the proof is finished. Otherwise, we observe that since $r$ and $z$ are arbitrary, the estimate (\ref{q1est}) holds also in any ball that is contained in $B_r(z)$, so that that the estimate (\ref{J}) is satisfied with respect to $q=q_2$ in any ball contained in $B_r(z)$. Since also $U_\alpha \in L^{p}(\mathcal{B}_{r}(z),\mu_\alpha) \subset L^{q_2}(\mathcal{B}_{r}(z),\mu_\alpha)$, if we choose $\delta$ smaller if necessary such that Corollary \ref{aprioriestcorscaled} is applicable with $q=q_2$ and $\widetilde q = q_3$, then all assumptions of Corollary \ref{aprioriestcorscaled} are satisfied with respect to $q=q_2$ and $\widetilde q = q_3 = (q_2+(q_2)^\star)/2 \in (q_2,(q_2)^\star)$, so that we obtain the estimate 
\begin{align*}
\left (\dashint_{\mathcal{B}_{r/2}(z)} U_\alpha^{q_3} d\mu_\alpha \right )^{\frac{1}{q_3}} \leq & C_2 \Bigg (\sum_{k=1}^\infty 2^{-k(s-\theta)} \left ( \dashint_{\mathcal{B}_{2^k r(z)}} U^2 d\mu \right )^\frac{1}{2} \\
& + \left ( \dashint_{\mathcal{B}_{r}(z)} G_\alpha^{q_3} d\mu_\alpha \right )^\frac{1}{q_3} + \sum_{k=1}^\infty 2^{-k(s-\theta)} \left ( \dashint_{\mathcal{B}_{2^k r}(z)} G^2 d\mu \right )^\frac{1}{2} \Bigg ),
\end{align*}
where $C_2=C_2(n,s,\alpha,\theta,\Lambda,m,p)>0$.
If $i_p=3$, then $q_3=p$ and the proof is finished. Otherwise, iterating this procedure $i_p-1$ times and using that $q_{i_p}=p$ also leads to the estimate (\ref{estp}).
\end{proof}
Finally, by another delicate iteration argument we also drop the assumption that the estimate (\ref{hdest}) holds, achieving an a priori higher differentiability estimate for any $s<t<\min\{2s,1\}$.
\begin{prop} \label{aprioriestxy}
	Let $r>0$, $z \in \mathbb{R}^n$, $s \in (0,1)$, $s<t<\min\{2s,1\}$ and $p \in (2,\infty)$. Then there exists some small enough $\delta = \delta(p,n,s,t,\Lambda) > 0$ such that if $A \in \mathcal{L}_0(\Lambda)$ is $\delta$-vanishing in $\mathcal{B}_{r}(z)$ and $g$ belongs to $W^{s,2}(\mathbb{R}^n) \cap W^{t,p}(B_r(z))$, then for any weak solution $u \in W^{s,2}(\mathbb{R}^n) \cap W^{t,p}(B_r(z))$ of the equation $L_{A} u = (-\Delta)^s g$ in $B_{r}(z)$, we have
	\begin{equation} \label{estpy}
			[u]_{W^{t,p}(B_{r/2}(z))} \leq C \left ( [u]_{W^{s,2}(\mathbb{R}^n)} + [g]_{W^{t,p}(B_{r}(z))} + [g]_{W^{s,2}(\mathbb{R}^n)} \right ),
	\end{equation}
	where $C=C(n,s,t,\Lambda,p,r)>0$.
\end{prop}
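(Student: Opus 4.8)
The plan is to obtain (\ref{estpy}) from a \emph{single} application of Proposition \ref{aprioriestx} at a suitably chosen order $\alpha_\star$, the bulk of the work being a finite bootstrap that brings the auxiliary higher-differentiability estimate (\ref{hdest}) into force up to that order. As for the parameters: given $s<t<\min\{2s,1\}$ and $p\in(2,\infty)$, fix $\theta\in(0,\min\{s,1-s\})$ with both $t-s<\theta$ and $\theta(1-\tfrac2p)<t-s$ (possible since $\tfrac{p}{p-2}(t-s)>t-s$ and $t-s<\min\{s,1-s\}$). For $\alpha\in[s,s+\theta)$ put $\theta_\alpha=s+\theta-\alpha$ as in (\ref{modtheta}) and recall the dual-pair identity of Lemma \ref{Sobolevrelate}: on any ball, $U_\alpha\in L^p(\cdot,\mu_\alpha)$ iff $u\in W^{\Phi(\alpha),p}$, where
$$\Phi(\alpha):=\alpha+\theta_\alpha\Bigl(1-\tfrac2p\Bigr)=\tfrac2p\,\alpha+(s+\theta)\Bigl(1-\tfrac2p\Bigr)$$
is affine increasing with slope $\tfrac2p<1$ and fixed point $s+\theta>t$; thus $\Phi(\alpha)>\alpha$ on $[s,s+\theta)$. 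Set $\alpha_\star:=\Phi^{-1}(t)$, so that $s<\alpha_\star<s+\theta<\min\{2s,1\}$ and $U_{\alpha_\star}\in L^p(\cdot,\mu_{\alpha_\star})\iff u\in W^{t,p}$, and note $\Phi(s)=s+\theta(1-\tfrac2p)<t$. Finally pick $m$ in the range (\ref{mdef}). Since $\theta,m$ are chosen only in terms of $p,n,s,t$, the resulting $\delta$ depends only on $p,n,s,t,\Lambda$.

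\emph{The reduction.} Suppose (\ref{hdest}) holds at level $\alpha_\star$ in every ball $\subset B_r(z)$. Then Proposition \ref{aprioriestx} applies at $\alpha=\alpha_\star$: its hypotheses $U_{\alpha_\star}\in L^p(\mathcal B_r(z),\mu_{\alpha_\star})$ and $G\in L^p(\mathcal B_r(z),\mu)$ hold because $u\in W^{t,p}(B_r(z))$ and $g\in W^{t,p}(B_r(z))\hookrightarrow W^{\Phi(s),p}(B_r(z))$ (by $\Phi(s)<t$ and \cite[Proposition 2.1]{Hitch}). Estimate (\ref{estp}) at $\alpha_\star$ results, and it remains to unwind the averages: by Lemma \ref{Sobolevrelate} and Proposition \ref{doublingmeasure} its left-hand side equals a dimensional constant times $r^{-(n+2\theta_{\alpha_\star})/p}[u]_{W^{t,p}(B_{r/2}(z))}$ and its $G_{\alpha_\star}^p$-term equals a constant times $r^{-(n+2\theta_{\alpha_\star})/p}[g]_{W^{t,p}(B_r(z))}$, while the tail sums obey
$$\sum_{k\ge1}2^{-k(s-\theta)}\Bigl(\dashint_{\mathcal B_{2^kr}(z)}U^2\,d\mu\Bigr)^{1/2}\le C(n,s,\theta)\,r^{-(n+2\theta)/2}[u]_{W^{s,2}(\mathbb R^n)}$$
(and likewise for $G$), since $\int_{\mathbb R^{2n}}U^2\,d\mu=[u]^2_{W^{s,2}(\mathbb R^n)}$ and $\mu(\mathcal B_\rho)=c\rho^{n+2\theta}$. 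Absorbing the fixed powers of $r$ into $C$ yields (\ref{estpy}).

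\emph{The bootstrap.} I would build a finite chain $s=\alpha_0<\alpha_1<\dots<\alpha_K=\alpha_\star$ with $\alpha_{j+1}<\Phi(\alpha_j)$ for every $j$; finitely many steps suffice because $\Phi(\alpha_j)-\alpha_j=(1-\tfrac2p)(s+\theta-\alpha_j)\ge(1-\tfrac2p)(s+\theta-\alpha_\star)>0$ uniformly on $[s,\alpha_\star]$, so a constant increment below that bound works. Then I claim by induction on $j$ that (\ref{hdest}) holds at level $\alpha_j$ in every ball $\subset B_r(z)$. For $j=0$ this is free: at $\alpha_0=s$, $m=2$, (\ref{hdest}) is a standard fractional Caccioppoli inequality for $L_Au_0=(-\Delta)^sg$ (test with $u_0\eta^2$, or use Proposition \ref{appplxy}), valid unconditionally. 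Given the claim at stage $j$, the reduction above applied with $\alpha_j$ in place of $\alpha_\star$ — using $\Phi(\alpha_j)\le\Phi(\alpha_\star)=t$ so that $u\in W^{t,p}\hookrightarrow W^{\Phi(\alpha_j),p}$ supplies the integrability of $U_{\alpha_j}$ — shows via Proposition \ref{aprioriestx} that the a priori estimate (\ref{estp}) at level $\alpha_j$ holds for \emph{every} weak solution of $L_A(\cdot)=(-\Delta)^sg$ in any ball $\subset B_r(z)$. To pass to stage $j+1$, take a weak solution $u_0$ in $B_\rho(x_0)\subset B_r(z)$; by linearity $u_0$ (and the comparison difference figuring in (\ref{hdest})) still solves the same equation, so the level-$\alpha_j$ estimate bounds $[u_0]_{W^{\Phi(\alpha_j),p}(B_{\rho/2}(x_0))}$, and combining this with the fractional Sobolev embedding $W^{\Phi(\alpha_j),p}(B_{\rho/2}(x_0))\hookrightarrow W^{\alpha_{j+1},2}(B_{\rho/2}(x_0))$ of Proposition \ref{Sobcont} (legitimate since $\alpha_{j+1}<\Phi(\alpha_j)<1$, scale-correct by (\ref{diffemb})) controls $[u_0]_{W^{\alpha_{j+1},2}(B_{\rho/2}(x_0))}$ by the required combination of $U_0$-tails, $G$-tails, and a $g$-term, the last being recast into the form $\bigl(\dashint_{\mathcal B_\rho(x_0)}G_{\alpha_{j+1}}^m\,d\mu_{\alpha_{j+1}}\bigr)^{1/m}$ of (\ref{hdest}) via the embeddings of Propositions \ref{Sobemb}, \ref{BesselTr}, \ref{Sobcont} together with $g\in W^{t,p}(B_r(z))$. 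This gives (\ref{hdest}) at $\alpha_{j+1}$, and at $j=K$ the reduction finishes the proof. Each of the finitely many uses of Proposition \ref{aprioriestx} carries its own smallness threshold, so take $\delta$ to be the smallest of them.

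\emph{The main obstacle.} I expect the only real difficulty to be the final recasting step: the level-$\alpha_j$ estimate plus the embedding naturally deliver the $g$-contribution in integrability exponent $p$ at differentiability $\Phi(\alpha_j)$, whereas (\ref{hdest}) at level $\alpha_{j+1}$ demands it in exponent $m$ at differentiability $\alpha_{j+1}+\theta_{\alpha_{j+1}}(1-\tfrac2m)$. Reconciling the two forces a Sobolev embedding of the type $W^{\,\alpha_{j+1}+\theta_{\alpha_{j+1}}(1-2/m),\,m}\hookrightarrow W^{\Phi(\alpha_j),p}$, and keeping the associated inequality between exponents valid \emph{while still gaining differentiability at each step} is exactly what dictates the admissible $m$ inside (\ref{mdef}) and how small the increments $\alpha_{j+1}-\alpha_j$ must be — hence why $K$ grows with $p$ and why the procedure must be run incrementally rather than in one stroke. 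Should a single embedding of this kind fail to close for large $p$, one interleaves the differentiability increments with the integrability increments already present in Corollary \ref{aprioriestcorscaled}, advancing both by small amounts per stage so that all intervening embeddings remain admissible; this is the delicate iteration promised in the outline. Everything else — the tail estimates, the scaling bookkeeping, and the base case — is routine.
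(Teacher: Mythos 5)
Your overall architecture --- bootstrap the auxiliary estimate (\ref{hdest}) up to a target order and then invoke Proposition \ref{aprioriestx} once --- is the paper's, and your reduction step, base case, chain-finiteness argument and tail/scaling computations are all sound. But the inductive step contains a genuine gap, and it is exactly the one you flag at the end without resolving. If (\ref{hdest}) at level $\alpha_{j+1}$ is derived by applying the a priori estimate (\ref{estp}) at level $\alpha_j$ with integrability exponent $p$, the right-hand side carries the source term $\bigl(\dashint G_{\alpha_j}^{p}\,d\mu_{\alpha_j}\bigr)^{1/p}$, whereas (\ref{hdest}) requires it in the form $\bigl(\dashint G_{\alpha_{j+1}}^{m}\,d\mu_{\alpha_{j+1}}\bigr)^{1/m}$ with $m<\tfrac{2(n-s)}{n-2s}$ bounded independently of $p$. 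Dominating the $L^p$ average by the $L^m$ average forces the integrability-gaining embedding $W^{\alpha_{j+1}+\theta_{\alpha_{j+1}}(1-2/m),\,m}\hookrightarrow W^{\Phi(\alpha_j),\,p}$, i.e.\ $\alpha_{j+1}+\theta_{\alpha_{j+1}}(1-\tfrac2m)\ge\Phi(\alpha_j)+n(\tfrac1m-\tfrac1p)$; the left side is below $s+\theta$ while the right side is at least $s+\theta-\tfrac{2\theta}{p}+\tfrac nm-\tfrac np$, so the inequality forces $m>\tfrac{np}{n+2\theta}>p/2$ and fails for all large $p$. "Interleaving increments" does not repair this: the point is to never need an integrability gain on the $g$-term at all. (A secondary issue: applying the level-$\alpha_j$ a priori estimate at exponent $p$ to the arbitrary $W^{s,2}$ solutions $u_0$ for which (\ref{hdest}) must hold --- e.g.\ the comparison differences $w=u-v$ inside Lemma \ref{apppl} --- presupposes the qualitative hypothesis $U_{0,\alpha_j}\in L^p$, which is not available for such $u_0$.)

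The missing idea, which is the heart of the paper's proof: run each bootstrap step with Proposition \ref{aprioriestx} at the \emph{small} exponent $m_k\in\bigl(2,\min\{\tfrac{2(n-s)}{n-2s},p\}\bigr)$ from (\ref{mdef}), not at $p$. Then the source term emerges already at exponent $m_k$, and recasting $\bigl(\dashint G_{t_{k-1},\theta_{t_{k-1}}}^{m_k}d\mu_{\theta_{t_{k-1}}}\bigr)^{1/m_k}$ into the level-$t_k$ form demanded by (\ref{hdest}) is a same-integrability differentiability comparison (Proposition \ref{Sobcont} with $p=p_0=m_k$, scale-corrected by (\ref{diffemb})), which always holds. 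The price is a per-step gain of only $\tfrac12(1-\tfrac{2}{m_k})\theta_{t_{k-1}}$ instead of your $(1-\tfrac2p)\theta_{\alpha_j}$; but since $s+\theta-t_k=(1-\varepsilon_k/2)(s+\theta-t_{k-1})$ with $\varepsilon_k=1-\tfrac{2}{m_k}$ bounded below, $t_k\to s+\theta$ geometrically and any $t<s+\theta$ is exceeded after finitely many steps. Only after (\ref{hdest}) is secured at some $t_{\widetilde k}<t$ does one apply the a priori estimate at exponent $p$, with $\theta$ readjusted (the paper's $\theta_t$, $\widetilde\theta$; your $\Phi^{-1}(t)$ normalization plays the same role) so as to land exactly on $W^{t,p}$.
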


\begin{proof}
Fix some $s <t<\min\{2s,1\}$ and some $p \in (2,\infty)$. All constants in this proof will only depend on $n,s,t,\Lambda,p$ and $r$. First of all, the assumption that $u \in W^{t,p}(B_r(z))$ implies that $U_\alpha=U_{\alpha,\theta_\alpha} \in L^p(B_r(z),\mu_\alpha)$ for any $s \leq \alpha<\min\{2s,1\}$ such that $\alpha + \left (1-\frac{2}{p} \right )\theta_\alpha \leq t$. \par Let $\delta>0$ be to be chosen small enough, fix some $0<\gamma<\min\{2s,1\}-t$ and choose the parameter $\theta$ by $\theta:=\min\{s,1-s\}-\gamma \in (0,\min\{s,1-s\})$, so that in particular $t<s+\theta$. In addition, define sequences of parameters $\{m_k\}_{k \in \mathbb{N}}$ and $\{\varepsilon_k\}_{k \in \mathbb{N}}$ by $$m_k:=\frac{1}{k} \min \left \{\frac{2n-3s}{n-2s},1+\frac{p}{2} \right \} +\left (1-\frac{1}{k} \right ) \min \left \{\frac{2(n-s)}{n-2s},p \right \} \in \left (2, \min \left \{\frac{2(n-s)}{n-2s},p \right \} \right )$$ and $$ \varepsilon_k:=1-\frac{2}{m_k} \in (0,1).$$ In particular, note that as indicated above, for any $k \in \mathbb{N}$ the parameter $m_k$ belongs to the range given by (\ref{mdef}). Define inductively further sequences of parameters $\{t_k\}_{k \in \mathbb{N}_0}$ and $\{\theta_{t_k}\}_{k \in \mathbb{N}_0}$ by $t_0:=s$, $\theta_{t_0}:=\theta$ and
$$ t_k:=t_{k-1}+\frac{\varepsilon_k \theta_{t_{k-1}}}{2}, \quad \theta_{t_{k}}:=s+\theta-t_{k}, \quad k \geq 1.$$
Let $$\varepsilon_\star := \lim_{k \to \infty} \varepsilon_k = 1-2 / \min \left \{\frac{2(n-s)}{n-2s},p \right \}>0.$$ Since the sequence $\{t_k\}_{k \in \mathbb{N}_0}$ is strictly increasing and bounded by $s+\theta$, the limit $t_\star:= \lim_{k \to \infty} t_k$ exists and satisfies
$t_\star=t_\star+\frac{\varepsilon_\star}{2}(s+\theta-t_\star),$ which leads to $t_\star=s+\theta$.
Thus, since we have $t<s+\theta=t_\star$, there exists a non-negative integer $\widetilde k$ such that $t_{\widetilde k} < t$ but $t_{\widetilde k +1} \geq t$. Also, define
$$ \theta_t:=\frac{t-t_{\widetilde k}}{1-2/p}, \quad \widetilde \theta := \theta_t+t_{\widetilde k}-s$$
and note that since $p \geq m_{\widetilde k}$, we have $$\theta_t \leq \frac{t-t_{\widetilde k +1}+\varepsilon_{\widetilde k} \theta_{t_{\widetilde k}}}{\varepsilon_{\widetilde k}} \leq \theta_{t_{\widetilde k}}=s+\theta-t_{\widetilde k},$$
which implies that
$$ 0<\widetilde \theta \leq \theta < \min \{s,1-s\}.$$
Thus, $\widetilde \theta$ also belongs to the range (\ref{theta}) and the relation (\ref{modtheta}) is satisfied for $\theta_\alpha=\theta_t$, $\alpha=t_{\widetilde k}$ and with $\theta$ replaced by $\widetilde \theta$, that is, we have $\theta_t=s+\widetilde \theta - t_{\widetilde k}$.
In addition, observe that $$t_{\widetilde k} + \left (1-\frac{2}{p} \right )\theta_t=t.$$ \par 
If $\widetilde k=0$, then since for $\alpha=t_0=s$, the estimate (\ref{hdest}) is trivially satisfied with $m=2$, by Corollary \ref{aprioriestx} with $\theta_\alpha=\theta_t$ and with $\theta$ replaced by $\widetilde \theta$, for $\delta$ small enough we have
	\begin{align*}
		[u]_{W^{t,p}(B_{r/2}(z))} = & C_1 \left (\dashint_{\mathcal{B}_{r/2}(z)} U_{s,\theta_t}^{p} d\mu_{\theta_t} \right )^{\frac{1}{p}} \\ \leq & C_2 \Bigg (\sum_{k=1}^\infty 2^{-k(s-\widetilde \theta)} \left ( \dashint_{\mathcal{B}_{2^k r}(z)} U_{s,\widetilde \theta}^2 d\mu_{\widetilde \theta} \right )^\frac{1}{2} \\
		& + \left ( \dashint_{\mathcal{B}_{r}(z)} G_{s,\theta_t}^{p} d\mu_{\theta_t} \right )^\frac{1}{p} + \sum_{k=1}^\infty 2^{-k(s-\widetilde \theta)} \left ( \dashint_{\mathcal{B}_{2^k r}(z)} G_{s,\widetilde \theta}^2 d\mu_{\widetilde \theta} \right )^\frac{1}{2} \Bigg ) \\
		\leq & C_3 \left ([u]_{W^{s,2}(\mathbb{R}^n)} + [g]_{W^{t,p}(B_{r}(z))} + [g]_{W^{s,2}(\mathbb{R}^n)} \right ).
	\end{align*}
In the case when $\widetilde k=0$, the proof is finished. If on the other hand $\widetilde k>0$, then for any $x_0 \in B_r(z)$ and any $r^\prime>0$ such that $B_{r^\prime}(x_0) \subset B_r(z)$, using Proposition \ref{Sobcont}, Corollary \ref{aprioriestx} with $p$ replaced by $m_1$ along with Lemma \ref{Urel} yields
	\begin{align*}
	& \left (\frac{1}{\mu_{\theta_{t_1}}(\mathcal{B}_{r^\prime}(x_0))} \int_{B_{r^\prime/2}(x_0)} \int_{B_{r^\prime/2}(x_0)} \frac{(u(x)-u(y))^2}{|x-y|^{n+2t_1}}dydx \right )^\frac{1}{2} \\ \leq & C_4 (r^\prime)^{-\theta_{t_1}-\frac{n}{m_1}+\frac{\varepsilon_1 \theta}{2}} \left ( \int_{B_{r^\prime/2}(x_0)} \int_{B_{r^\prime/2}(x_0)} \frac{|u(x)-u(y)|^{m_1}}{|x-y|^{n+m_1(t_1+ \varepsilon_1 \theta/2)}}dydx \right )^\frac{1}{m_1} \\ = & C_4 (r^\prime)^{-\frac{n}{m_1}-\frac{2\theta}{m_1}} \left ( \int_{B_{r^\prime/2}(x_0)} \int_{B_{r^\prime/2}(x_0)} \frac{|u(x)-u(y)|^{m_1}}{|x-y|^{n+m_1(s+\varepsilon_1 \theta)}}dydx \right )^\frac{1}{m_1} \\ = & C_5 \left (\dashint_{\mathcal{B}_{r^\prime/2}(x_0)} U_{s,\theta}^{m_1} d\mu \right )^{\frac{1}{m_1}} \\ \leq & C_6 \Bigg (\sum_{k=1}^\infty 2^{-k(s-\theta)} \left ( \dashint_{\mathcal{B}_{2^k r^\prime}(x_0)} U^2 d\mu \right )^\frac{1}{2} \\
	& + \left ( \dashint_{\mathcal{B}_{r^\prime}(x_0)} G_{s,\theta}^{m_1} d\mu \right )^\frac{1}{m_1} + \sum_{k=1}^\infty 2^{-k(s-\theta)} \left ( \dashint_{\mathcal{B}_{2^k r^\prime}(x_0)} G^2 d\mu \right )^\frac{1}{2} \Bigg ) \\
	\leq & C_7 \Bigg (\sum_{k=1}^\infty 2^{-k(s-\theta)} \left ( \dashint_{\mathcal{B}_{2^k r^\prime}(x_0)} U^2 d\mu \right )^\frac{1}{2} \\
	& + \left ( \dashint_{\mathcal{B}_{r^\prime}(x_0)} G_{t_1,\theta_{t_1}}^{m_1} d\mu_{\theta_{t_1}} \right )^\frac{1}{m_1} + \sum_{k=1}^\infty 2^{-k(s-\theta)} \left ( \dashint_{\mathcal{B}_{2^k r^\prime}(x_0)} G^2 d\mu \right )^\frac{1}{2} \Bigg )
	\end{align*}
for $\delta$ small enough and any weak solution $u \in W^{s,2}(\mathbb{R}^n)$ of $L_{A} u = (-\Delta)^s g$ in $B_{r}(z)$. Thus, since in addition $C_7$ does not depend on $r$ and $r^\prime$, we conclude that the estimate (\ref{hdest}) is satisfied in any ball contained in $B_r(z)$ with respect to $\alpha=t_1$ and $m=m_1$. Therefore, in the case when $\widetilde k=1$, once again by Corollary \ref{aprioriestx} with $\theta_\alpha=\theta_t$ (which is applicable since $m_1<p$) and with $\theta$ replaced by $\widetilde \theta$, we see that 
\begin{align*}
	[u]_{W^{t,p}(B_{r/2}(z))} = & C_8 \left (\dashint_{\mathcal{B}_{r/2}(z)} U_{t_1,\theta_t}^{p} d\mu_{\theta_t} \right )^{\frac{1}{p}} \\ \leq & C_9 \Bigg (\sum_{k=1}^\infty 2^{-k(s-\widetilde \theta)} \left ( \dashint_{\mathcal{B}_{2^k r}(z)} U_{s,\widetilde \theta}^2 d\mu_{\widetilde \theta} \right )^\frac{1}{2} \\
	& + \left ( \dashint_{\mathcal{B}_{r}(z)} G_{t_1,\theta_t}^{p} d\mu_{\theta_t} \right )^\frac{1}{p} +\sum_{k=1}^\infty 2^{-k(s-\widetilde \theta)} \left ( \dashint_{\mathcal{B}_{2^k r}(z)} G_{s,\widetilde \theta}^2 d\mu_{\widetilde \theta} \right )^\frac{1}{2} \\
	\leq & C_{10} \left ([u]_{W^{s,2}(\mathbb{R}^n)} + [g]_{W^{t,p}(B_{r}(z))} + [g]_{W^{s,2}(\mathbb{R}^n)} \right )
\end{align*}
for $\delta$ small enough,
so that in this case the proof is finished. If $\widetilde k>1$, then since $m_2>m_1$, for any $x_0 \in B_r(z)$ and any $r^\prime>0$ such that $B_{r^\prime}(x_0) \subset B_r(z)$, by Proposition \ref{Sobcont}, Corollary \ref{aprioriestx} with $p$ replaced by $m_2$ and Lemma \ref{Urel}, for any weak solution $u \in W^{s,2}(\mathbb{R}^n)$ of $L_{A} u = (-\Delta)^s g$ in $B_{r}(z)$ and $\delta$ small enough we have 
\begin{align*}
	& \left (\frac{1}{\mu_{\theta_{t_2}}(\mathcal{B}_{r^\prime}(x_0))} \int_{B_{r^\prime/2}(x_0)} \int_{B_{r^\prime/2}(x_0)} \frac{(u(x)-u(y))^2}{|x-y|^{n+2t_2}}dydx \right )^\frac{1}{2} \\ \leq & C_{11} (r^\prime)^{-\theta_{t_2}-\frac{n}{m_2}+\frac{\varepsilon_2 \theta_{t_1}}{2}} \left ( \int_{B_{r^\prime/2}(x_0)} \int_{B_{r^\prime/2}(x_0)} \frac{|u(x)-u(y)|^{m_2}}{|x-y|^{n+m_2(t_2+ \varepsilon_2 \theta_{t_1}/2)}}dydx \right )^\frac{1}{m_2} \\ = & C_{11} (r^\prime)^{-\frac{n}{m_2}-\frac{2\theta}{m_2}} \left ( \int_{B_{r^\prime/2}(x_0)} \int_{B_{r^\prime/2}(x_0)} \frac{|u(x)-u(y)|^{m_2}}{|x-y|^{n+m_2(t_1+\varepsilon_2 \theta_{t_1})}}dydx \right )^\frac{1}{m_2} \\ = & C_{12} \left (\dashint_{\mathcal{B}_{r^\prime/2}(x_0)} U_{t_1,\theta_{t_1}}^{m_2} d\mu_{\theta_{t_1}} \right )^{\frac{1}{m_2}} \\ \leq & C_{13} \Bigg (\sum_{k=1}^\infty 2^{-k(s-\theta)} \left ( \dashint_{\mathcal{B}_{2^k r^\prime}(x_0)} U^2 d\mu \right )^\frac{1}{2} \\
	& + \left ( \dashint_{\mathcal{B}_{r^\prime}(x_0)} G_{t_1,\theta_{t_1}}^{m_2} d\mu_{\theta_{t_1}} \right )^\frac{1}{m_2} + \sum_{k=1}^\infty 2^{-k(s-\theta)} \left ( \dashint_{\mathcal{B}_{2^k r^\prime}(x_0)} G^2 d\mu \right )^\frac{1}{2} \Bigg ) \\
	\leq & C_{14} \Bigg (\sum_{k=1}^\infty 2^{-k(s-\theta)} \left ( \dashint_{\mathcal{B}_{2^k r^\prime}(x_0)} U^2 d\mu \right )^\frac{1}{2} \\
	& + \left ( \dashint_{\mathcal{B}_{r^\prime}(x_0)} G_{t_2,\theta_{t_2}}^{m_2} d\mu_{\theta_{t_2}} \right )^\frac{1}{m_2} + \sum_{k=1}^\infty 2^{-k(s-\theta)} \left ( \dashint_{\mathcal{B}_{2^k r^\prime}(x_0)} G^2 d\mu \right )^\frac{1}{2} \Bigg ) ,
\end{align*}
where $C_{14}$ does not depend on $r$ and $r^\prime$, so that (\ref{hdest}) is satisfied in any ball contained in $B_r(z)$ with respect to $\alpha=t_2$ and $m=m_2$. Thus, if $\widetilde k=2$, again by applying Corollary \ref{aprioriestx} with respect to $\theta_\alpha=\theta_t$ and with $\theta$ replaced by $\widetilde \theta$ we see that the desired estimate (\ref{estpy}) holds, so that in this case the proof is finished. If $\widetilde k>2$, then iterating the above procedure $\widetilde k$ times also leads to the estimate (\ref{estpy}), which finishes the proof.
\end{proof}

We are now able to prove an a priori $W^{t,p}$ estimate for equations of the type $L_A u = (-\Delta)^s g$ in the case when $A$ is small in BMO.
\begin{thm} \label{mainright}
	Let $\Omega \subset \mathbb{R}^n$ be a domain, $s \in (0,1)$, $\Lambda \geq 1$, $s<t<\min\{2s,1\}$, $p \in (2,\infty)$ and $R>0$. Then there exists some small enough $\delta = \delta(p,n,s,t,\Lambda) > 0$ such that if $A \in \mathcal{L}_0(\Lambda)$ is $(\delta,R)$-BMO in $\Omega$ and $g$ belongs to $W^{s,2}(\mathbb{R}^n) \cap W^{t,p}(\Omega)$, then for any weak solution $u \in W^{s,2}(\mathbb{R}^n) \cap W^{t,p}(\Omega)$ of the equation $L_{A} u = (-\Delta)^s g$ in $\Omega$ and any relatively compact domain $\Omega^\prime \Subset \Omega$, we have
	\begin{equation} \label{estpym}
		[u]_{W^{t,p}(\Omega^\prime)} \leq C \left ( [u]_{W^{s,2}(\mathbb{R}^n)} + [g]_{W^{t,p}(\Omega)} + [g]_{W^{s,2}(\mathbb{R}^n)} \right ),
	\end{equation}
	where $C=C(n,s,t,\Lambda,R,p,\Omega^\prime,\Omega)>0$.
\end{thm}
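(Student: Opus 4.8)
The plan is to localize the a priori estimate of Proposition~\ref{aprioriestxy} by a standard finite covering argument; the only genuinely delicate point is that the left-hand side of (\ref{estpym}) is a Gagliardo seminorm over all of $\Omega^\prime\times\Omega^\prime$, including pairs $(x,y)$ that are far apart, whereas Proposition~\ref{aprioriestxy} only controls the near-diagonal contribution on small balls. First I would fix an auxiliary radius $r_0\in(0,R]$ small enough that $B_{r_0}(z)\Subset\Omega$ for every $z\in\overline{\Omega^\prime}$; since $A$ is $(\delta,R)$-BMO in $\Omega$, it is then $\delta$-vanishing in $\mathcal{B}_{r_0}(z)$ for each such $z$, so Proposition~\ref{aprioriestxy} applies on each $B_{r_0}(z)$ with one and the same $\delta=\delta(p,n,s,t,\Lambda)$. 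By compactness choose a finite subcover $\{B_{r_0/4}(z_j)\}_{j=1}^N$ of $\overline{\Omega^\prime}$ and split
$$[u]_{W^{t,p}(\Omega^\prime)}^p=\int_{\Omega^\prime}\int_{\Omega^\prime,\,|x-y|<r_0/4}\frac{|u(x)-u(y)|^p}{|x-y|^{n+tp}}\,dy\,dx+\int_{\Omega^\prime}\int_{\Omega^\prime,\,|x-y|\geq r_0/4}\frac{|u(x)-u(y)|^p}{|x-y|^{n+tp}}\,dy\,dx.$$

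For the near-diagonal part, note that if $x\in B_{r_0/4}(z_j)$ and $|x-y|<r_0/4$ then $y\in B_{r_0/2}(z_j)$, so this term is at most $\sum_{j=1}^N[u]_{W^{t,p}(B_{r_0/2}(z_j))}^p$; applying Proposition~\ref{aprioriestxy} on each $B_{r_0}(z_j)$ (the hypotheses $u,g\in W^{s,2}(\mathbb{R}^n)\cap W^{t,p}(B_{r_0}(z_j))$ and $L_Au=(-\Delta)^sg$ in $B_{r_0}(z_j)$ follow by restriction) and then $[g]_{W^{t,p}(B_{r_0}(z_j))}\leq[g]_{W^{t,p}(\Omega)}$ bounds this term by $C([u]_{W^{s,2}(\mathbb{R}^n)}+[g]_{W^{t,p}(\Omega)}+[g]_{W^{s,2}(\mathbb{R}^n)})^p$. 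For the far-diagonal part, I would use that the integrand only involves the difference $u(x)-u(y)$ to replace $u$ by $u-\overline{u}_{\Omega^\prime}$, which bounds this term by $C(r_0,\Omega^\prime,p)\,\|u-\overline{u}_{\Omega^\prime}\|_{L^p(\Omega^\prime)}^p\leq C\,\|u\|_{L^p(\Omega^\prime)}^p$ after absorbing constants. Hence the whole proof reduces to the estimate $\|u\|_{L^p(\Omega^\prime)}\leq C([u]_{W^{s,2}(\mathbb{R}^n)}+[g]_{W^{t,p}(\Omega)}+[g]_{W^{s,2}(\mathbb{R}^n)})$, with $C$ allowed to depend on $n,s,t,\Lambda,R,p,\Omega^\prime,\Omega$.

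This last bound is the main obstacle, since the right-hand side of (\ref{estpym}) contains no $L^p$-norm of $u$. I would prove it by an integrability bootstrap. The base step is the homogeneous fractional Sobolev inequality $\|u\|_{L^{2^\ast}(\mathbb{R}^n)}\leq C_n[u]_{W^{s,2}(\mathbb{R}^n)}$ with $2^\ast:=\tfrac{2n}{n-2s}$, which holds since $n>2s$ (see e.g.\ \cite{Hitch}); combined with H\"older on the bounded set $\Omega^\prime$ this already settles the case $p\leq 2^\ast$. For $p>2^\ast$, fix $t^\prime$ with $s<t^\prime<\min\{2s,1,t\}$, a finite nested chain $\Omega^\prime\Subset\Omega^{(K)}\Subset\dots\Subset\Omega^{(0)}\Subset\Omega$ of smooth subdomains, and exponents $q_0:=2^\ast$, $q_{i+1}:=\min\{\tfrac{nq_i}{n-t^\prime q_i},p\}$ (read as $p$ once $t^\prime q_i\geq n$); since the increment $q_{i+1}-q_i$ is bounded below by a fixed positive constant as long as $q_i<p$, one reaches $q_K=p$ in finitely many steps. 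Because $u,g\in W^{t,p}(\Omega)$, Proposition~\ref{Sobcont} gives $u,g\in W^{t^\prime,q_i}(\Omega^{(i)})$ for all $i$ (using $q_i\leq p$ and $t^\prime<t$), which supplies exactly the qualitative membership required to invoke Proposition~\ref{aprioriestxy} at level $(t^\prime,q_i)$ and circumvents the non-nesting of the fractional Sobolev scale in the integrability exponent; the fractional Sobolev embedding on balls (Proposition~\ref{Sobemb}), together with the base step applied to $g$, moreover controls $\|g\|_{W^{t^\prime,q_i}(\Omega^{(i)})}$ by $C([g]_{W^{t,p}(\Omega)}+[g]_{W^{s,2}(\mathbb{R}^n)})$. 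Running the same near/far covering argument as in the first two paragraphs at level $(t^\prime,q_i)$ on $\Omega^{(i)}$ — with the far part handled inductively by $\|u\|_{L^{q_i}(\Omega^{(i-1)})}$ — yields $[u]_{W^{t^\prime,q_i}(\Omega^{(i)})}\leq C([u]_{W^{s,2}(\mathbb{R}^n)}+[g]_{W^{t,p}(\Omega)}+[g]_{W^{s,2}(\mathbb{R}^n)})$, and Proposition~\ref{Sobemb} then upgrades this to the bound $\|u\|_{L^{q_{i+1}}(\Omega^{(i+1)})}\leq C(\cdots)$. Iterating to $i=K$ produces $\|u\|_{L^p(\Omega^\prime)}\leq C(\cdots)$, completing the far-diagonal estimate and hence the theorem; the required $\delta$ is the minimum of the finitely many thresholds from Proposition~\ref{aprioriestxy} used along the way, all of which depend only on $p,n,s,t,\Lambda$.
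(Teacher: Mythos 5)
Your proposal is correct, and it treats the one delicate step more carefully than, and differently from, the paper. The paper's own proof is very short: it covers $\overline{\Omega^\prime}$ by finitely many balls $B_{r/2}(z_i)$ with $B_r(z_i)\Subset\Omega$ and $r<R$, applies Proposition~\ref{aprioriestxy} on each $B_r(z_i)$, and then simply sums, writing $[u]_{W^{t,p}(\Omega^\prime)}\leq\sum_{i=1}^N[u]_{W^{t,p}(B_{r/2}(z_i))}$ as the first line of (\ref{cest}); read literally, that inequality only accounts for pairs $(x,y)$ lying in a common ball of the cover, and the cross-ball (far-off-diagonal) pairs are left implicit (the analogous summation in Corollary~\ref{aprioriestcor} defers to \cite[Corollary 8.3]{MeV}). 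You isolate exactly this far-off-diagonal contribution, bound it by $C\|u\|_{L^p(\Omega^\prime)}^p$, and then obtain the missing $L^p$ bound by a bootstrap: the inequality $\|u\|_{L^{\frac{2n}{n-2s}}(\mathbb{R}^n)}\leq C[u]_{W^{s,2}(\mathbb{R}^n)}$ as base step, then repeated applications of Proposition~\ref{aprioriestxy} at intermediate exponents $q_i$ on nested smooth subdomains combined with Proposition~\ref{Sobemb}; the qualitative memberships you need do follow from Proposition~\ref{Sobcont}, and the finitely many $\delta$-thresholds depend only on $p,n,s,t,\Lambda$, so this is sound. What each route buys: the paper's argument is minimal but terse at the summation step; yours is fully rigorous but heavier than necessary, since once the near-diagonal part yields $[u]_{W^{t,p}(B_{r_0/2}(z_j))}\leq C([u]_{W^{s,2}(\mathbb{R}^n)}+[g]_{W^{t,p}(\Omega)}+[g]_{W^{s,2}(\mathbb{R}^n)})$ on each covering ball, the bound $\|u\|_{L^p(\Omega^\prime)}\leq C(\cdots)$ follows with no iteration at all from the fractional Sobolev--Poincar\'e inequality (Lemma~\ref{SobPoincare}) on each ball together with $|\overline u_{B_{r_0/2}(z_j)}|\leq\bigl(\dashint_{B_{r_0/2}(z_j)}|u|^{\frac{2n}{n-2s}}dx\bigr)^{\frac{n-2s}{2n}}\leq C(r_0)[u]_{W^{s,2}(\mathbb{R}^n)}$. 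One harmless slip in your induction: at stage $i$ you record the upgraded bound as $\|u\|_{L^{q_{i+1}}(\Omega^{(i+1)})}\leq C(\cdots)$, while stage $i+1$ needs it on the larger set $\Omega^{(i)}$; since the Sobolev embedding at stage $i$ is applied on $\Omega^{(i)}$ itself (where both $[u]_{W^{t^\prime,q_i}(\Omega^{(i)})}$ and $\|u\|_{L^{q_i}(\Omega^{(i)})}$ are already controlled), it in fact gives $\|u\|_{L^{q_{i+1}}(\Omega^{(i)})}\leq C(\cdots)$, and the induction closes after this re-indexing.
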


\begin{proof}
Fix a relatively compact bounded domain ${\Omega^\prime} \Subset \Omega$ and let $\delta=\delta(p,n,s,t,\Lambda)>0$ be given by Proposition \ref{aprioriestxy}. There exists some $r \in (0,R)$ such that for any $z \in \Omega^\prime$, we have $B_r(z) \Subset \Omega$. Since $A$ is $(\delta,R)$-BMO in $\Omega$, for any $z \in \Omega^\prime$ we conclude that $A$ is $\delta$-vanishing in $B_{r}(z)$. Also, since $u \in W^{t,p}(\Omega)$, we have $u \in W^{t,p}(B_{r}(z))$ for any $z \in \Omega^\prime$. Therefore, by Proposition \ref{aprioriestxy}, for any $z \in \Omega^\prime$ we obtain the estimate
\begin{equation} \label{ap}
	[u]_{W^{t,p}(B_{r/2}(z))}
	\leq C_1 \left ([u]_{W^{s,2}(\mathbb{R}^n)} + [g]_{W^{t,p}(B_{r}(z))} + [g]_{W^{s,2}(\mathbb{R}^n)} \right ) ,
\end{equation}
where $C_1=C_1(n,s,t,\Lambda,p,r)$. \par 
Since $\left \{B_{r/2}(z) \right \}_{z \in {\Omega^\prime}}$ is an open covering of $\overline {\Omega^\prime}$ and $\overline {\Omega^\prime}$ is compact, there exists a finite subcover $\left \{B_{r/2}(z_i) \right \}_{i=1}^N$ of $\overline {\Omega^\prime}$ and hence of $\Omega^\prime$. Now summing over $i=1,...,N$ and using the estimate (\ref{ap}) for $z=z_i$ ($i=1,...,N$) yields
\begin{equation} \label{cest}
	\begin{aligned}
		[u]_{W^{t,p}(\Omega^\prime)} \leq & \sum_{i=1}^N [u]_{W^{t,p}(B_{r/2}(z_i))} \\
		\leq & \sum_{i=1}^N C_2 \left ([u]_{W^{s,2}(\mathbb{R}^n)} + [g]_{W^{t,p}(B_{r}(z))} + [g]_{W^{s,2}(\mathbb{R}^n)} \right )\\
		\leq & C_{2}N \left ([u]_{W^{s,2}(\mathbb{R}^n)} + [g]_{W^{t,p}({\Omega})} + [g]_{W^{s,2}(\mathbb{R}^n)} \right ),
	\end{aligned}
\end{equation}
where $C_{2}=C_{2}(n,s,t,\Lambda,p,r)>0$.
Since $N$ depends only on $\Omega^\prime$ and ${\Omega}$, while $r$ depends only on $R,\Omega^\prime$ and $\Omega$, this proves the estimate (\ref{estpym}), so that the proof is finished.
\end{proof}

\begin{rem} \label{genrem} \normalfont
Since it might be useful in some applications, we remark that the statement of Proposition \ref{mainright} can be generalized to the setting of a right-hand side that is given by a more general nonlocal operator or even by sums of more general nonlocal operators. For some $l \in \mathbb{N}$ and $i=1,...,l$, consider measurable functions $D_i:\mathbb{R}^n \times \mathbb{R}^n \to \mathbb{R}$ such that
\begin{equation} \label{upb}
\sum_{i=1}^l |D_i(x,y)| \leq \Lambda \text{ for almost all } x,y \in \mathbb{R}^n.
\end{equation}
In addition, fix functions $g_i \in W^{s,2}(\mathbb{R}^n) \cap W^{t,p}(\Omega)$ and let $u \in W^{s,2}(\mathbb{R}^n) \cap W^{t,p}(\Omega)$ be a weak solution of the more general nonlocal equation $L_A u =\sum_{i=1}^l L_{D_i} g_i$ in $\Omega$, that is, assume that
\begin{align*}
	& \int_{\mathbb{R}^n} \int_{\mathbb{R}^n} \frac{A(x,y)}{|x-y|^{n+2s}} (u(x)-u(y))(\varphi(x)-\varphi(y))dydx \\
	= & \sum_{i=1}^l \int_{\mathbb{R}^n} \int_{\mathbb{R}^n}  \frac{D_i(x,y)}{|x-y|^{n+2s}}(g_i(x)-g_i(y)) (\varphi(x)-\varphi(y))dydx \quad \forall \varphi \in W^{s,2}_0(\Omega).
\end{align*}
Then the following is true. For $s,t$ and $p$ as in Theorem \ref{mainright}, 
there exists some small enough $\delta = \delta(p,n,s,t,\Lambda) > 0$ such that if $A \in \mathcal{L}_0(\Lambda)$ is $(\delta,R)$-BMO in $\Omega$ for some $R>0$, then for any relatively compact domain $\Omega^\prime \Subset \Omega$, we have the a priori estimate
\begin{equation} \label{estpymd}
	[u]_{W^{t,p}(\Omega^\prime)} \leq C \left ( [u]_{W^{s,2}(\mathbb{R}^n)} + \sum_{i=1}^l \hspace{0.3mm} [g_i]_{W^{t,p}(\Omega)} + \sum_{i=1}^l \hspace{0.3mm} [g_i]_{W^{s,2}(\mathbb{R}^n)} \right ),
\end{equation}
where $C=C(n,s,t,\Lambda,R,p,\Omega^\prime,\Omega)>0$. \par
This is true since the statement of our comparison estimate given by Proposition \ref{appplxy} remains valid for weak solutions $u$ of such equations of the form $L_A u =\sum_{i=1}^l L_{D_i} g_i$, which can be easily seen by using the bound (\ref{upb}) in the estimation of the appropriately adapted integral $I_2$ in \cite[Proposition 5.1]{MeV}, while the adaptations required to account for the summation over $i=1,...,l$ are straightforward and do not change the proofs in any conceptually significant way.
\end{rem}

\section{Proofs of the main results} \label{pmr}
We are now in the position to prove our main results.

\begin{proof}[Proof of Theorem \ref{mainint5}]
	Fix relatively compact bounded domains ${\Omega^\prime} \Subset \Omega_0 \Subset {\Omega^{\prime \prime}} \Subset \Omega$, where we assume that $\Omega_0$ is a smooth domain.
	Let $\delta=\delta(p,n,s,t,\Lambda)>0$ be given by Theorem \ref{mainright} and let $\{\psi_m\}_{m=1}^\infty$ be a sequence of standard mollifiers in $\mathbb{R}^{n}$ with the properties 
	\begin{equation} \label{molprop}
		\psi_m \in C_0^\infty(B_{1/m}), \quad \psi_m \geq 0, \quad \int_{\mathbb{R}^{n}} \psi_m(x)dx=1 \quad \text{for all } m \in \mathbb{N}.
	\end{equation}
	For any $m \in \mathbb{N}$ and $x \in \Omega_m := \left \{x \in \Omega \mid \textnormal{dist}(x, \partial \Omega) > 1/m \right \}$, we now define
	$$ f_m(x):= \int_{\Omega} f(y)\psi_m(x-y)dy.$$
	Next, observe that there exists some large enough $m_0 \in \mathbb{N}$, such that $\Omega^{\prime \prime} \subset \Omega_m$ for all $m \geq m_0$.
	Since $f \in L^\frac{np}{n+(2s-t)p}_{loc}(\Omega)$ and ${\Omega^{\prime \prime}} \Subset \Omega$, by standard properties of mollifiers we have
	\begin{equation} \label{appf}
		f_m \xrightarrow{m \to \infty} f \text{ in } L^\frac{np}{n+(2s-t)p}({\Omega^{\prime \prime}})
	\end{equation}
	and $f_m \in L^\infty({\Omega^{\prime \prime}})$ for any $m \geq m_0$. In addition, for any $m \geq m_0$, by \cite[Proposition 4.1]{MeN} there exists a unique weak solution $u_m \in W^{s,2}(\mathbb{R}^n)$ of the Dirichlet problem 
	\begin{equation} \label{constcof3k}
		\begin{cases} \normalfont
			L_{A} u_m = f_m & \text{ in } \Omega^{\prime \prime} \\
			u_m = u & \text{ a.e. in } \mathbb{R}^n \setminus \Omega^{\prime \prime}.
		\end{cases}
	\end{equation}
	Since $w_m:=u-u_m \in W_0^{s,2}(\Omega^{\prime \prime})$ is a weak solution of the equation $L_A w_m = f-f_m$ in $\Omega^{\prime \prime}$, in view of using $w_m$ itself as a test function in this equation, along with H\"older's inequality and the fractional Sobolev inequality (see \cite[Theorem 6.5]{Hitch}), we obtain
	\begin{equation} \label{basest}
	\begin{aligned}
		[w_m]_{W^{s,2}(\mathbb{R}^n)}^2 \leq & \Lambda^{-1} \int_{\mathbb{R}^n} \int_{\mathbb{R}^n} A(x,y) \frac{(w_m(x)-w_m(y))^2}{|x-y|^{n+2s}}dydx \\ = & \Lambda^{-1} \int_{\Omega^{\prime \prime}} (f-f_m)w_m dx \\ \leq & \Lambda^{-1} ||f-f_m||_{L^\frac{2n}{n+2s}(\Omega^{\prime \prime})} ||w_m||_{L^\frac{2n}{n-2s}(\mathbb{R}^n)} \\ \leq & C_1 ||f-f_m||_{L^\frac{np}{n+(2s-t)p}(\Omega^{\prime \prime})} [w_m]_{W^{s,2}(\mathbb{R}^n)},
	\end{aligned}
	\end{equation}
	where $C_{1}=C_{1}(n,s,t,p,\Lambda,\Omega^{\prime \prime})>0$, so that along with (\ref{appf}), we deduce that
		\begin{align*}
		[w_m]_{W^{s,2}(\mathbb{R}^n)} \leq C_{2} ||f-f_m||_{L^\frac{np}{n+(2s-t)p}(\Omega^{\prime \prime})} \xrightarrow{k \to \infty} 0
		\end{align*}
	and
	\begin{equation} \label{UktoU}
		\lim_{m \to \infty} [u_m]_{W^{s,2}(\mathbb{R}^n)}
		= [u]_{W^{s,2}(\mathbb{R}^n)}.
	\end{equation}
	Next, for any $m \in \mathbb{N}$ let $g_m \in W^{s,2}(\mathbb{R}^n)$ be the unique weak solution of the Dirichlet problem
	\begin{equation} \label{constcof3gk}
		\begin{cases} \normalfont
			(-\Delta)^s g_m = f_m & \text{ in } \Omega^{\prime \prime} \\
			g_m = 0 & \text{ a.e. in } \mathbb{R}^n \setminus \Omega^{\prime \prime}.
		\end{cases}
	\end{equation}
	Then by a similar reasoning as in (\ref{basest}), each function $g_m$ satisfies the estimate
	\begin{equation} \label{fkestimate}
		[g_m]_{W^{s,2}(\mathbb{R}^n)} \leq C_2 ||f_m||_{L^{\frac{np}{n+(2s-t)p}}(\Omega^{\prime \prime})},
	\end{equation}
	where $C_2=C_2(n,s,t,p,\Omega^{\prime \prime})>0$.
	In addition, by the local $H^{2s,p}$ estimates for the fractional Laplacian (see \cite[Theorem 4.4]{MeV}), we have the estimate
	\begin{equation} \label{H2spa}
		||g_m||_{H^{2s,\frac{np}{n+(2s-t)p}}(\Omega_0)} \leq C_3 ||f_m||_{L^{\frac{np}{n+(2s-t)p}}(\Omega^{\prime \prime})},
	\end{equation}
	where $C_3=C_3(n,s,t,p,\Omega_0,\Omega^{\prime \prime})>0$.
	Also, by Proposition \ref{BesselTr}, we have 
	\begin{equation} \label{embend}
		[g_m]_{W^{t,p}(\Omega_0)} \leq C_4 ||g_m||_{H^{2s,\frac{np}{n+(2s-t)p}}(\Omega_0)},
	\end{equation}
	where $C_4=C_4(n,s,t,p,\Omega_0)>0$. In view of (\ref{constcof3k}) and (\ref{constcof3gk}), $u_m$ is a weak solution of the equation
	$$ L_{A} u_m= (-\Delta)^s g_m \text{ in } \Omega^{\prime \prime}.$$
	Since $f_m \in L^\infty(\Omega^{\prime \prime})$, by \cite[Theorem 1.4]{MeV} we have $u_m \in C^{\beta}_{loc}(\Omega_0)$ for any $\beta \in (0,\min\{2s,1\})$ and thus $u_m \in W^{t,p}(\Omega_0)$. Therefore, by Theorem \ref{mainright}, (\ref{fkestimate}), (\ref{embend}) and (\ref{H2spa}), we have
	\begin{align*}
		[u_m]_{W^{t,p}(\Omega^{\prime})}
		\leq & C_5 \left ([u_m]_{W^{s,2}(\mathbb{R}^n)} + [g_m]_{W^{t,p}(\Omega_0)} + [g_m]_{W^{s,2}(\mathbb{R}^n)} \right ) \\
		\leq & C_6 \left ([u_m]_{W^{s,2}(\mathbb{R}^n)} + ||f_m||_{L^{\frac{np}{n+(2s-t)p}}(\Omega^{\prime \prime})} \right ) ,
	\end{align*}
	where all constants depend only on $n,s,t,\Lambda,p,\Omega^\prime,\Omega^{\prime \prime}$ and $\Omega_0$.
	Combining the previous display with Fatou's Lemma (which is applicable after passing to a subsequence if necessary), (\ref{UktoU}) and (\ref{appf}), we conclude that
	\begin{equation} \label{liminfest}
		\begin{aligned}
			[u]_{W^{t,p}(\Omega^{\prime})} \leq & \liminf_{m \to \infty} [u_m]_{W^{t,p}(\Omega^{\prime})} \\
			\leq & C_7 \lim_{m \to \infty} \left ([u_m]_{W^{s,2}(\mathbb{R}^n)} + ||f_m||_{L^{\frac{np}{n+(2s-t)p}}(\Omega^{\prime \prime})} \right ) \\
			= & C_{7} \left ([u]_{W^{s,2}(\mathbb{R}^n)} + ||f||_{L^{\frac{np}{n+(2s-t)p}}(\Omega^{\prime \prime})} \right ) ,
		\end{aligned}
	\end{equation}
	where $C_7=C_7(n,s,t,\Lambda,p,\Omega^{\prime},\Omega^{\prime \prime})>0$.
	This proves the estimate (\ref{Wstest}). \par 
	The assertion that $u \in L^p_{loc}(\Omega)$ now follows by a simple iteration argument for which we refer to the proof of \cite[Theorem 9.1]{MeV}, so that we conclude that $u \in W^{t,p}_{loc}(\Omega)$. This finishes the proof.
\end{proof}

\begin{proof}[Proof of Theorem \ref{mainint5z}]
The case when $t=s$ follows directly from \cite[Theorem 1.1]{MeV}. Next, fix some $p>2$, some $s<t<\min\{2s,1\}$ and consider the corresponding $\delta=\delta(p,n,s,t,\Lambda)>0$ given by Theorem \ref{mainint5}. Since $A$ is assumed to be VMO in $\Omega$, there exists some $R>0$ such that $A$ is $(\delta,R)$-BMO in $\Omega$. Thus, by Theorem \ref{mainint5} we obtain that $u \in W^{t,p}_{loc}(\Omega)$ whenever $f \in L^\frac{np}{n+(2s-t)p}_{loc}(\Omega)$, which finishes the proof.
\end{proof}

\begin{proof}[Proof of Theorem \ref{mainint5zx}]
	Fix some $t$ such that $s \leq t < \min\{2s,1\}$, some $q \in \left (\frac{2n}{n+2(2s-t)},\infty \right )$ and some $f \in L^q_{loc}(\Omega)$. First, we assume that $q<\frac{n}{2s-t}$. Then we have $n>(2s-t)q$ and set $p:=\frac{nq}{n-(2s-t)q}>2$, so that we have $q=\frac{np}{n+(2s-t)p}$ and thus $f \in L^\frac{np}{n+(2s-t)p}_{loc}(\Omega)$.
	Therefore, by Theorem \ref{mainint5z} we obtain $u \in W^{t,p}_{loc}(\Omega) = W^{t,\frac{nq}{n-(2s-t)q}}_{loc}(\Omega)$. \par 
	If on the other hand $q \geq \frac{n}{2s-t}$, then for any $p \in (2,\infty)$ we have $\frac{np}{n+(2s-t)p} \leq q$ and thus $f \in L^\frac{np}{n+(2s-t)p}_{loc}(\Omega)$, so that again by Theorem \ref{mainint5z} we obtain $u \in W^{t,p}_{loc}(\Omega)$.
	In view of Proposition \ref{Sobcont}, the conclusion that $u \in W^{t,p}_{loc}(\Omega)$ for $p \in (2,\infty)$ and for any $t$ in the range $s \leq t < \min\{2s,1\}$ also implies that $u \in W^{t,p}_{loc}(\Omega)$  for any $p \in (1,\infty)$, so that the proof is finished.
\end{proof}

\begin{proof}[Proof of Theorem \ref{higherdiff}]
Fix some $s<t<\min\{2s,1\}$ and some $t^\prime$ such that $t<t^\prime<\min\{2s,1\}$. Since $f \in L^2_{loc}(\Omega)$ and $\frac{2n}{n+2(2s+t^\prime)}<2$, Theorem \ref{mainint5zx} implies that $u \in W^{t^\prime,\frac{2n}{n-2(2s-t^\prime)}}_{loc}(\Omega)$. Since $\frac{2s}{n-2(2s-t^\prime)} >2$, by Proposition \ref{Sobcont} we arrive at $u \in W^{t,2}_{loc}(\Omega)$, so that the proof is finished.
\end{proof}

\begin{rem} \label{endremark} \normalfont
As already indicated in Remark \ref{mainrem}, our main results remain valid for another class of coefficients $A$ that in general might not be VMO in $\Omega$.
Namely, the conclusions of Theorem \ref{mainint5z}, Theorem \ref{mainint5zx} and Theorem \ref{higherdiff} remain true if instead we assume that there exists some small $\varepsilon>0$ such that
\begin{equation} \label{contkernel}
	\lim_{h \to 0} \sup_{\substack{_{x,y \in K}\\{|x-y| \leq \varepsilon}}} |A(x+h,y+h)-A(x,y)| =0 \quad \text{for any compact set } K \subset \Omega.
\end{equation}
In fact, in the present paper we only use the assumption that $A$ is VMO in $\Omega$ in order to ensure that the H\"older estimate for corresponding homogeneous equations given by (\ref{veq}) holds, which in this case is guaranteed by the results from \cite{MeV}. If instead $A$ satisfies the assumption (\ref{contkernel}), then this H\"older estimate actually follows from \cite[Theorem 1.1]{MeH} combined with \cite[Lemma 5.1]{MeV}, so that our proofs and main results remain valid under the assumption (\ref{contkernel}). \par
As mentioned, the condition (\ref{contkernel}) is for example satisfied in the case when $A \in \mathcal{L}_0(\Lambda)$ is translation invariant in $\Omega$, that is, if we have $A(x,y)=a(x-y)$ for all $x,y \in \Omega$ and some measurable function $a: \mathbb{R}^n \to \mathbb{R}$. Since in this case $A$ is otherwise not required to satisfy any additional smoothness assumption, $A$ might not be VMO in $\Omega$ but still satisfies (\ref{contkernel}).
\end{rem}


\bibliographystyle{amsplain}

\end{document}